\documentclass[11pt]{article}
\usepackage[scale=0.75]{geometry}
\usepackage[T1]{fontenc}
\usepackage[utf8]{inputenc}
\usepackage{amsmath,amssymb,amsthm,mathtools}
\usepackage[authoryear,round]{natbib}

\usepackage[colorlinks,citecolor=blue,urlcolor=blue, linkcolor = blue]{hyperref}
\usepackage{cleveref}
\usepackage{graphicx}
\usepackage{xcolor}
\usepackage{comment}
\usepackage{enumerate}

\numberwithin{equation}{section}
\swapnumbers

\newcommand{\E}{\mathbb{E}}
\newcommand{\Prob}{\mathbb{P}}
\newcommand{\R}{\mathbb{R}}

\DeclareMathOperator*{\argmax}{argmax}

\theoremstyle{plain}

\newtheorem{theorem}{Theorem}[section]
\newtheorem{lemma}[theorem]{Lemma}
\newtheorem{proposition}[theorem]{Proposition}

\theoremstyle{definition}

\newtheorem{assumption}[theorem]{Assumption}

\theoremstyle{remark}
\newtheorem{remark}[theorem]{Remark}

\title{Parameter estimation in a fully coupled partially observed Ornstein--Uhlenbeck process}

\author{
Sascha Gaudlitz\footnote{Humboldt--Universität zu Berlin \\ Email: sascha.gaudlitz<at>hu-berlin.de}
\quad \& \quad
Hasan Mert Gökalp\footnote{Technische Universität Berlin \\ Email: goekalp<at>math.tu-berlin.de}
}
\date{}
\begin{document}

\maketitle

\begin{abstract}
We study a two-dimensional Ornstein--Uhlenbeck system where only the first coordinate is observed, whereas the second coordinate remains hidden. Our goal is the estimation of the coupling parameter in the drift of the observed coordinate.
The core novelty lies in accounting for the influence of the observed component on the unobserved one, making the system fully coupled.
Using linear filtering, we derive the likelihood under partial observation and establish local asymptotic normality of the statistical model. Within the Ibragimov–Hasminskii framework (1981), we prove consistency, asymptotic normality, convergence of moments and asymptotic efficiency of the MLE under stability and identifiability assumptions as the time horizon tends to infinity. 
\end{abstract}

\noindent\textbf{MSC 2020:} Primary 62M20; secondary 62F12, 62M05.

\noindent\textbf{Keywords:} Partially observed linear model, Ornstein--Uhlenbeck process, local asymptotic normality, likelihood analysis.

\section{Introduction}\label{sec:intro}

We consider the fully coupled linear system
\begin{subequations}\label{eq:XYsystem}
  \begin{align}
    dX_t & = \bigl(a X_t + \vartheta Y_t\bigr)dt + dW_t^{X}, \label{eq:Xeq} \\
    dY_t & = \bigl(b X_t + c Y_t\bigr)dt + dW_t^{Y}, \label{eq:Yeq}
  \end{align}
\end{subequations}
for $t\in[0,T]$ with $X_0=Y_0=0$, where $W_{t}^{X}$ and $W_{t}^{Y}$ are independent Brownian motions on a filtered probability space $(\Omega, \mathcal{F}, \allowbreak (\mathcal{F}_t)_{t\ge0}, \Prob)$. The real scalars $a$, $b$ and $c$ are known, whereas $\vartheta$ shall be estimated in the large time asymptotic $T\to\infty$.
We assume that only the process $X^T= (X_t, 0\le t \le T)$ is observed, while $Y^T = (Y_t, 0\le t \le T)$ is hidden.
This partial observation scheme naturally relates our setting to the frameworks of hidden Markov processes \citep*{cappeInferenceHiddenMarkov2005} and Kalman-Bucy filtering \citep{Bain2009}, where $X^T$ is commonly viewed as a noisy observation of the underlying (hidden) dynamic $Y^T$. In these settings, the process $X^T$ typically has no effect on $Y^T$, i.e. $b=0$ in \eqref{eq:XYsystem}.
In contrast, our setting models the underlying dynamic by the fully coupled two-dimensional system $(X^T,Y^T)$ itself, while only measurements from one component $X^T$ are available to the statistician.
Partially observable fully coupled systems arise naturally in many applications, including the FitzHugh-Nagumo equations in computational neuroscience \citep{fitzhughImpulsesPhysiologicalStates1961} and the dormancy phenomenon in population biology \citep{lennonPrinciplesSeedBanks2021,blathNewCoalescentSeedbank2016a}.
While these systems often involve significant non-linearities, the idealised model \eqref{eq:XYsystem} isolates the fundamental challenge of inference in a partially observed fully coupled system.

Statistical inference in the model \eqref{eq:XYsystem} is well-studied when there is no feedback from $X^T$ to $Y^T$, such that the process $Y^T$ evolves autonomously from $X^T$ and the filter equations can be analysed decoupled from $X^T$. Results in this setting can be found in \citep{Kutoyants1984,KALLIANPUR1991284,kutoyants2004statistical,Chigansky_2008,kutoyantsParameterEstimationHidden2019} for $T\to\infty$, and in \citep{KUTOYANTS2019248,kutoyantsParameterEstimationHidden2021} for vanishing observational noise. \citet{Brouste_2010} study the setting of fractional noise. Online and recursive estimation is considered by \citet{suraceOnlineMaximumLikelihoodEstimation2019,moura1986,elliottExactFiniteDimensionalFilters1997,demboParameterEstimationPartially1986}. Discrete systems have been considered for example by \citet{papavasiliouParameterEstimationAsymptotic2006,mongilloOnlineLearningHidden2008,kurisakiParameterEstimationErgodic2023}.

Instead, we are interested in estimating $\vartheta$ when both components interact $(b\neq0)$. 
In this case, the filter equations are coupled to $X^T$ and the statistical analysis requires a careful study of the joint process composed of $X^T$ and the filters.
We utilise the ergodicity of the joint process to obtain the convergence of time averages in probability, the convergence of the covariance matrices, and exponential decay of autocovariances in Proposition \ref{prop:aou-propositions}.
With this concentration result at hand, we can prove the local asymptotic normality (LAN) of the statistical experiment in Theorem \ref{thm:lan_property}, establish that the scaled likelihood ratio process is Lipschitz continuous with respect to the Hellinger distance, and that the Hellinger affinity \citep[Chapter 4]{LeCam1986} between the measures corresponding to the true and shifted parameters decays exponentially fast.
Subsequently, we follow the general framework of \citet{IbragimovHasminskii1981} to obtain consistency, asymptotic normality, efficiency and convergence of moments for the MLE for $\vartheta$ in Theorem \ref{thm:main-mle}. Furthermore, we quantify the information loss compared to observing the full system $(X^T,Y^T)$ and find that this vanishes as the influence of $X^T$ on $Y^T$ grows, i.e. $|b|\to\infty$.
Finally, we discuss the implications of our analysis for parameter estimation in partially observed linear stochastic partial differential equations (SPDEs).

The paper is structured as follows. In Section~\ref{sec:setup} we present the general setting, filter equations, establish the ergodic properties of the joint process with the filters and derive the Fisher information. In Section~\ref{sec:main-results} we establish the LAN property and prove consistency, asymptotic normality, efficiency and convergence of moments for the MLE within the framework of \citet{IbragimovHasminskii1981}. In Section \ref{sec:Perspectives} we discuss extensions of the model to linear SPDEs which can be seen as infinite-dimensional Ornstein--Uhlenbeck processes. Technical proofs are given in Appendix~\ref{sec:tech-proofs}.

\section{Preliminaries}\label{sec:setup}
In this section, we introduce the likelihood, the filter equations and the Fisher information. We also establish the long time behaviour of the joint process composed of $X^T$ and the filters.

\subsection{Setting}

We denote by $(\Prob_{\vartheta}^T:\vartheta\in\mathbb{R})$ the family of laws induced by the observations \eqref{eq:Xeq} on the space of continuous functions $\bigl(C([0,T];\mathbb R),\mathcal B(C([0,T];\mathbb R))\bigr)$. We denote by $\xrightarrow{d}$ convergence in distribution and by $\xrightarrow{\Prob_{\vartheta}^T}$ convergence in probability. Consider an open, non-empty and bounded parameter space $\Theta\subset \mathbb{R}$ with closure $\bar{\Theta}$. Throughout, the data is generated under a true value $\vartheta_0\in\Theta$, such that $X^T\sim \Prob_{\vartheta_0}^T$. If $T$ is clear from the context, we write $\mathbb{P}_\vartheta$ instead of $\mathbb{P}_\vartheta^T$. Throughout the paper, $C$ denotes a positive finite constant, independent of $T$ and of the parameters, whose value may change from line to line.

We first employ Theorem 7.17 of \citet{LiptserShiryaev2001} to obtain an adapted representation of the stochastic differential equation (SDE) for $X^T$ in \eqref{eq:Xeq} as
\begin{equation}\label{eq:Xadapted}
  dX_t = \bigl(aX_t + \vartheta \mathbb{E}_{\vartheta}\bigl[Y_t \mid \mathcal{F}_t^X\bigr]\bigr) dt + d\bar{W}_t^{\vartheta},\quad t\in[0,T],
\end{equation} where $(\mathcal{F}_t^X)_{t\in[0,T]}$ is the filtration generated by $(X_s)_{0 \le s \le t}$ for $t\in[0,T]$, and $(\bar{W}_t^{\vartheta})_{t\in[0,T]}$ is an $(\mathcal{F}_t^X)_{t\in[0,T]}$-Brownian motion under $\Prob_\vartheta$, called the innovation process. When $\vartheta$ is clear from the context, we write $\bar W$ instead of $\bar W^\vartheta$. Since the drift in \eqref{eq:Xadapted} is $(\mathcal{F}_t^X)_{t\in[0,T]}$-adapted, we can apply Theorem 7.19 of \citet{LiptserShiryaev2001} to obtain the equivalence of the measures
$(\Prob_\vartheta^T:\vartheta\in\Theta)$. The likelihood of $\vartheta$ based on the observation $X^T$, with respect to the dominating measure $\Prob_0^T$, is given by
\begin{equation}\label{eq:likelihood_X}
  \begin{aligned}
     L_T(\vartheta)
    \coloneq \exp\Biggl(
      \vartheta\int_0^T \mathbb{E}_\vartheta\bigl[Y_t \mid \mathcal{F}_t^X\bigr]\left( dX_t - aX_t dt\right)
    - \frac{1}{2} \vartheta^2 \int_0^T
    \mathbb{E}_\vartheta\bigl[Y_t \mid \mathcal{F}_t^X\bigr]^2  dt
    \Biggr).
  \end{aligned}
\end{equation}
We define the MLE as
\[\hat{\vartheta}_T \in \argmax_{\vartheta \in \bar \Theta} L_T(\vartheta).\]
The subsequent analysis also implies that $\vartheta\mapsto\mathbb{E}_{\vartheta}\bigl[Y_t \mid \mathcal{F}_t^X\bigr]$ is almost surely continuous (see Lemma \ref{lem:m_exp_bounds}), such that a maximiser exists. Note that it might not be unique. We also introduce the likelihood ratio
$L_T(\vartheta_1;\vartheta_2)\coloneqq L_T(\vartheta_1)/L_T(\vartheta_2)$ for $\vartheta_1,\vartheta_2\in\Theta$.

\begin{assumption}\label{ass:param_space}
  The model coefficients satisfy $a+c<0$ and $b\neq 0$. Moreover, the condition
  \[
    ac - b\vartheta > 0
  \] holds for every $\vartheta\in\bar{\Theta}$.
\end{assumption}

  Assumption \ref{ass:param_space} is imposed throughout the paper and ensures that the drift matrix
  of the two-dimensional model in
  \eqref{eq:XYsystem} is Hurwitz in the sense that all of its eigenvalues have negative real parts. This assumption forces $a$ and $c$ to be
  negative if $0\in\bar\Theta$.

\subsection{The filter equations}

For the linear system  \eqref{eq:XYsystem}, the conditional distribution of $Y_t$ given $\mathcal{F}_t^X$ is normal and uniquely determined by its mean and covariance  \citep[Lemma 6.12]{Bain2009}. For $t\in[0,T]$ and $\vartheta\in\Theta$ we define the conditional mean $ m_t(\vartheta) \coloneq \mathbb{E}_\vartheta\bigl[Y_t \mid \mathcal{F}_t^X\bigr] $ and conditional variance $ \gamma_t(\vartheta) \coloneq  \mathbb{E}_\vartheta\bigl[(Y_t-m_t(\vartheta))^2\mid  \mathcal{F}_t^X\bigr]$. By Theorem 10.3 of \citet{LiptserShiryaev2001}, they satisfy
\begin{align}
  dm_t(\vartheta)
   & = \left(cm_t(\vartheta) + bX_t\right)dt
  + \gamma_t(\vartheta)\vartheta
  \left(dX_t - \bigl(\vartheta m_t(\vartheta) + aX_t\bigr)dt\right),
  \label{eq:filter-eq}                                                   \\
  \frac{d\gamma_t(\vartheta)}{dt}
   & = 2c\gamma_t(\vartheta) + 1 - \vartheta^2\gamma_t^2(\vartheta),
  \label{eq:cond_var}
\end{align}
with the initial conditions $ m_0(\vartheta) = \mathbb{E}_\vartheta[Y_0 | X_0] = 0$ and $\gamma_0(\vartheta) = \mathbb{E}_\vartheta[(Y_0-m_0)^2|X_0] = 0$.

Note that under $\Prob_\vartheta$, $dX_t-(\vartheta m_t(\vartheta)+aX_t)dt=d\bar W_t^\vartheta$ as in \eqref{eq:Xadapted} and that $(m_t(\vartheta),\gamma_t(\vartheta))$ is the true filter only under $\Prob_\vartheta$. Since the system is Gaussian, the conditional variance $\gamma_t(\vartheta)$ is deterministic. Equation \eqref{eq:cond_var} is a Riccati equation with constant coefficients and admits an explicit solution for $\vartheta \neq0$ given by
\begin{equation} \label{eq:gamma_exp}
  \gamma_t(\vartheta)
  = \frac{c+\sqrt{c^2+\vartheta^2}}{\vartheta^2}
  - \frac{2\sqrt{c^2+\vartheta^2}}
  {(c-\sqrt{c^2+\vartheta^2})^2\exp\big(2t\sqrt{c^2+\vartheta^2}\big)+\vartheta^2}.
\end{equation}
If $0\in\bar\Theta$, the corresponding solution at $\vartheta=0$ is given by $\gamma_t(0)=(\exp(2ct)-1)/(2c)$, which coincides with the continuous extension of the explicit formula in \eqref{eq:gamma_exp}.


\begin{remark}\label{rem:on_filter}~
\begin{enumerate}[(i)]
  \item In the non-interacting case $b=0$, we observe $\mathbb{E}_{\vartheta}\bigl[Y_t \mid \mathcal{F}_t^X\bigr] = - \mathbb{E}_{-\vartheta}\bigl[Y_t \mid \mathcal{F}_t^X\bigr]$ (see \eqref{eq:filter-eq}) and deduce $L_T(\vartheta) = L_T(-\vartheta)$. Therefore, $\vartheta$ is not identifiable from the observation of $X^T$. This identifiability issue disappears if we restrict the parameter space $\Theta$ to negative or positive numbers.
\item \label{num:gamma_conv}
  Note that $\gamma_t(\vartheta)$ converges uniformly and exponentially fast to $\gamma_{\infty}(\vartheta)\coloneq\frac{c+\sqrt{c^2+\vartheta^2}}{\vartheta^2}$ on the parameter space $\Theta$.
\item Our results rely on the linear structure of the model \eqref{eq:XYsystem}, which allows us to derive an explicit SDE for the conditional mean in \eqref{eq:filter-eq}. This is not possible for general nonlinear systems, where the conditional distribution of the hidden process given the observed process typically does not stay within a finite-dimensional family like the Gaussian family. Therefore, one generally cannot obtain a closed-form SDE for the conditional mean.
\end{enumerate}
\end{remark}

To analyse the asymptotic properties of the MLE, we need to compute the derivatives of $m_t$ and $\gamma_t $ with respect to $\vartheta$. The derivative $\partial_\vartheta \gamma_t(\vartheta)$ is formally obtained by differentiating the Riccati equation in \eqref{eq:cond_var} \citep[Theorem 2.11]{Teschl2012-ODE}. The mean square derivative $\partial_\vartheta m_t(\vartheta)$ of $m_t$ with respect to $\vartheta$ satisfies the linear stochastic differential equation obtained via a formal differentiation of \eqref{eq:filter-eq}. Similarly to \citet{KUTOYANTS2019248} we obtain the following result.

\begin{lemma}
  \label{lem:mean-square-diff}
  The conditional mean $\Theta\ni\vartheta\mapsto m_t(\vartheta)$ is differentiable in mean square with a quadratic remainder estimate under each $\Prob_{\vartheta_0}$, $\vartheta_0\in\Theta$, for any $t\in[0,T]$ with derivative $(\partial_\vartheta m_t(\vartheta))_{t\in[0,T]}$ satisfying the SDE
\begin{equation}\label{eq:filter_Derivative}
\begin{aligned}
d(\partial_\vartheta m_t(\vartheta))
&=
\Bigl((c-\vartheta^2\gamma_t(\vartheta))\partial_\vartheta m_t(\vartheta)
-\vartheta\gamma_t(\vartheta)m_t(\vartheta)\Bigr)dt \\
&\quad+
\Bigl(\gamma_t(\vartheta)+\vartheta\partial_\vartheta\gamma_t(\vartheta)\Bigr)
\Bigl(dX_t-(\vartheta m_t(\vartheta)+aX_t)dt\Bigr).
\end{aligned}
\end{equation}
with initial condition $\partial_\vartheta m_0(\vartheta)=0$,
in the sense that there exists a constant $C>0$ such that
\begin{equation}\label{eq:bound_rest}
    \mathbb{E}_{\vartheta_0}\left[\left(\frac{1}{h} (m_t(\vartheta+h) - m_t(\vartheta)) - \partial_\vartheta m_t(\vartheta)\right)^2\right] \le Ch^2.
  \end{equation}
  for any $\vartheta,\vartheta_0\in\Theta$, $t\in[0,T]$ and $h$ such that $\vartheta+h\in\Theta$.
\end{lemma}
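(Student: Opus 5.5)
The plan is to reduce everything to deterministic kernels acting on the observed path. Equation \eqref{eq:filter-eq} is a linear SDE in $m_t(\vartheta)$ with deterministic, $\vartheta$-dependent coefficients, driven by $X$:
\[
dm_t(\vartheta)=\bigl(c-\vartheta^2\gamma_t(\vartheta)\bigr)m_t(\vartheta)\,dt+\bigl(b-a\vartheta\gamma_t(\vartheta)\bigr)X_t\,dt+\vartheta\gamma_t(\vartheta)\,dX_t .
\]
By variation of constants it has the explicit solution
\[
m_t(\vartheta)=\int_0^t \Phi_\vartheta(t,s)\bigl(b-a\vartheta\gamma_s(\vartheta)\bigr)X_s\,ds+\int_0^t \Phi_\vartheta(t,s)\,\vartheta\gamma_s(\vartheta)\,dX_s,
\qquad
\Phi_\vartheta(t,s)=\exp\Bigl(\int_s^t (c-\vartheta^2\gamma_u(\vartheta))\,du\Bigr).
\]
So $m_t(\vartheta)=\int_0^t K^1_\vartheta(t,s)X_s\,ds+\int_0^t K^2_\vartheta(t,s)\,dX_s$, where the kernels $K^1,K^2$ are deterministic and smooth in $\vartheta$. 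All randomness sits in $X$, which does not depend on $\vartheta$. I would define $\partial_\vartheta m_t(\vartheta)$ by differentiating these kernels under the integral. A direct Itô computation then shows that this process solves the formally differentiated equation \eqref{eq:filter_Derivative} with zero initial value. That equation is linear with bounded deterministic coefficients, so its solution is unique, which identifies the derivative.

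For the remainder estimate \eqref{eq:bound_rest}, I would apply Taylor's formula with integral remainder to each kernel pointwise in $(t,s)$:
\[
K_{\vartheta+h}-K_\vartheta-hK'_\vartheta=h^2\int_0^1(1-u)K''_{\vartheta+uh}\,du .
\]
Using (stochastic) Fubini, the left-hand side of \eqref{eq:bound_rest} becomes
\[
h^2\,\mathbb{E}_{\vartheta_0}\Bigl[\Bigl(\int_0^1(1-u)\,\partial^2_\vartheta m_t(\vartheta+uh)\,du\Bigr)^2\Bigr],
\]
where $\partial^2_\vartheta m_t$ denotes the process with the second-derivative kernels. By Jensen's inequality it is then enough to show that $\sup_{t\ge0,\ \vartheta'\in\bar\Theta}\mathbb{E}_{\vartheta_0}[(\partial^2_\vartheta m_t(\vartheta'))^2]<\infty$. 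To bound such moments I would write $dX_s=(aX_s+\vartheta_0Y_s)\,ds+dW^X_s$ using the original system. For any deterministic kernel $K$, this gives
\[
\mathbb{E}\Bigl[\Bigl(\int_0^tK(t,s)\,dX_s\Bigr)^2\Bigr]\le C\Bigl(\int_0^t|K(t,s)|\,ds\Bigr)^2\sup_s\mathbb{E}\bigl[X_s^2+Y_s^2\bigr]+2\int_0^tK(t,s)^2\,ds .
\]
Under Assumption \ref{ass:param_space} the drift matrix is Hurwitz, so $\sup_{s\ge0}\mathbb{E}_{\vartheta_0}[X_s^2+Y_s^2]<\infty$ uniformly in $\vartheta_0\in\bar\Theta$.

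The main obstacle is uniformity in $t$: the constant must not depend on $T$. This requires $\sup_t\int_0^t|\partial^k_\vartheta K^i_\vartheta(t,s)|\,ds$ to be bounded uniformly in $\vartheta\in\bar\Theta$ for $k\le2$, and similarly for the squared kernels. I would proceed in two steps.

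\begin{enumerate}[(i)]
\item From the explicit formula \eqref{eq:gamma_exp}, and its continuous extension at $\vartheta=0$, I would check that $\gamma_t$, $\partial_\vartheta\gamma_t$ and $\partial^2_\vartheta\gamma_t$ are bounded uniformly in $t\ge0$ and $\vartheta\in\bar\Theta$. I would also check that they converge exponentially fast to their limits, using Remark \ref{rem:on_filter}\ref{num:gamma_conv}.
\item Since $c-\vartheta^2\gamma_\infty(\vartheta)=-\sqrt{c^2+\vartheta^2}\le-\lambda<0$ on $\bar\Theta$, I get $\Phi_\vartheta(t,s)\le Ce^{-\lambda(t-s)}$. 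Here $C$ absorbs the transient, because $\int_0^\infty|\gamma_u-\gamma_\infty|\,du$ is uniformly bounded. Differentiating $\Phi_\vartheta$ in $\vartheta$ produces factors like $\int_s^t\partial_\vartheta(\vartheta^2\gamma_u)\,du$, which are at most $C(1+t-s)$. Hence $|\partial^k_\vartheta\Phi_\vartheta(t,s)|\le C(1+t-s)^ke^{-\lambda(t-s)}$, which is integrable in $s$ uniformly in $t$.
\end{enumerate}

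Combining (i) and (ii) gives uniform $L^1$ and $L^2$ bounds on all kernels and their derivatives up to order two, hence the claimed constant $C$ in \eqref{eq:bound_rest}, independent of $t$, $\vartheta$, $\vartheta_0$ and $h$.
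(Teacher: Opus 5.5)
Your proposal is correct, but it takes a genuinely different route from the paper.

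\textbf{The paper's route.} The paper never writes $m_t(\vartheta)$ as an explicit functional of $X$. It takes the solution $\eta_t$ of the formally differentiated equation as the candidate derivative. It then derives a linear SDE for the error $\Pi_t=h^{-1}(m_t(\vartheta+h)-m_t(\vartheta))-\eta_t$. The drift of this SDE is $A_t(\vartheta)\Pi_t$ plus forcing terms:
\begin{itemize}
\item the Lagrange remainders $\tfrac h2 A_t''$, $\tfrac h2 B_t''$, $\tfrac h2 C_t''$, multiplying $m_t(\vartheta+h)$, $X_t$ and $dX_t$;
\item the term $A_t'(\vartheta)(m_t(\vartheta+h)-m_t(\vartheta))$.
\end{itemize}
The estimate is closed by variation of constants, using the moment and increment bounds of Lemma~\ref{lem:m_exp_bounds}. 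So the propagator $\exp(\int_s^tA_r(\vartheta)\,dr)$ only appears at the fixed parameter $\vartheta$ and is never differentiated. The $\vartheta$-variation is carried instead by the a priori increment bound on $m$.

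\textbf{Your route.} You put all parameter dependence into deterministic kernels acting on the $\vartheta$-independent path $X$. Taylor's formula is therefore applied only to deterministic functions. The only probabilistic input is the uniform bound on $\E_{\vartheta_0}[X_s^2+Y_s^2]$ together with the It\^o isometry.

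\textbf{What each buys.} Your argument is more self-contained:
\begin{itemize}
\item it does not need Lemma~\ref{lem:m_exp_bounds} as an input;
\item it yields the bounds of Lemmas~\ref{lem:m_exp_bounds} and~\ref{lem:partial_m_bounds} as by-products;
\item iterating it gives higher-order mean-square derivatives with little extra effort.
\end{itemize}
The price is that you must control $\partial_\vartheta^k\Phi_\vartheta(t,s)$ explicitly. Its polynomial factors $(t-s)^k$ have to be absorbed by the exponential decay, and your step (ii) does this correctly. The paper avoids this by never differentiating the propagator.

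\textbf{Two minor remarks.}
\begin{itemize}
\item You can skip the stochastic Fubini and Jensen step. Bound the Taylor remainder kernel pointwise by $\tfrac{h^2}{2}\sup_{0\le u\le 1}|\partial_\vartheta^2K^i_{\vartheta+uh}(t,s)|$ and insert this directly into your kernel inequality.
\item The remainder is evaluated at $\vartheta+uh$, so the bounds in (i)--(ii) are needed on the convex hull of $\bar\Theta$, not just on $\bar\Theta$. This is harmless: the constraint $ac-b\vartheta>0$ in Assumption~\ref{ass:param_space} is linear in $\vartheta$, so it holds on the hull. The paper's Taylor step uses the segment in the same way.
\end{itemize}
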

\noindent The \hyperref[prf:m_twice_diff]{Proof of Lemma~\ref{lem:mean-square-diff}} is given in Section \ref{prf:m_twice_diff}.

\subsection{Ergodic properties}
To study the long-time behaviour of the filter process \eqref{eq:filter-eq} and its derivative \eqref{eq:filter_Derivative}, we have to account for the feedback term $bX_t$, which couples the filter dynamics to the observed component. We therefore consider the augmented process $(X_t,Y_t,m_t(\vartheta),\partial_\vartheta m_t(\vartheta))_{t\ge 0}$. Including $Y_t$ will also allow us to use the same ergodic framework when comparing partial and full information in Section~\ref{sec:info-loss}. Under $\Prob_{\vartheta_0}$, the equations for $X_t$, $Y_t$, $m_t(\vartheta)$ and $\partial_\vartheta m_t(\vartheta)$ form a linear stochastic differential system with time-dependent coefficients. In particular, the system $(X_t,Y_t,m_t(\vartheta),\partial_\vartheta m_t(\vartheta))$ is well-defined for any $t\ge 0$. The exponentially fast convergence of $\gamma_t(\vartheta)$ and $\partial_\vartheta\gamma_t(\vartheta)$ to constants directly transfers to the time-dependent coefficients of $(X_t,Y_t,m_t(\vartheta),\partial_\vartheta m_t(\vartheta))_{t\in[0,T]}$. In combination with the Hurwitz condition provided by Assumption \ref{ass:param_space}, we next establish the asymptotic concentration for a general class of (linear) SDEs.

\begin{proposition} \label{prop:aou-propositions}
For $d,p\in\mathbb{N}$ let $\bigl(Z_t(\vartheta_0,\vartheta)\bigr)_{\vartheta_0,\vartheta\in\Theta}$ be a family of $d$-dimensional stochastic processes such that, for each $(\vartheta_0,\vartheta)\in\Theta^2\coloneq \Theta\times \Theta$, the process $Z_t(\vartheta_0,\vartheta)$ satisfies the SDE
\[
    dZ_t(\vartheta_0,\vartheta)
    = A(t,\vartheta_0,\vartheta)Z_t(\vartheta_0,\vartheta)dt
    + B(t,\vartheta_0,\vartheta)d\mathcal{W}_t,\quad t\ge 0,
\] 
$Z_0=0\in\mathbb{R}^d$, and a $p$-dimensional Wiener process $(\mathcal{W}_t)_{t\ge 0}$. The matrices $A(t,\vartheta_0,\vartheta)\in\mathbb{R}^{d\times d}$ and $B(t,\vartheta_0,\vartheta)\in\mathbb{R}^{d\times p}$ are assumed to be continuous in $(\vartheta_0,\vartheta)$ and admit continuous extensions to $\bar\Theta^{2}$. Assume that, for each $(\vartheta_0,\vartheta)\in\bar\Theta^{2}\coloneq \bar{\Theta}\times \bar{\Theta}$, the limits
  \[
    A(\vartheta_0,\vartheta)\coloneq\lim_{t\to\infty}A(t,\vartheta_0,\vartheta)\in\mathbb{R}^{d\times d},
    \qquad
    B(\vartheta_0,\vartheta)\coloneq\lim_{t\to\infty}B(t,\vartheta_0,\vartheta)\in\mathbb{R}^{d\times p}
  \]
exist and that the following conditions hold for all $\vartheta_0,\vartheta\in\bar\Theta$ (for some matrix norm $\|\cdot\|$):

\begin{enumerate}
    \item Uniform Hurwitz condition / exponential stability: There exist constants $0<M,\beta<\infty$ independent of $\vartheta_0 \text{ and }\vartheta$ such that
          \[
            \bigl\|\exp\bigl(tA(\vartheta_0,\vartheta)\bigr)\bigr\| \le Me^{-\beta t}\quad\text{for all }t\ge 0.
          \]
    \item Exponential convergence of coefficients: There exist constants $0<C,\alpha<\infty$ independent of $\vartheta_0 \text{ and }\vartheta$ such that
          \[
            \bigl\|A(t,\vartheta_0,\vartheta)-A(\vartheta_0,\vartheta)\bigr\|
            +\bigl\|B(t,\vartheta_0,\vartheta)-B(\vartheta_0,\vartheta)\bigr\|
            \le C e^{-\alpha t}\quad\text{for all }t\ge 0.
          \]
    \item Uniform boundedness of the diffusion: There exists a constant $0<\bar B<\infty$ independent of $\vartheta_0 \text{ and }\vartheta$ such that
          \[\sup_{t\ge0}\|B(t,\vartheta_0,\vartheta)\|\le \bar B.\]
\end{enumerate}
For each $(\vartheta_0,\vartheta)\in\bar\Theta^{2}$, let $\bar Z_t(\vartheta_0,\vartheta)$ denote the Ornstein--Uhlenbeck process started in zero and driven by $(\mathcal{W}_t)_{t\ge 0}$ with constant coefficient matrices $A(\vartheta_0,\vartheta)$ and $B(\vartheta_0,\vartheta)$. Denoting its invariant Gaussian law by $\mu_{\vartheta_0,\vartheta}$ we define for 
$\bar Z(\vartheta_0,\vartheta)\sim\mu_{\vartheta_0,\vartheta}$ 
\[
  \Sigma(\vartheta_0,\vartheta)
  \coloneq
  \E_{\mu_{\vartheta_0,\vartheta}}\left[
    \bar Z(\vartheta_0,\vartheta)
    \bar Z(\vartheta_0,\vartheta)^\top
  \right].
\]
Then the following statements hold. 
\begin{enumerate}
\renewcommand{\labelenumi}{\alph{enumi})}
\renewcommand{\theenumi}{\alph{enumi}}
        \item \label{item:aou-b} Define
          \[
            \Sigma(t,\vartheta_0,\vartheta)\coloneqq
            \mathbb{E}_{\vartheta_0}\big[
              Z_t(\vartheta_0,\vartheta)Z_t(\vartheta_0,\vartheta)^\top
            \big],\quad t\ge 0.
          \]
          Then there exist constants $0<C,\gamma<\infty$ such that
          \[
            \sup_{\vartheta_0,\vartheta\in \bar \Theta}
            \bigl\|\Sigma(t,\vartheta_0,\vartheta)-\Sigma(\vartheta_0,\vartheta)\bigr\|
            \le Ce^{-\gamma t}
            \qquad t\ge0.
          \]
          Moreover, the map $(\vartheta_0,\vartheta)\mapsto \Sigma(\vartheta_0,\vartheta)$ is continuous on $\bar\Theta^{2}$.

    \item \label{item:aou-c} There exists a constant $0<C<\infty$ such that
          \[
            \sup_{\vartheta_0,\vartheta\in \bar \Theta}
            \bigl\|\operatorname{Cov}_{\vartheta_0}\bigl(
              Z_t(\vartheta_0,\vartheta),Z_s(\vartheta_0,\vartheta)
            \bigr)\bigr\|
            \le
            Ce^{-\beta|t-s|}
            \qquad t,s\ge0.
          \]
    \item \label{item:aou-a} Let $h$ be a quadratic function of the form
          \[
            h(z)= z^\top \mathcal{M} z + \mathcal{K}^\top z + \mathcal{C},\quad z\in\mathbb{R}^d,
          \]
          with $\mathcal{M}\in\mathbb{R}^{d\times d}$ a symmetric matrix, $\mathcal{K}\in\mathbb{R}^d$ and $\mathcal{C}\in\mathbb{R}$.
          Then the time averages of $h\bigl(Z_t(\vartheta_0,\vartheta)\bigr)$ converge to the corresponding stationary expectation in $L^1(\Prob_{\vartheta_0})$, uniformly over $(\vartheta_0,\vartheta)\in\bar\Theta^2$, that is
          \begin{equation*}
            \lim_{T\to\infty} \sup_{\vartheta_0,\vartheta\in\bar{\Theta}} \E_{\vartheta_0}\left[\left|\frac{1}{T}\int_0^T h(Z_s(\vartheta_0,\vartheta)) ds - \E_{\mu_{\vartheta_0,\vartheta}}\bigl[h(\bar Z(\vartheta_0,\vartheta))\bigr]
            \right|\right]=0.
        \end{equation*}
  \end{enumerate}
\end{proposition}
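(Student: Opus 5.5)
The plan is to work throughout with the propagator $\Phi(t,s)=\Phi_{\vartheta_0,\vartheta}(t,s)$ of the time-dependent system $\dot x = A(t,\vartheta_0,\vartheta)x$. The first and main step is a uniform exponential stability estimate: there are constants $M',\beta'>0$, independent of $(\vartheta_0,\vartheta)$, such that $\|\Phi(t,s)\|\le M'e^{-\beta'(t-s)}$ for $t\ge s\ge0$. Write $A(t)=A+E(t)$ with $\|E(t)\|\le Ce^{-\alpha t}$. The variation-of-constants formula gives
\begin{equation*}
  \Phi(t,s)=e^{(t-s)A}+\int_s^t e^{(t-r)A}E(r)\Phi(r,s)\,dr .
\end{equation*}
Applying Gronwall to $e^{\beta(t-s)}\|\Phi(t,s)\|$ yields
\begin{equation*}
  \|\Phi(t,s)\|\le M\exp\Bigl(-\beta(t-s)+MC\int_s^\infty e^{-\alpha r}\,dr\Bigr)\le M e^{MC/\alpha}e^{-\beta(t-s)} .
\end{equation*}
So we may take $\beta'=\beta$ and $M'=Me^{MC/\alpha}$, uniformly in the parameters. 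This is where conditions 1 and 2 enter. I expect this step to be the main obstacle, since everything else reduces to it. The delicate point is that the rate $\beta$ must survive (rather than some smaller rate), which is what item \ref{item:aou-c} demands; the integrability of $e^{-\alpha r}$ is what makes this work.

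For item \ref{item:aou-b}, I would write $\Sigma(t)=\int_0^t\Phi(t,r)B(r)B(r)^\top\Phi(t,r)^\top dr$ and $\Sigma=\int_0^\infty e^{rA}BB^\top e^{rA^\top}dr$. The difference splits into three parts. The first is the tail $\int_t^\infty$ of the stationary integral, which is $O(e^{-2\beta t})$. The second comes from $B(r)-B$, which is $O(e^{-\alpha r})$, weighted by $e^{-2\beta(t-r)}$; its integral over $r\in[0,t]$ is $O(e^{-\min(\alpha,2\beta)t/2})$ up to a factor $t$, absorbed by decreasing the rate. The third is $\Phi(t,r)-e^{(t-r)A}$. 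From the perturbation formula, this is bounded by $C\int_r^t e^{-\beta(t-u)}e^{-\alpha u}e^{-\beta(u-r)}du\le Ce^{-\beta(t-r)}e^{-\alpha r}$, which is handled the same way. Condition 3 bounds all $B$ factors. Continuity of $\Sigma(\vartheta_0,\vartheta)$ follows from continuity of $A,B$ on $\bar\Theta^2$ together with dominated convergence, using the uniform bound $\|e^{rA}\|\le Me^{-\beta r}$.

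For item \ref{item:aou-c}, for $t\ge s$ we have $\operatorname{Cov}(Z_t,Z_s)=\Phi(t,s)\Sigma(s)$ since $Z_0=0$ and the mean is zero. Then $\sup_s\|\Sigma(s)\|\le M'^2\bar B^2/(2\beta)$ together with the first step gives the bound $Ce^{-\beta|t-s|}$. The case $t<s$ follows by transposition.

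For item \ref{item:aou-a}, I would decompose the error into a bias and a fluctuation term. The bias is $\frac1T\int_0^T\E[h(Z_s)]ds-\E_\mu h$. Since $\E Z_s=0=\E_\mu\bar Z$, this equals $\frac1T\int_0^T\operatorname{tr}(\mathcal M(\Sigma(s)-\Sigma))ds=O(1/T)$ uniformly, by item \ref{item:aou-b}. For the fluctuation I would use the $L^2$ bound. The variance of $\frac1T\int_0^T h(Z_s)ds$ is $\frac1{T^2}\iint\operatorname{Cov}(h(Z_s),h(Z_r))\,ds\,dr$. For jointly Gaussian centered vectors, Isserlis' formula gives
\begin{equation*}
  \operatorname{Cov}(Z_s^\top\mathcal MZ_s,Z_r^\top\mathcal MZ_r)=2\operatorname{tr}\bigl(\mathcal M\,\operatorname{Cov}(Z_s,Z_r)\,\mathcal M\,\operatorname{Cov}(Z_r,Z_s)\bigr) .
\end{equation*}
The linear and cross terms are handled likewise. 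By item \ref{item:aou-c} the integrand is bounded by $Ce^{-\beta|s-r|}$, so the variance is $O(1/T)$ uniformly over $\bar\Theta^2$. Cauchy--Schwarz then gives the $L^1$ convergence uniformly in the parameters.
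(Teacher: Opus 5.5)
Your proof is correct. The propagator bound via variation of constants and Gr\"onwall, your treatment of item (c) through $\operatorname{Cov}(Z_t,Z_s)=\Phi(t,s)\Sigma(s,\vartheta_0,\vartheta)$, and your bias/variance argument with Isserlis' formula for item (a) all coincide with the paper's proof.

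The genuine difference is in item (b). The paper couples $Z_t$ with the constant-coefficient process $\bar Z_t$ driven by the same Brownian motion and writes an SDE for $D_t=Z_t-\bar Z_t$. It then proves $\E\|D_t\|^2\le Ce^{-\lambda t}$ with a quadratic Lyapunov function built from $P(\vartheta_0,\vartheta)$ solving $A^\top P+PA=-\mathbb I$. This needs uniform spectral bounds on $P$ over $\bar\Theta^2$, obtained from continuity and compactness. Finally it controls $\Sigma(t,\cdot)-\bar\Sigma(t,\cdot)$ by Cauchy--Schwarz, which gives the rate $\min\{\lambda/2,2\beta\}$.

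You instead compare the deterministic integral representations $\int_0^t\Phi(t,r)B(r)B(r)^\top\Phi(t,r)^\top dr$ and $\int_0^\infty e^{rA}BB^\top e^{rA^\top}dr$ directly. You reuse the perturbation formula you already needed for the propagator bound, in particular $\|\Phi(t,r)-e^{(t-r)A}\|\le Ce^{-\beta(t-r)}e^{-\alpha r}$. Your route is shorter, needs no Lyapunov matrix or uniform eigenvalue bounds, and gives any rate $\gamma<\min\{\alpha,2\beta\}$. Indeed $\int_0^t e^{-2\beta(t-r)}e^{-\alpha r}dr\le t\,e^{-\min\{\alpha,2\beta\}t}$, so halving the exponent is unnecessary.

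The paper's route yields a stronger pathwise fact: $Z_t$ and $\bar Z_t$ merge exponentially fast in $L^2(\Prob_{\vartheta_0})$, which is more than item (b) requires.
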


\noindent The proof of Proposition \hyperref[prf:aou-prop]{\ref{prop:aou-propositions}} can be found in Section \ref{prf:aou-prop}.

\begin{remark}\label{rem:stable-process}
  For each $\vartheta_0\in\Theta$, define
$Z_t(\vartheta_0)\coloneq\bigl(X_t,Y_t,m_t(\vartheta_0),\partial_\vartheta m_t(\vartheta_0)\bigr)^\top$, which satisfies the linear SDE
  \begin{equation}\label{eq:full_proc}
    dZ_t(\vartheta_0)=A(t,\vartheta_0)Z_t(\vartheta_0)dt+B(t,\vartheta_0)d\mathcal{W}_t,
    \qquad
    d\mathcal{W}_t=\begin{pmatrix}dW_t^X\\ dW_t^Y\end{pmatrix},
  \end{equation}
where
\begin{align*}
\setlength{\arraycolsep}{2.5pt}
A(t,\vartheta_0)&=
\begin{pmatrix}
  a & \vartheta_0 & 0 & 0 \\
  b & c & 0 & 0 \\
  b & \vartheta_0^2\gamma_t(\vartheta_0) & c-\vartheta_0^2\gamma_t(\vartheta_0) & 0 \\
  0 & \vartheta_0\gamma_t(\vartheta_0)+\vartheta_0^2\partial_\vartheta\gamma_t(\vartheta_0)
    & -2\vartheta_0\gamma_t(\vartheta_0)-\vartheta_0^2\partial_\vartheta\gamma_t(\vartheta_0)
    & c-\vartheta_0^2\gamma_t(\vartheta_0)
\end{pmatrix},\\
B(t,\vartheta_0)&=
\begin{pmatrix}
  1 & 0 \\
  0 & 1 \\
  \vartheta_0\gamma_t(\vartheta_0) & 0 \\
  \gamma_t(\vartheta_0)+\vartheta_0\partial_\vartheta\gamma_t(\vartheta_0) & 0
\end{pmatrix}.
\end{align*}
  The family $\bigl(Z_t(\vartheta_0)\bigr)_{\vartheta_0\in\bar\Theta}$ satisfies the assumptions of Proposition \ref{prop:aou-propositions} since the limiting drift matrix is Hurwitz under Assumption \ref{ass:param_space} and the time-varying coefficients converge exponentially fast and uniformly by Remark \ref{rem:on_filter} \eqref{num:gamma_conv}. Hence, with $A(\vartheta_0)\coloneq\lim_{t\to\infty}A(t,\vartheta_0)$ and $B(\vartheta_0)\coloneq\lim_{t\to\infty}B(t,\vartheta_0)$, the limiting covariance matrix is given by
\begin{equation}\label{eq:Limiting_Covariance}
    \Sigma(\vartheta_0)
    =\int_0^\infty e^{A(\vartheta_0)s}B(\vartheta_0)B(\vartheta_0)^\top e^{A(\vartheta_0)^\top s}ds.
\end{equation}
Equivalently, $\Sigma(\vartheta_0)$ is the unique positive semidefinite solution of
\begin{equation}\label{eq:lyapunov-covariance}
  A(\vartheta_0)\Sigma(\vartheta_0)
  +\Sigma(\vartheta_0)A(\vartheta_0)^\top
  +B(\vartheta_0)B(\vartheta_0)^\top=0.
\end{equation}
  Likewise, define $\widetilde Z_t(\vartheta_0,\vartheta)\coloneq\bigl(X_t,m_t(\vartheta_0), m_t(\vartheta)\bigr)$. Then $\bigl(\widetilde Z_t(\vartheta_0,\vartheta)\bigr)_{\vartheta_0,\vartheta\in\Theta}$ satisfies the assumptions of Proposition \ref{prop:aou-propositions} by Assumption \ref{ass:param_space} and Remark \ref{rem:on_filter} \eqref{num:gamma_conv}.
\end{remark}
\begin{remark}
Asymptotic properties of filters are well established for the non-correlated case, see e.g. \citep{budhirajaAsymptoticStabilityErgodicity2003,bishop2017}. Proposition \ref{prop:aou-propositions} complements general concentration results for diffusions, e.g. by \citet*{kutoyants2004statistical,dalalyanAsymptoticStatisticalEquivalence2007,trottnerConcentrationAnalysisMultivariate2023,aeckerle-willemsConcentrationScalarErgodic2021}. We use the Gaussian structure for a direct proof including the form of the limiting invariant measure, and also obtain the exponential decay of the autocovariances.
\end{remark}

\subsection{The Fisher information}\label{sec:info-loss}

Having established the concentration of the joint process $\bigl(X_t,m_t(\vartheta_0),\partial_\vartheta m_t(\vartheta_0)\bigr)$, we proceed with deriving the score and the Fisher information of the statistical model. The score at $\vartheta_0$ is defined as the (mean square) derivative of the log-likelihood with respect to the parameter from Lemma \ref{lem:score-derivation} evaluated at $\vartheta_0$.
\begin{lemma}\label{lem:score-derivation}
  For any $\vartheta_0\in\Theta$ the log-likelihood $\vartheta\mapsto\ell_T(\vartheta)$ is differentiable in mean square at $\vartheta_0$ with respect to $\Prob_{\vartheta_0}$, that is
  there exists $S_T(\vartheta_0)\in L^2(\Prob_{\vartheta_0})$ (the score) such that
  \[
    \E_{\vartheta_0}\left[\left(
      \frac{\ell_T(\vartheta_0+h)-\ell_T(\vartheta_0)}{h} - S_T(\vartheta_0)
      \right)^2\right]\xrightarrow[h\to 0]{}0.
  \]
  Moreover, the mean square derivative is given by
  \begin{align*}
    S_T(\vartheta_0)
     & = \int_0^T \Bigl(m_t(\vartheta_0)+\vartheta_0\partial_\vartheta m_t(\vartheta_0) \Bigr)dX_t  - \int_0^T \Bigl(\vartheta_0 m_t(\vartheta_0)^2+\vartheta_0^2 m_t(\vartheta_0)\partial_\vartheta m_t(\vartheta_0) \Bigr)dt \\
     & \quad - \int_0^T \Bigl(aX_t m_t(\vartheta_0)+a\vartheta_0 X_t\partial_\vartheta m_t(\vartheta_0) \Bigr)dt.
  \end{align*}
\end{lemma}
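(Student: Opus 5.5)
The plan is to write $\ell_T(\vartheta)=\log L_T(\vartheta)=\int_0^T \vartheta m_t(\vartheta)(dX_t-aX_tdt)-\tfrac12\int_0^T\vartheta^2 m_t(\vartheta)^2dt$ and differentiate the integrands, not the integrals. Put $g_t(\vartheta)\coloneq \vartheta m_t(\vartheta)$. Then $\ell_T(\vartheta)=\int_0^T g_t(\vartheta)(dX_t-aX_tdt)-\frac12\int_0^T g_t(\vartheta)^2dt$. The candidate derivative of $g_t$ is $\dot g_t\coloneq m_t(\vartheta_0)+\vartheta_0\partial_\vartheta m_t(\vartheta_0)$. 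Expanding $\int_0^T \dot g_t(dX_t-aX_tdt)-\int_0^T g_t(\vartheta_0)\dot g_t\,dt$ gives exactly the stated $S_T(\vartheta_0)$, so it remains to prove convergence in $L^2(\Prob_{\vartheta_0})$.

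First, write $R_t(h)\coloneq h^{-1}(g_t(\vartheta_0+h)-g_t(\vartheta_0))-\dot g_t$. From
\[
h^{-1}\bigl(g_t(\vartheta_0+h)-g_t(\vartheta_0)\bigr)=m_t(\vartheta_0+h)+\vartheta_0\,h^{-1}\bigl(m_t(\vartheta_0+h)-m_t(\vartheta_0)\bigr),
\]
Lemma~\ref{lem:mean-square-diff} gives $\E_{\vartheta_0}[R_t(h)^2]\le C h^2$ uniformly in $t\in[0,T]$. This needs $\E_{\vartheta_0}[(m_t(\vartheta_0+h)-m_t(\vartheta_0))^2]\le Ch^2$, which itself follows from \eqref{eq:bound_rest} together with $\sup_{t\le T}\E_{\vartheta_0}[(\partial_\vartheta m_t)^2]<\infty$. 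That bound holds because the system $(X,Y,m,\partial_\vartheta m)$ is a linear SDE with bounded coefficients on $[0,T]$ (Remark~\ref{rem:stable-process}).

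Second, under $\Prob_{\vartheta_0}$ substitute $dX_t-aX_tdt=\vartheta_0 m_t(\vartheta_0)dt+d\bar W_t=g_t(\vartheta_0)dt+d\bar W_t$, with $\bar W=\bar W^{\vartheta_0}$. The difference quotient minus $S_T(\vartheta_0)$ then becomes
\[
\int_0^T R_t(h)\,d\bar W_t+\int_0^T R_t(h)g_t(\vartheta_0)\,dt-\frac{1}{2h}\int_0^T\bigl(g_t(\vartheta_0+h)^2-g_t(\vartheta_0)^2\bigr)dt+\int_0^T g_t(\vartheta_0)\dot g_t\,dt .
\]
Expand $g_t(\vartheta_0+h)=g_t(\vartheta_0)+h\dot g_t+hR_t(h)$. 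The drift terms collapse to $-\frac h2\int_0^T(\dot g_t+R_t(h))^2dt$. The cross terms involving $g_t(\vartheta_0)R_t(h)$ cancel exactly against $\int R_t g_t\,dt$, and the linear terms cancel with $\int g_t\dot g_t\,dt$.

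Third, bound these pieces. The stochastic integral is controlled by It\^o's isometry: $\E_{\vartheta_0}[(\int_0^T R_t\,d\bar W_t)^2]=\int_0^T\E_{\vartheta_0}[R_t^2]dt\le CTh^2$. Here the integrand $R_t$ is $\mathcal F^X_t$-adapted, since the filters $m_t(\vartheta)$ are functionals of $X$. The remaining term is bounded via Minkowski's inequality:
\[
\E_{\vartheta_0}\Bigl[\Bigl(h\int_0^T(\dot g_t+R_t)^2dt\Bigr)^2\Bigr]\le h^2T\int_0^T \E_{\vartheta_0}\bigl[(\dot g_t+R_t)^4\bigr]dt .
\]
For this I need fourth moments, which is the step I expect to be the main obstacle. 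Lemma~\ref{lem:mean-square-diff} is only an $L^2$ statement, but everything is Gaussian. For fixed $h$, the processes $m_t(\vartheta_0+h)$, $m_t(\vartheta_0)$ and $\partial_\vartheta m_t(\vartheta_0)$ are linear functionals of the Gaussian process $X$ under $\Prob_{\vartheta_0}$, since they solve linear SDEs driven by $X$. Hence $R_t(h)$ and $\dot g_t$ are centred Gaussian, and $\E[\xi^4]=3(\E[\xi^2])^2$ upgrades the $L^2$ bounds to $L^4$ bounds uniformly in $t\le T$ and small $h$. The whole remainder therefore has second moment at most $C(T)h^2\to0$. Finally, $S_T(\vartheta_0)\in L^2(\Prob_{\vartheta_0})$ by the same Gaussian moment bounds.
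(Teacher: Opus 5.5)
Your proof is correct and follows the paper's argument essentially step for step. The substitution $dX_t-aX_t\,dt=g_t(\vartheta_0)\,dt+d\bar W_t$ reduces the error to $\int_0^T R_t(h)\,d\bar W_t-\frac{h}{2}\int_0^T(\dot g_t+R_t(h))^2dt$, and this is controlled by It\^o's isometry and by a Cauchy--Schwarz bound in time that needs fourth moments. Your explicit Gaussian step $\E[\xi^4]=3(\E[\xi^2])^2$ is what justifies the fourth-moment estimate in \eqref{eq:g-bounds}, which the paper asserts while citing only $L^2$ lemmas; also, the inequality you attribute to Minkowski is really Cauchy--Schwarz in $t$.
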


\noindent The proof of Lemma \hyperref[prf:score-derivation]{\ref{lem:score-derivation}} can be found in Appendix \ref{prf:score-derivation}.

Using \eqref{eq:Xadapted}, it follows that under $\mathbb{P}_{\vartheta_0}$ the score admits the martingale representation
\[
  S_T(\vartheta_0)=\int_0^T\bigl(m_t(\vartheta_0)+\vartheta_0\partial_\vartheta m_t(\vartheta_0)\bigr)d\bar W_t.
\]
The Fisher information at $\vartheta_0$ is defined as the variance of the score under $\Prob_{\vartheta_0}^T$:
\begin{equation}
  \label{eq:fisher}
    I_{X,T}(\vartheta_0)\coloneqq I_T(\vartheta_0)
     \coloneqq \mathbb{E}_{\vartheta_0}\left[\int_0^T \bigl(m_t(\vartheta_0)+\vartheta_0 \partial_\vartheta m_t(\vartheta_0)\bigr)^2 dt\right],\quad \vartheta_0\in\bar{\Theta}, T>0.
\end{equation}

\begin{proposition}\label{prop:uniform-fisher}
With $v(\vartheta_0)=(0,0,1,\vartheta_0)^\top$ and $\Sigma(\vartheta_0)$ from \eqref{eq:Limiting_Covariance} define the limiting Fisher information 
\begin{equation}\label{eq:fisher-info-rate}
    I(\vartheta_0)\coloneqq v(\vartheta_0)^\top \Sigma(\vartheta_0)v(\vartheta_0)\in (0,\infty),\quad \vartheta_0\in \bar{\Theta}.
\end{equation}
Then for any compact subset $K\subset\Theta$, $0<\inf_{\vartheta_0\in K}I(\vartheta_0)\le\sup_{\vartheta_0\in K}I(\vartheta_0)<\infty$, and
\begin{equation*}
    \sup_{\vartheta_0\in K} \left|\frac{1}{T}I_T(\vartheta_0) - I(\vartheta_0)\right|\to 0,\quad \text{as } T\to\infty.
\end{equation*}
\end{proposition}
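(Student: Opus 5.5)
The plan has three parts: show the normalised Fisher information converges uniformly to $I(\vartheta_0)$ via Proposition~\ref{prop:aou-propositions}, get finiteness and continuity from the same result, and then prove strict positivity, which is the real difficulty.

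\emph{Uniform convergence.} Write the integrand in \eqref{eq:fisher} as a quadratic form in $Z_t(\vartheta_0)$ from Remark~\ref{rem:stable-process}:
\[
\bigl(m_t(\vartheta_0)+\vartheta_0\partial_\vartheta m_t(\vartheta_0)\bigr)^2 = v(\vartheta_0)^\top Z_t(\vartheta_0)Z_t(\vartheta_0)^\top v(\vartheta_0).
\]
Taking expectations and using Fubini gives
\[
\frac1T I_T(\vartheta_0)=\frac1T\int_0^T v(\vartheta_0)^\top\Sigma(t,\vartheta_0)v(\vartheta_0)\,dt.
\]
By Remark~\ref{rem:stable-process}, the family $Z_t(\vartheta_0)$ satisfies the hypotheses of Proposition~\ref{prop:aou-propositions}, with the second parameter dummy. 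Part~\ref{item:aou-b} then gives $\|\Sigma(t,\vartheta_0)-\Sigma(\vartheta_0)\|\le Ce^{-\gamma t}$ uniformly on $\bar\Theta$. Since $|v(\vartheta_0)|^2=1+\vartheta_0^2$ is bounded on the bounded set $\bar\Theta$,
\[
\sup_{\vartheta_0}\Bigl|\tfrac1T I_T(\vartheta_0)-I(\vartheta_0)\Bigr|\le \frac{C}{T}\int_0^T e^{-\gamma t}\,dt\le \frac{C}{\gamma T}\to0.
\]
This holds even over all of $\bar\Theta$, not just compacts $K$.

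\emph{Finiteness and continuity.} Continuity of $\Sigma$ (again Proposition~\ref{prop:aou-propositions}\ref{item:aou-b}) and of $v$ makes $I$ continuous on $\bar\Theta$. Hence $I$ is finite and bounded above on $K$. It then suffices to show $I(\vartheta_0)>0$ for each $\vartheta_0\in\bar\Theta$: a continuous positive function attains a positive minimum on a compact set.

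\emph{Positivity (main obstacle).} The quantity $I(\vartheta_0)$ is the stationary variance of $U_t\coloneq m_t+\vartheta_0\partial_\vartheta m_t$ for the limiting constant-coefficient OU process $\bar Z$. Since
\[
I(\vartheta_0)=\int_0^\infty \bigl|v^\top e^{As}B\bigr|^2\,ds
\]
and $s\mapsto v^\top e^{As}B$ is analytic, $I(\vartheta_0)=0$ would force $v^\top A^kB=0$ for all $k\ge0$. So it suffices to find some $k$ with $v^\top A^kB\neq0$, writing $A=A(\vartheta_0)$, $B=B(\vartheta_0)$, $\gamma_\infty=\gamma_\infty(\vartheta_0)$, $\gamma_\infty'=\partial_\vartheta\gamma_\infty(\vartheta_0)$.

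For $k=0$, only the first column of $B$ contributes:
\[
v^\top B = \bigl(\vartheta_0\gamma_\infty+\vartheta_0(\gamma_\infty+\vartheta_0\gamma_\infty'),\,0\bigr) = \bigl(2\vartheta_0\gamma_\infty+\vartheta_0^2\gamma_\infty',\,0\bigr).
\]
This vanishes at $\vartheta_0=0$, and possibly elsewhere, so higher $k$ are needed. I would compute $v^\top A$: its first entry is $b$, coming from the $bX$ term in the $m$-equation, and its remaining entries involve $c$, $\gamma_\infty$ and $\gamma_\infty'$. Then $v^\top AB$ has second component $v^\top A e_2$, since the second column of $B$ is $e_2$, which is the hidden noise $W^Y$ entering through $Y$.

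Concretely, I would argue that the noise $W^Y$ excites $Y$, $Y$ feeds into $X$ through $\vartheta_0$, and $X$ feeds into $m$ through $b\neq0$. So some entry of $v^\top A^k e_2$ (for $k\le 3$) is a nonzero multiple of $b$, or of $b\vartheta_0$. The degenerate case $\vartheta_0=0$ needs separate handling: there $v=(0,0,1,0)$ and $m$ is driven by $bX$ with $X$ a nondegenerate OU process. Hence $v^\top AB=(b,0)\neq0$ directly, since $B$ has first row $(1,0)$.

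For $\vartheta_0\neq0$, verifying that the relevant entries do not cancel for all $k$ is the delicate algebraic step. If the controllability computation becomes messy, a fallback is a variance argument. Since $I(\vartheta_0)$ is also the normalised variance of the score, $I(\vartheta_0)=0$ would make the stationary innovation-driven integrand vanish identically. Differentiating $U_t\equiv0$ in $t$ then gives $0=dU_t$, which contains a $d\bar W$ term with coefficient $2\vartheta_0\gamma_\infty+\vartheta_0^2\gamma_\infty'$ and a drift term containing $bX_t$; both must vanish. I would derive the contradiction from the nondegeneracy of $X$ together with $b\neq0$. I expect this positivity step, and ruling out cancellations for every $\vartheta_0\in\bar\Theta$, to be the main obstacle.
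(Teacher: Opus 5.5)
\textbf{Verdict: genuine gap.} Your argument is complete except for positivity at $\vartheta_0\neq 0$, which needs only a one-line identity.

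\textbf{What matches the paper.} Your convergence and continuity steps are the paper's argument. The paper uses the three-dimensional process $(X_t,m_t(\vartheta_0),\partial_\vartheta m_t(\vartheta_0))$ and a Ces\`aro argument; your four-dimensional version, with an explicit $O(1/T)$ rate, is equivalent. Your reduction of positivity to finding some $k$ with $v^\top A^kB\neq 0$ is correct. So is the computation $v^\top AB=(b,0)$ at $\vartheta_0=0$, and it is tighter than the paper's informal remark that $m(0)$ is not almost surely constant.

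\textbf{The gap.} You leave positivity for $\vartheta_0\neq 0$ as an unresolved ``delicate algebraic step''. Neither the heuristic about entries of $v^\top A^k e_2$ for $k\le 3$ nor the fallback is carried out. The fallback's drift step also does not work as stated. The drift of $U_t$ is $bX_t$ plus a linear combination of $Y_t$, $m_t$ and $\partial_\vartheta m_t$. Nondegeneracy of $X$ alone does not exclude a cancellation in that combination, because the joint stationary law of $Z$ can be degenerate. For example, at $\vartheta_0=0$ with $a=c$ one has $\partial_\vartheta m=\gamma_\infty(0)X$ in stationarity.

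\textbf{The missing observation.} Your own $k=0$ computation already settles every $\vartheta_0\neq 0$. The vector $v^\top B$ vanishes only at $\vartheta_0=0$, not ``possibly elsewhere''. Indeed,
\[
2\vartheta_0\gamma_\infty(\vartheta_0)+\vartheta_0^2\gamma_\infty'(\vartheta_0)
=\partial_\vartheta\bigl(\vartheta^2\gamma_\infty(\vartheta)\bigr)\big|_{\vartheta=\vartheta_0},
\qquad
\vartheta^2\gamma_\infty(\vartheta)=c+\sqrt{c^2+\vartheta^2}.
\]
Hence $v^\top B=\bigl(\vartheta_0/\sqrt{c^2+\vartheta_0^2},\,0\bigr)$ and $\|B^\top v\|^2=\vartheta_0^2/(c^2+\vartheta_0^2)>0$, which is exactly the paper's computation. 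This step uses $\lim_{t\to\infty}\partial_\vartheta\gamma_t=\partial_\vartheta\gamma_\infty$, which follows from the stationary form of the ODE for $\partial_\vartheta\gamma_t$ (or from the explicit formula for $\gamma_t$).

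Combined with your $k=1$ argument at $\vartheta_0=0$, this gives $I>0$ on all of $\bar\Theta$, with no higher-order cancellation analysis needed. Note also that away from $\vartheta_0=0$ positivity does not come from $b$ at all. The nonvanishing entries, $v^\top Be_1$ and $v^\top Ae_2=\vartheta_0^2/\sqrt{c^2+\vartheta_0^2}$, are independent of $b$.
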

\noindent The proof of Proposition \hyperref[prf:uniform-fisher]{\ref{prop:uniform-fisher}} can be found in Appendix \ref{prf:uniform-fisher}.

We can quantify the information loss due to observing only $X^T$ instead of the full process $(X^T,Y^T)$. Let $\Sigma(\vartheta_0)$ denote the stationary covariance matrix corresponding to $Z_t(\vartheta_0)=\bigl(X_t,Y_t,m_t(\vartheta_0),\partial_\vartheta m_t(\vartheta_0)\bigr)^\top$, $t\ge 0$, from \eqref{eq:lyapunov-covariance}. Let $I_{X,T}(\vartheta_0)$ and $I_{X,Y,T}(\vartheta_0)$ denote the Fisher information when observing only $X^T$ and when observing both $X^T$ and $Y^T$, respectively, (see \eqref{eq:fisher-info-rate} and Appendix \ref{sec:information_ratio_conv}). Let $S_{X,Y,T}(\vartheta)\coloneq\partial_\vartheta \log L_{T,X,Y}(\vartheta)$ denote the full-data score. Under the assumption that differentiation and conditional expectation can be interchanged, the law of total variance yields
\[
  I_{X,Y,T}(\vartheta)
  =
  I_{X,T}(\vartheta)
  +
  \mathbb E_\vartheta\Bigl[
    \operatorname{Var}_\vartheta\bigl(S_{X,Y,T}(\vartheta)\mid\mathcal F_T^X\bigr)
  \Bigr],
\]
so that the difference between full and partial information is given by the conditional variance term. Denoting the limiting Fisher information rates by $I_X(\vartheta_0)\coloneq\lim_{T\to\infty}\frac{1}{T}I_{X,T}(\vartheta_0)$ and $I_{X,Y}(\vartheta_0)\coloneq\lim_{T\to\infty}\frac{1}{T}I_{X,Y,T}(\vartheta_0)$, we define the information ratio
\[
  R(\vartheta_0)\coloneq\frac{I_X(\vartheta_0)}{I_{X,Y}(\vartheta_0)},
\]
where small values of $R(\vartheta_0)$ correspond to a substantial information loss, whereas values of $R(\vartheta_0)$ close to one indicate only minor information loss.
We find the representations
\begin{equation}\label{eq:information_repr}
I_X(\vartheta_0) = \bigl[\Sigma(\vartheta_0)\bigr]_{3,3}
+2\vartheta_0\bigl[\Sigma(\vartheta_0)\bigr]_{3,4}
+\vartheta_0^{2}\bigl[\Sigma(\vartheta_0)\bigr]_{4,4}, \quad I_{X,Y}(\vartheta_0) = \bigl[\Sigma(\vartheta_0)\bigr]_{2,2},
\end{equation}
where the subscripts indicate the respective matrix entries.
Solving the Lyapunov equation in \eqref{eq:lyapunov-covariance} for the stationary covariance matrix yields explicit expressions for $I_X(\vartheta_0)$ and $I_{X,Y}(\vartheta_0)$ and we derive in Appendix \ref{sec:information_ratio_conv} that
\[
  R(\vartheta_0)\xrightarrow{|b|\to\infty}1,\qquad
  R(\vartheta_0)\xrightarrow{|\vartheta_0|\to\infty}0,\qquad
  R(\vartheta_0)\xrightarrow{c\to-\infty}0,\qquad
  R(\vartheta_0)\xrightarrow{a\to-\infty}\rho,
\]
where $\rho\in(0,1)$. In each limit, all other parameters are fixed.

Intuitively, as the influence of $X^T$ on $Y^T$ grows ($|b|\to\infty$), the latter is dominated by the term $bX_tdt$. Since $X^T$ and $b$ are known, observing $Y$ adds vanishing additional Fisher information and the information ratio converges to one. On the other hand, as $|\vartheta_0|\to\infty$, the drift of $X^T$ is dominated by $\vartheta_0 Y_t$ and changes in $\vartheta_0$ can be offset by opposing changes in $Y^T$ when only $X^T$ is observed. The information ratio thus converges to zero. 

In Figure \ref{fig:ratio-heatmap} (left) we compare the mean absolute errors of the MLE with full versus with partial information using Monte-Carlo estimates with $1000$ runs. The simulation is done using a Euler--Maruyama scheme with time stepping $\Delta t = 10^{-2}$. We can see that both estimators empirically achieve the theoretical convergence rate of $T^{-1/2}$. As expected, the MLE using full information outperforms the one using partial information. When $|b|$ becomes larger, its advantage decreases, aligning well with the previous analysis.

A heatmap of $R$ as a function of $b$ and $\vartheta_0$ is displayed in Figure \ref{fig:ratio-heatmap} (right) and shows that $R(\vartheta_0)$ approaches one as $|b|\to\infty$ and that $R(\vartheta_0)$ approaches zero as $|\vartheta_0|\to\infty$ with the remaining parameters fixed, matching the limits obtained analytically. 

\begin{figure}[t]
  \centering
  \begin{minipage}[c]{0.49\linewidth}
    \centering
    \includegraphics[width=\linewidth]{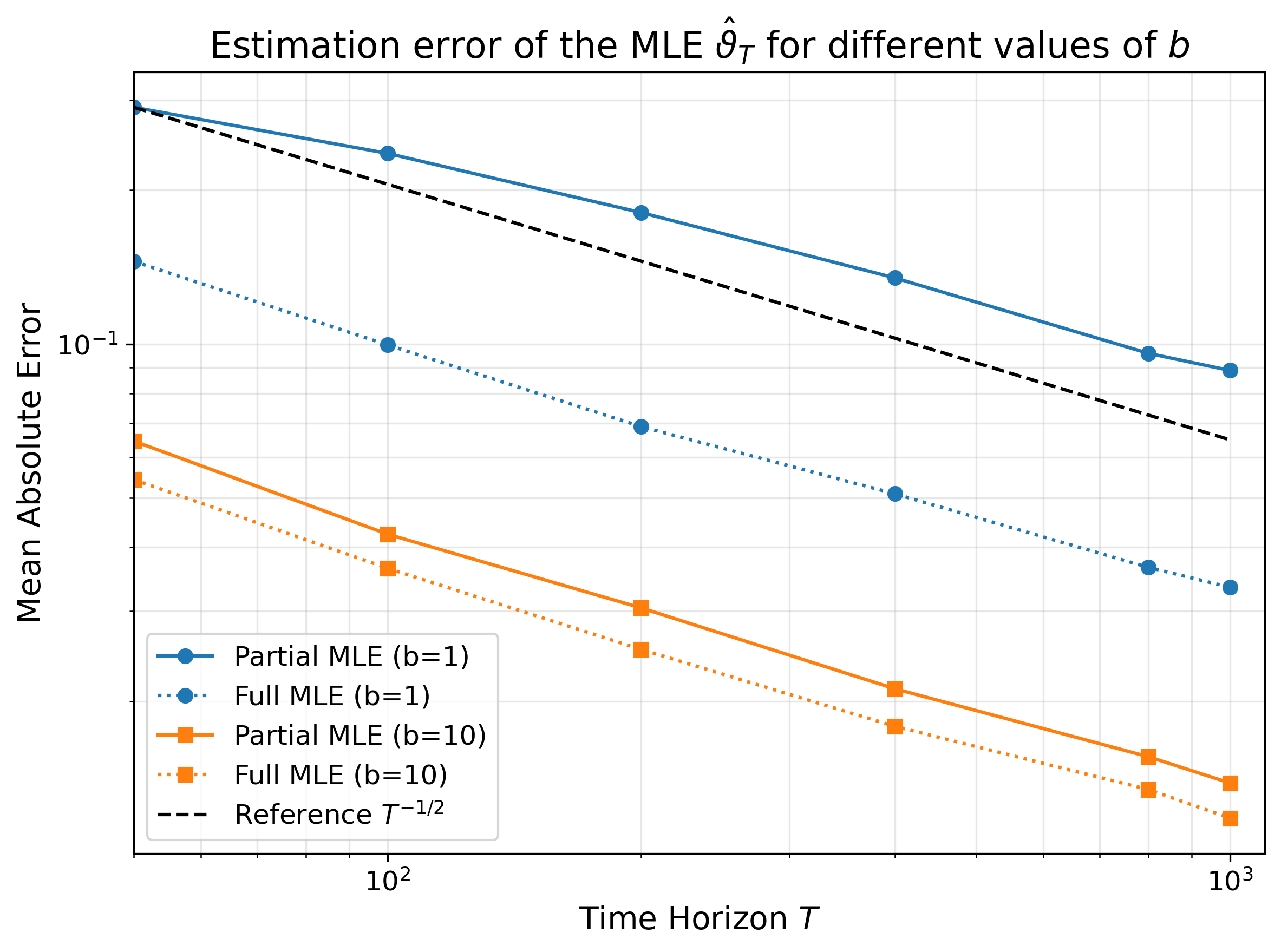}
  \end{minipage}\hfill
  \begin{minipage}[c]{0.49\linewidth}
    \centering
    \includegraphics[width=\linewidth]{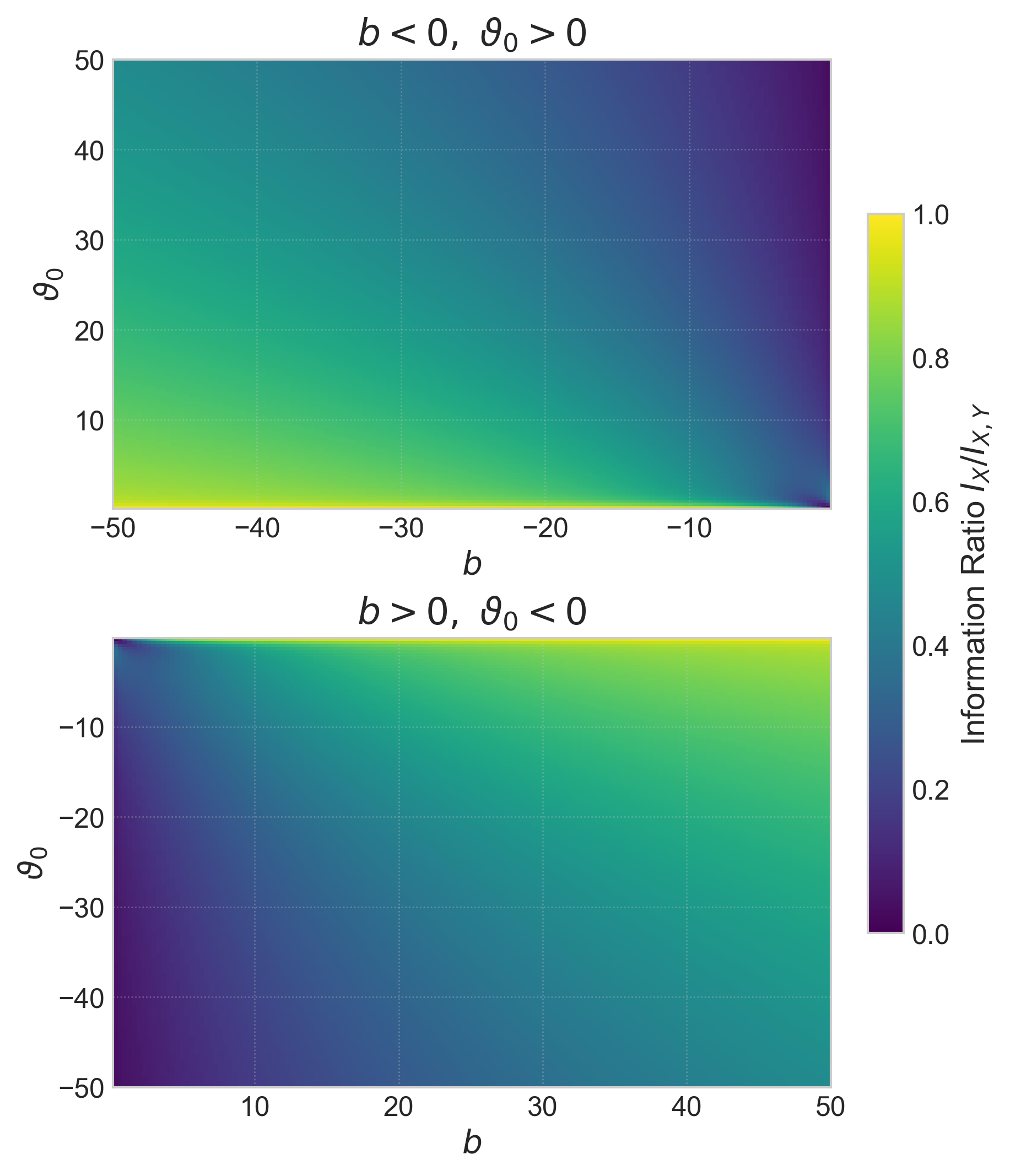}
  \end{minipage}
  \caption{Left: Comparison of the Monte-Carlo estimates of the mean absolute errors of the MLE given full versus partial information. Parameter set-up: $a=-1$, $c=-1$, $\vartheta_0=-0.5$ and $b\in\{1,10\}$ and $1000$ Monte-Carlo runs. Right: Information ratio $I_X/I_{X,Y}$ with $a=-1$, $c=-1$. }
  \label{fig:ratio-heatmap}
\end{figure}

\section{Main Results}\label{sec:main-results}
Having understood the long-time behaviour of the filters and the Fisher information, we proceed to derive the asymptotic behaviour of the MLE in our model. To this end, we first establish the LAN property for the statistical model and subsequently apply the likelihood ratio process framework of \citet{IbragimovHasminskii1981} to derive statistical guarantees for the MLE.
For $\vartheta_0 \in \Theta$ we consider the localised likelihood ratios given by
\begin{equation*}
  L_{T,\vartheta_0}(u) \coloneqq \frac{d\Prob_{\vartheta_0 + u I_T(\vartheta_0)^{-1/2}}}{d\Prob_{\vartheta_0}}(X^{T}),\quad u\in U_{T,\vartheta_0} \coloneq I_T(\vartheta_0)^{1/2}(\Theta-\vartheta_0).
\end{equation*}
We view these as elements from the space $C_0(\R)$ of continuous functions $f\colon\mathbb{R}\to\mathbb{R}$ vanishing at infinity, equipped with the supremum norm $\|f\|_\infty \coloneqq \sup_{u \in \R} |f(u)|$ by extending the domain from $U_{T,\vartheta_0}$ to $\R$ in such a way that their supremum-norms do not change.

\begin{theorem}
  \label{thm:lan_property}
  Let K be a compact subset of $\Theta$ and let $u\in U_{T,\vartheta_0}$ fixed. The log-likelihood ratio comparing the model with parameter $\vartheta_u=\vartheta_0+uI_T(\vartheta_0)^{-1/2}$ to the model with parameter $\vartheta_0$ can be expressed as
  \[
    \log L_{T,\vartheta_0}(u)
    = u\Delta_{T,\vartheta_0}-\tfrac12 u^2+\psi_T(u,\vartheta_0),
  \]
  where, uniformly in $\vartheta_0\in K$, $\Delta_{T,\vartheta_0} \xrightarrow[]d N(0,1)$ and $\psi_T(u,\vartheta_0)\xrightarrow{\Prob_{\vartheta_0}}0$ as $T\to\infty$, in the sense that
  for every bounded continuous function $h:\mathbb R\to\mathbb R$ we have
  \[
    \lim_{T\to\infty} \sup_{\vartheta_0\in K}
    \Bigl|
    \E_{\vartheta_0}\bigl[h(\Delta_{T,\vartheta_0})\bigr]
    -\E\bigl[h(\xi)\bigr]
    \Bigr|=0
  \]
  for $\xi \sim\mathcal N(0,1)$ and
  \[
    \lim_{T\to\infty} \sup_{\vartheta_0\in K}
    \Prob_{\vartheta_0}\big(|\psi_T(u,\vartheta_0)|>\varepsilon\big)=0.
  \]
  In particular, the LAN expansion from \citep[Definition II.2.2]{IbragimovHasminskii1981} is satisfied.
\end{theorem}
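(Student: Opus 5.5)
Under $\Prob_{\vartheta_0}$ we have $dX_t-aX_tdt=\vartheta_0 m_t(\vartheta_0)dt+d\bar W_t$. Substituting this into \eqref{eq:likelihood_X} gives, with $\delta_T\coloneqq I_T(\vartheta_0)^{-1/2}$ and $\vartheta_u=\vartheta_0+u\delta_T$,
\[
\log L_{T,\vartheta_0}(u)=\int_0^T\bigl(\vartheta_u m_t(\vartheta_u)-\vartheta_0 m_t(\vartheta_0)\bigr)d\bar W_t-\tfrac12\int_0^T\bigl(\vartheta_u m_t(\vartheta_u)-\vartheta_0 m_t(\vartheta_0)\bigr)^2dt .
\]
Write $g_t(\vartheta)\coloneqq\vartheta m_t(\vartheta)$ and $\dot g_t\coloneqq m_t(\vartheta_0)+\vartheta_0\partial_\vartheta m_t(\vartheta_0)$. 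We then define
\[
\Delta_{T,\vartheta_0}\coloneqq\delta_T\int_0^T\dot g_t\,d\bar W_t=\delta_T S_T(\vartheta_0),
\]
which is the score representation after Lemma \ref{lem:score-derivation}. The remainder is
\[
\psi_T=\int_0^T R_t\,d\bar W_t-\tfrac12\Bigl(\int_0^T(u\delta_T\dot g_t+R_t)^2dt-u^2\Bigr),\qquad R_t\coloneqq g_t(\vartheta_u)-g_t(\vartheta_0)-u\delta_T\dot g_t .
\]

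\emph{Step 1 (CLT for $\Delta$).} By the martingale CLT, in a version uniform in $\vartheta_0\in K$, it suffices to show
\[
\sup_{\vartheta_0\in K}\Prob_{\vartheta_0}\Bigl(\bigl|\delta_T^2\textstyle\int_0^T\dot g_t^2dt-1\bigr|>\varepsilon\Bigr)\to0 .
\]
Since $\dot g_t=v(\vartheta_0)^\top Z_t(\vartheta_0)$ is a quadratic-form input, Proposition \ref{prop:aou-propositions}\,\ref{item:aou-a} applied to $Z_t(\vartheta_0)$ from Remark \ref{rem:stable-process} gives
\[
T^{-1}\int_0^T\dot g_t^2dt\to I(\vartheta_0)\quad\text{in }L^1,\ \text{uniformly}.
\]
Proposition \ref{prop:uniform-fisher} gives $T\delta_T^2\to I(\vartheta_0)^{-1}$ uniformly on $K$, with $I$ bounded away from $0$ and $\infty$. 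For uniformity of the CLT itself I would avoid abstract theorems and argue via characteristic functions. Set $M_s\coloneqq\delta_T\int_0^s\dot g\,d\bar W$ and $\langle M\rangle_s$ its quadratic variation. Then $\exp(i\lambda M_T+\tfrac{\lambda^2}{2}\langle M\rangle_T)$ has expectation one, or at least after stopping at the time $\langle M\rangle$ exceeds $2$. Combined with the uniform convergence $\langle M\rangle_T\to1$ in probability, this gives
\[
\sup_{\vartheta_0\in K}\bigl|\E_{\vartheta_0}e^{i\lambda\Delta}-e^{-\lambda^2/2}\bigr|\to0,
\]
and hence the uniform statement for bounded continuous $h$ by a standard smoothing argument.

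\emph{Step 2 (remainder).} It suffices to show $\sup_{\vartheta_0\in K}\E_{\vartheta_0}\int_0^T R_t^2dt\to0$. Indeed, Itô's isometry then handles the stochastic integral. The cross term $2u\delta_T\int\dot g R$ is controlled by Cauchy–Schwarz together with $\delta_T^2\int\dot g^2=O_P(1)$. Finally, $u^2(\delta_T^2\int\dot g^2-1)\to0$ by Step 1.

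For the bound on $R_t$, write
\[
R_t=\vartheta_u\bigl(m_t(\vartheta_u)-m_t(\vartheta_0)-u\delta_T\partial_\vartheta m_t(\vartheta_0)\bigr)+u^2\delta_T^2\,\partial_\vartheta m_t(\vartheta_0).
\]
Lemma \ref{lem:mean-square-diff}, with $h=u\delta_T$, bounds $\E_{\vartheta_0}$ of the square of the bracket by $Ch^4$ uniformly in $t$ and in $\vartheta,\vartheta_0$. Moreover, $\sup_t\E(\partial_\vartheta m_t)^2\le C$ by Proposition \ref{prop:aou-propositions}\,\ref{item:aou-b}. Hence
\[
\E\int_0^TR_t^2dt\le CT\delta_T^4u^4=O(T^{-1})\to0,
\]
uniformly on $K$; here $\vartheta_u\in\Theta$ for large $T$ because $K$ is compact and $u$ is fixed.

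\emph{Main obstacle.} The main obstacle is the uniformity in $\vartheta_0$, in two places. First, the constant in \eqref{eq:bound_rest} must be uniform in $t\in[0,\infty)$ and not merely on $[0,T]$; the lemma states it for all $t\in[0,T]$ with $C$ independent of $T$, which I will use. Second, the uniform martingale CLT in Step 1 requires care; the stopped exponential martingale argument is what I would try first.
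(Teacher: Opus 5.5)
Your proposal is correct and follows essentially the same route as the paper: the same martingale-minus-half-quadratic-variation form of $\log L_{T,\vartheta_0}(u)$, the same first-order expansion of $g_t$ with $\E_{\vartheta_0}[R_t^2]\le C(\vartheta_u-\vartheta_0)^4$, which yields $\E_{\vartheta_0}\int_0^T R_t^2\,dt\le Cu^4T/I_T(\vartheta_0)^2\to0$, and the same use of Propositions~\ref{prop:aou-propositions} and~\ref{prop:uniform-fisher} for the quadratic variation. The only differences are minor: you split $R_t$ using the bound on $\E_{\vartheta_0}[(\partial_\vartheta m_t(\vartheta_0))^2]$ rather than the increment bound of Lemma~\ref{lem:m_exp_bounds}, and you prove the uniform CLT for $\Delta_{T,\vartheta_0}$ directly via the stopped exponential martingale $\exp(i\lambda M_s+\tfrac{\lambda^2}{2}\langle M\rangle_s)$, which is valid and makes the uniformity in $\vartheta_0$ explicit, whereas the paper cites Proposition~1.20 of \citet{kutoyants2004statistical}.
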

The proof can be found in Section \ref{sec:thm-proof}.

\begin{remark}\label{rem:lan}~
\begin{enumerate}[(i)]
\item The LAN property shows that our statistical model is asymptotically equivalent to a Gaussian location model, compare Section 7 of \citet{van-der-Vaart1996}. This equivalence is the main building block for analysing the asymptotic properties of the MLE.
  \item Similarly to Theorem \ref{thm:lan_property}, we can establish the uniform LAN property with the normalising factor $T^{-1/2}$ instead of $I_T(\vartheta_0)^{-1/2}$. For the perturbed parameter $\vartheta_u = \vartheta_0 + uT^{-1/2}$, the log-likelihood ratio relative to $\Prob_{\vartheta_0}$ admits the expansion 
  \[
    \log \tilde L_T(\vartheta_u;\vartheta_0)
    = u\Delta_{T,\vartheta_0} - \frac{1}{2} u^2 I(\vartheta_0) + \psi_T(u,\vartheta_0),\quad u\in T^{1/2}(\Theta-\vartheta_0),
  \]
  where $\Delta_{T,\vartheta_0} \xrightarrow[]{d} \mathcal{N}\bigl(0, I(\vartheta_0)\bigr)$ and $\psi_T(u,\vartheta_0) \xrightarrow{\Prob_{\vartheta_0}} 0$ as $T\to\infty$, uniformly over $\vartheta_0\in K$.
  \end{enumerate}
\end{remark}

We now state the main results for the MLE. 

\begin{theorem}
  \label{thm:main-mle}
  Let $K\subset\Theta$ be a compact set, then the MLE $\hat\vartheta_T$ satisfies, uniformly in $\vartheta_0\in K$, the following properties as $T\to\infty$:
  \begin{enumerate}[(i)]
    \item $\hat{\vartheta}_T$ is consistent.
    \item $\sqrt{I_T(\vartheta_0)}\bigl(\hat{\vartheta}_T-\vartheta_0\bigr)\xrightarrow[]{d} \mathcal{N}(0,1)$.
    \item All moments of $\sqrt{I_T(\vartheta_0)}\bigl(\hat{\vartheta}_T-\vartheta_0\bigr)$ converge to the corresponding moments of $\mathcal{N}(0,1)$.
  \end{enumerate}
\end{theorem}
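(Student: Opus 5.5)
The plan is to apply Theorem I.10.1 of \citet{IbragimovHasminskii1981} to the normalised likelihood ratio process $Z_T(u)\coloneqq L_{T,\vartheta_0}(u)$, $u\in U_{T,\vartheta_0}$. That theorem yields consistency, asymptotic normality and convergence of all moments of $\sqrt{I_T(\vartheta_0)}(\hat\vartheta_T-\vartheta_0)$, uniformly in $\vartheta_0\in K$, provided three conditions hold uniformly on $K$.

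The first condition is convergence of the finite-dimensional distributions of $Z_T$ to those of $Z(u)=\exp(u\xi-u^2/2)$ with $\xi\sim\mathcal N(0,1)$. The second is a H\"older bound in Hellinger distance,
\[
\E_{\vartheta_0}\bigl|Z_T^{1/2}(u_2)-Z_T^{1/2}(u_1)\bigr|^2\le C|u_2-u_1|^2 .
\]
The third is an exponential tail bound $\E_{\vartheta_0}Z_T^{1/2}(u)\le e^{-\kappa u^2}$ for some $\kappa>0$.

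\emph{Step 1 (finite-dimensional distributions).} For a single $u$ this is Theorem \ref{thm:lan_property}. For finitely many points $u_1,\dots,u_k$, the expansion holds at each point with the same $\Delta_{T,\vartheta_0}$. Since $\psi_T(u_j,\vartheta_0)\to0$ in probability uniformly on $K$, the vector $(\log Z_T(u_j))_j$ converges jointly to $(u_j\xi-u_j^2/2)_j$ by the continuous mapping theorem.

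\emph{Step 2 (Hellinger continuity).} Write $\vartheta_i=\vartheta_0+u_iI_T^{-1/2}$. By the Girsanov representation \eqref{eq:likelihood_X} and the standard bound for diffusion-type likelihoods \citep[Lemma 1.13]{kutoyants2004statistical},
\[
\E_{\vartheta_0}\bigl|Z_T^{1/2}(u_2)-Z_T^{1/2}(u_1)\bigr|^2\le \tfrac14\int_0^T\E_{\vartheta_1}\bigl(\vartheta_2m_t(\vartheta_2)-\vartheta_1 m_t(\vartheta_1)\bigr)^2dt .
\]
Write $\vartheta_2m_t(\vartheta_2)-\vartheta_1m_t(\vartheta_1)=\int_{\vartheta_1}^{\vartheta_2}(m_t+\vartheta\partial_\vartheta m_t)(\vartheta)\,d\vartheta$. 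By Cauchy--Schwarz, Lemma \ref{lem:mean-square-diff}, and the uniform second-moment bounds of Proposition \ref{prop:aou-propositions}\ref{item:aou-b} applied to $(X_t,m_t(\vartheta),\partial_\vartheta m_t(\vartheta))$ under $\Prob_{\vartheta_1}$, the right-hand side is at most $C T|\vartheta_2-\vartheta_1|^2$. By Proposition \ref{prop:uniform-fisher}, $I_T(\vartheta_0)\ge cT$ uniformly on $K$, so this is at most $C|u_2-u_1|^2$.

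\emph{Step 3 (large deviations of the Hellinger affinity).} This is the main obstacle. For the Gaussian likelihood I would use the exact identity
\[
\E_{\vartheta_0}Z_T^{1/2}(u)\le \Bigl(\E_{\vartheta_0}\exp\Bigl(-c\int_0^T(\vartheta_um_t(\vartheta_u)-\vartheta_0m_t(\vartheta_0))^2dt\Bigr)\Bigr)^{1/2}
\]
for a suitable $c>0$, obtained via the supermartingale/H\"older trick. The task is then to bound this Laplace transform by $e^{-\kappa u^2}$.

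The integrand is a quadratic form in the Gaussian process $\widetilde Z_t(\vartheta_0,\vartheta_u)$ from Remark \ref{rem:stable-process}. Its time average concentrates by Proposition \ref{prop:aou-propositions}\ref{item:aou-a} at the stationary value
\[
g(\vartheta_0,\vartheta)=\E_{\mu}\bigl(\vartheta\bar m(\vartheta)-\vartheta_0\bar m(\vartheta_0)\bigr)^2 .
\]
I would split into two regimes. For small $|\vartheta_u-\vartheta_0|$, $g\ge c(\vartheta_u-\vartheta_0)^2$ by the Taylor expansion and $I(\vartheta_0)>0$. For $|\vartheta_u-\vartheta_0|\ge\delta$, I need identifiability, $g>0$. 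I would derive this from $b\neq0$: the spectral densities of $X$ under $\vartheta\ne\vartheta_0$ differ, because the transfer function of $X$ depends on $\vartheta b$ and $\vartheta$ rather than only on $\vartheta^2$. Continuity of $g$ on $\bar\Theta^2$ then gives a positive lower bound on compacts. Since $\Theta$ is bounded, $u^2\le CT$, so in both regimes $Tg\ge c'u^2$.

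To convert the mean bound into a Laplace-transform bound, I would use Gaussianity. The exponent is a quadratic functional of a Gaussian process, so its Laplace transform equals $\prod_j(1+2c\lambda_j)^{-1/2}$, where $\lambda_j$ are the eigenvalues of the covariance operator of $\widetilde Z$ restricted to the form. By Proposition \ref{prop:aou-propositions}\ref{item:aou-c}, the exponential decay of the autocovariances bounds the largest eigenvalue uniformly by $C|\vartheta_u-\vartheta_0|^2$, while the sum of the eigenvalues is of order $Tg$. Together these give $\log\E\exp(-c\cdot)\le -c''Tg\le-\kappa u^2$.

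Having verified the three conditions uniformly on $K$, Theorems I.10.1 and III.1.1 of \citet{IbragimovHasminskii1981} yield (i)--(iii).
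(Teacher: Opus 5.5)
Your proposal is correct and follows essentially the same route as the paper. Both use the Ibragimov--Has'minskii programme with LAN for the finite-dimensional distributions, and a change-of-measure bound $\E_{\vartheta_0}|L_{T,\vartheta_0}^{1/2}(u)-L_{T,\vartheta_0}^{1/2}(v)|^2\le CT(\vartheta_1-\vartheta_2)^2$ for Hellinger continuity. For the affinity, both use the H\"older/supermartingale trick followed by the Gaussian Laplace-transform bound "trace over one plus twice the operator norm" (the paper cites exactly this as Kallianpur--Selukar, Lemma~3.1(ii)). The trace is bounded below by a Taylor expansion near $\vartheta_0$ and by uniform identifiability away from it, and the operator norm is controlled by exponential covariance decay.

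The deviations are minor. H\"older with $1<p<2$ gives the exponent $1/q$ with $q=p/(p-1)>2$ rather than $1/2$, which is harmless. You also obtain identifiability from the spectral density $f_\vartheta(\omega)=(\omega^2+c^2+\vartheta^2)/\bigl((ac-b\vartheta-\omega^2)^2+(a+c)^2\omega^2\bigr)$ of $X$, whereas the paper matches coefficients in the limiting filter equation in Proposition~\ref{prop:uniform-separation}. Your route works: $f_\vartheta$ does determine $\vartheta$ when $b\neq0$, but you need to rule out a possible pole--zero cancellation and to state the link between $g=0$ and equal spectral densities.
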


The proof 
is given in Section \ref{sec:thm-proof}.

\begin{remark}
By combining the LAN property from Theorem \ref{thm:lan_property} and the asymptotic normality of the estimation error from Theorem \ref{thm:main-mle}, we obtain that the MLE $\hat{\vartheta}_T$ is asymptotically efficient in Fisher's sense according to Definition II.11.1 of \citet{IbragimovHasminskii1981}.
\end{remark}

\begin{remark}
    The following heuristic explains the idea behind the asymptotic normality in Theorem \ref{thm:main-mle}. 
We first establish that the likelihood ratio process $u\mapsto L_{T,\vartheta_0}(u)$ converges uniformly to a limiting likelihood ratio process
$u\mapsto L_{\vartheta_0}(u)$. This limiting process has an almost surely unique maximiser
$\hat u$, which is Gaussian. Therefore, one expects the normalised MLE to converge to the maximiser of the limiting likelihood. Formally, this idea can be written as
\[
\begin{aligned}
\Prob_{\vartheta_0}\left(I_T(\vartheta_0)^{1/2}\bigl(\argmax_{\vartheta\in\bar\Theta}L_T(\vartheta)-\vartheta_0\bigr) \cap A \neq \varnothing\right)
&= \Prob_{\vartheta_0}\left(\sup_{u\in A} L_{T,\vartheta_0}(u) \ge \sup_{u\notin A} L_{T,\vartheta_0}(u)\right) \\
&\xrightarrow[T\to\infty]{} \Prob_{\vartheta_0}\left(\sup_{u\in A} L_{\vartheta_0}(u) \ge \sup_{u\notin A} L_{\vartheta_0}(u)\right)\\
&= \Prob_{\vartheta_0}\left(\hat u\in A\right),
\end{aligned}
\]
for suitable continuity sets $A$. 
\end{remark}

\begin{remark}\label{rmk:conf-interval}
  Let $\alpha\in(0,1)$ and let $q_{1-\alpha/2}$ denote the $(1-\alpha/2)$-quantile of the standard normal distribution. Using the observed information evaluated at the MLE\[J_T(\hat{\vartheta}_T)=\int_0^T\bigl(m_t(\hat{\vartheta}_T)+\hat{\vartheta}_T \partial_{\vartheta}m_t(\hat{\vartheta}_T)\bigr)^2dt,\]
  we obtain for $\vartheta_0$ the confidence interval 
\begin{equation*}
  \mathcal I_T^{1-\alpha} = \bigl[ \hat\vartheta_T - q_{1-\alpha/2}J_T(\hat\vartheta_T)^{-1/2},
    \hat\vartheta_T + q_{1-\alpha/2}J_T(\hat\vartheta_T)^{-1/2}
  \bigr],
\end{equation*}
which has asymptotic coverage $1-\alpha$ as $T\to\infty$. This follows from the asymptotic normality statement in Theorem \ref{thm:main-mle}, Proposition \ref{prop:uniform-fisher}, Lemma \ref{lem:obs_inf_conv} and Slutsky's theorem, which imply
  \[
    \sqrt{J_T(\hat\vartheta_T)}
    \bigl(\hat\vartheta_T-\vartheta_0\bigr)
    \xrightarrow[]{d} \mathcal N(0,1).
  \]
\end{remark}

\begin{remark}\label{rmk:generalization}
One natural extension is to replace the linear dependence on the unknown
coupling parameter by a known regular reparametrisation. More precisely, suppose
that the coupling parameter is given by $\vartheta = f(\theta),  \theta\in\Theta$,
where $\Theta\subset\mathbb R$ is non-empty, open and bounded. We assume
that $f$ extends to a twice continuously differentiable function on an open neighbourhood of
$\bar\Theta$ and satisfies
\[
    \inf_{\theta\in\bar\Theta}|f'(\theta)|>0.
\]
In addition, the stability condition of Assumption \ref{ass:param_space} has to hold after reparametrisation, that is
\[
    a+c<0, \qquad b\neq 0, \qquad ac-bf(\theta)>0
    \quad \text{for all } \theta\in\bar\Theta .
\]

Under these assumptions, the preceding results transfer directly to the
parametrisation by $\theta$. Indeed, the likelihood is obtained from the
likelihood for $\vartheta$ by substituting $\vartheta=f(\theta)$ and the
regularity assumptions on $f$ allow the differentiability and LAN arguments to
be applied by the chain rule. The Fisher information is transformed according to
\[
    I_T^\theta(\theta_0)
    =
    f'(\theta_0)^2 I_T(f(\theta_0)).
\]
Since $\inf_{\theta\in\bar\Theta}|f'(\theta)|>0$ by assumption, the limiting Fisher
information remains positive whenever the original limiting Fisher information
for $\vartheta$ is positive. Consequently, the LAN property, consistency,
asymptotic normality, convergence of moments and asymptotic efficiency of the
MLE remain valid for the estimator of $\theta$.
\end{remark}

\subsection{Proofs of the main results} \label{sec:thm-proof}
 Before discussing the implications of the previous analysis for parameter estimation in linear SPDEs, we prove Theorems \ref{thm:lan_property} and \ref{thm:main-mle}. 
\begin{proof}[Proof of Theorem \ref{thm:lan_property}]
  The log-likelihood ratio comparing the model with parameter $\vartheta_u = \vartheta_0 + u I_T(\vartheta_0)^{-1/2}$, $u \in U_{T,\vartheta_0}$, to the model with parameter $\vartheta_0$ can be expressed as
  \begin{equation}\label{eq:lan-likelihood}
    \begin{aligned}
       & \log L_{T,\vartheta_0}(u)
      = \int_0^T \big( (\vartheta_um_t(\vartheta_u)
      - \vartheta_0m_t(\vartheta_0) \big) d\bar{W}_t
      - \frac{1}{2} \int_0^T \big( (\vartheta_um_t(\vartheta_u)
      - \vartheta_0m_t(\vartheta_0)\big)^{2} dt.
    \end{aligned}
  \end{equation}
  Note that this takes the form of a martingale minus one half of its quadratic variation. We begin by analysing the quadratic variation term and let $g_t(\vartheta) \coloneqq \vartheta m_t(\vartheta)$.
  By Lemma \ref{lem:mean-square-diff}, the function $\vartheta\mapsto g_t(\vartheta)$ is differentiable in mean square for any $t\ge 0$, such that we can write
  \begin{align}
  \label{eq:LAN-quadratic}
      Q_T(u) &\coloneq \frac{1}{2} \int_0^T \bigl(g_t(\vartheta_u) - g_t(\vartheta_0)\bigr)^2 dt\nonumber\\
      &= \frac{u^2}{2}\frac{\frac{1}{T}\int_0^T \Bigl(\frac{\partial g_t}{\partial\vartheta}(\vartheta_0)\Bigr)^{2} dt}{\frac{1}{T}I_T(\vartheta_0)}
     + \frac{1}{2}\int_0^T \Bigl(
    2u I_T(\vartheta_0)^{-1/2}\frac{\partial g_t}{\partial\vartheta}(\vartheta_0)
    R_t\bigl(\vartheta_u,\vartheta_0\bigr)
    + R_t^2\bigl(\vartheta_u,\vartheta_0\bigr)
    \Bigr)dt
  \end{align}
  for the remainder $R_t$ satisfying
\begin{align*}
\E_{\vartheta_0}\bigl[R_t^2(\vartheta_u,\vartheta_0)\bigr]
&=\E_{\vartheta_0}\Bigl[\Bigl(
(\vartheta_u-\vartheta_0)\bigl(m_t(\vartheta_u)-m_t(\vartheta_0)\bigr) \\
&\qquad \qquad
+\vartheta_0\bigl(
m_t(\vartheta_u)-m_t(\vartheta_0)
-(\vartheta_u-\vartheta_0)\partial_{\vartheta}m_t(\vartheta_0)
\bigr)
\Bigr)^2\Bigr] \\
&\le 2\bigl(C_1+|\Theta|^2C_2\bigr)(\vartheta_u-\vartheta_0)^4
\end{align*}
  uniformly in $T$ and $K$, where we used \eqref{eq:bound_rest} together with the moment estimates of Lemma \ref{lem:m_exp_bounds}.
  In particular, it follows that
  \begin{equation}
      \E_{\vartheta_0}\left[\int_0^T R_t^2\bigl(\vartheta_u,\vartheta_0\bigr)dt\right]
    \le \frac{Cu^4T}{I_T^2(\vartheta_0)}\to 0,\quad T\to\infty,\label{eq:Remaindervanishes}
  \end{equation}
  uniformly in $\vartheta_0\in K$. By Proposition \ref{prop:aou-propositions} \ref{item:aou-a} and Proposition \ref{prop:uniform-fisher}, the numerator and denominator in the first summand of \eqref{eq:LAN-quadratic} both converge uniformly to $I(\vartheta_0)$ and $\inf_{\vartheta_0 \in K}I(\vartheta_0)>0$. Hence, this summand converges in $L^1(\mathbb{P}_{\vartheta_0})$ to $u^2/2$, uniformly in $\vartheta_0\in K$. By the Cauchy-Schwarz inequality and the decay of the remainder by \eqref{eq:Remaindervanishes}, the second summand in \eqref{eq:LAN-quadratic} tends to zero in $L^1(\mathbb{P}_{\vartheta_0})$ as $T\to\infty$, uniformly in $\vartheta_0\in K$. Combining the convergence results for the two summands in \eqref{eq:LAN-quadratic}, we obtain that
  \begin{equation}
    \sup_{\vartheta_0\in K} \E_{\vartheta_0}\left[\left|Q_T(u)-\tfrac12 u^2\right|\right]\xrightarrow{T\to\infty }0,\label{eq:Convergence_QaudraticVariation}
  \end{equation}
  from which uniform convergence in probability of the quadratic variation $Q_T$ to $u^2/2$ follows.

Next we consider the martingale term in \eqref{eq:lan-likelihood}. By the same first-order expansion of $g_t$ at $\vartheta_0$ as above we obtain
\[
\int_0^T \bigl(g_t(\vartheta_u)-g_t(\vartheta_0)\bigr)d\bar W_t
=
u\Delta_{T,\vartheta_0}
+
\int_0^T R_t(\vartheta_u,\vartheta_0)d\bar W_t,
\]
where
\[
\Delta_{T,\vartheta_0}
\coloneq
I_T(\vartheta_0)^{-1/2}\int_0^T \frac{\partial g_t}{\partial\vartheta}(\vartheta_0)d\bar W_t.
\]
By \eqref{eq:Remaindervanishes}, the remainder term converges to zero in $L^2(\Prob_{\vartheta_0})$, hence in probability, uniformly over $\vartheta_0\in K$. Moreover, the (uniform) convergence in distribution of $\Delta_{T,\vartheta_0}$ to $\mathcal N(0,1)$ follows from Proposition 1.20 of \citet{kutoyants2004statistical}. Combining this with \eqref{eq:Convergence_QaudraticVariation}, we obtain the desired LAN expansion
\[
\log L_{T,\vartheta_0}(u)
=
u\Delta_{T,\vartheta_0}
-\tfrac12 u^2
+\psi_T(u,\vartheta_0).\qedhere
\]
\end{proof}

\begin{proof}[Proof of Theorem \ref{thm:main-mle}]
We verify that the statistical experiment $(\Prob_\vartheta)_{\vartheta\in\Theta}$ satisfies Conditions~N1--N4 of \citep[Chapter III]{IbragimovHasminskii1981} with the choice $\varphi(T,\vartheta)=I_T(\vartheta)^{-1/2}$.
In view of Proposition \ref{prop:uniform-fisher} and Theorem \ref{thm:lan_property}, it suffices to show the following properties:
\begin{enumerate}[(i)]
    \item\label{enum:Helper_MLE_Proof_C3} For any compact $K \subset \Theta$, some $0<\beta ,m <\infty$ and $K$-dependent constants $0<a,B<\infty$ it holds
\[ \sup_{\vartheta_0 \in K} \sup_{\substack{u,v \in U_{T,\vartheta_0} \\ |u| < R, |v| < R}} |u - v|^{-\beta}\mathbb{E}_{\vartheta_0} \left| L_{T,\vartheta_0}^{1/m}(u) - L_{T,\vartheta_0}^{1/m}(v) \right|^m < B(1+R^a),\quad R>0. \]
\item\label{enum:Helper_MLE_Proof_C4} For any compact $K \subset \Theta$ and any $N > 0$ there exists a time $0\le T_0<\infty$ depending on $N$ and $K$ such that
\[ \sup_{\vartheta_0 \in K} \sup_{T \ge T_0} \sup_{u \in U_{T,\vartheta_0}} |u|^N\mathbb{E}_{\vartheta_0} \bigl[L_{T,\vartheta_0}^{1/2}(u)\bigr] < \infty. \]
\end{enumerate}
Once these conditions have been established, Theorem \ref{thm:main-mle} follows from Theorem~III.1.1 of \citet{IbragimovHasminskii1981}.
We will first verify Condition  \eqref{enum:Helper_MLE_Proof_C3} with $\beta=m=2$.
To this end, define $\vartheta_1 \coloneqq \vartheta_0+I_T(\vartheta_0)^{-1/2}u$ and $\vartheta_2 \coloneqq \vartheta_0+I_T(\vartheta_0)^{-1/2}v$. Then the Radon Nikodym Theorem \citep[Corollary 7.34]{klenkeWahrscheinlichkeitstheorie2020} yields
  \begin{equation} \label{eq:girsanov}
    \mathbb E_{\vartheta_0} \left| L_{T,\vartheta_0}^{1/2}(u) - L_{T,\vartheta_0}^{1/2}(v) \right|^2 = 2- 2\mathbb E_{\vartheta_0}[L_{T,\vartheta_0}^{1/2}(u)L_{T,\vartheta_0}^{1/2}(v)] = 2- 2\mathbb E_{\vartheta_1}[L_{T}^{1/2}(\vartheta_2;\vartheta_1)],
  \end{equation} where we changed the measure from $\Prob_{\vartheta_0}$ to $\Prob_{\vartheta_1}$ and $L_{T}(\vartheta_2;\vartheta_1)$ is the likelihood ratio of parameters $\vartheta_1$ and $\vartheta_2$.
  An application by Ito's formula shows that the likelihood ratio from \eqref{eq:lan-likelihood} satisfies
  \[
    L_{T}^{1/2}(\vartheta_2;\vartheta_1)= 1+\int_0^T \frac{g_t(\vartheta_1,\vartheta_2)}{2} L_{t}^{1/2}(\vartheta_2;\vartheta_1)d\bar W_t-\int_0^T \frac{g_t^2(\vartheta_1,\vartheta_2)}{8} L_{t}^{1/2}(\vartheta_2;\vartheta_1)dt,
  \]
  where $g_t(\vartheta_1,\vartheta_2) \coloneqq (\vartheta_2m_t(\vartheta_2)-\vartheta_1m_t(\vartheta_1))$.
  We substitute the representation of $L_T^{1/2}(\vartheta_2;\vartheta_1)$ into \eqref{eq:girsanov} and take the expectation. The stochastic integral term vanishes in expectation since
$\mathbb{E}_{\vartheta_1}[g_t^2(\vartheta_1,\vartheta_2)L_t(\vartheta_2;\vartheta_1)]$
is uniformly bounded in $t\ge 0$. This follows by absorbing the likelihood term through a change of measure and then applying the moment bound on $g_t$ from Lemma~\ref{lem:m_exp_bounds}. By Tonelli's theorem and the basic inequality $2ab\le a^2+b^2$ we obtain
  \begin{align*}
      \mathbb{E}_{\vartheta_0}\bigl| L_{T,\vartheta_0}^{1/2}(u) - L_{T,\vartheta_0}^{1/2}(v) \bigr|^{2} &= \int_0^T \frac{1}{4}\mathbb{E}_{\vartheta_1}\bigl[L_{t}^{1/2}(\vartheta_2;\vartheta_1) g_t^2(\vartheta_1,\vartheta_2)\bigr] dt \\
      &\le \int_0^T\frac{1}{8}\left(\mathbb{E}_{\vartheta_1}[g_t^2(\vartheta_1,\vartheta_2)]+\mathbb{E}_{\vartheta_1}[g_t^2(\vartheta_1,\vartheta_2)L_{t}(\vartheta_2;\vartheta_1)] \right)dt\\
      &=\int_0^T \frac{1}{8}\Bigl( \mathbb{E}_{\vartheta_1}\bigl[g_t^{2}(\vartheta_1,\vartheta_2)\bigr] + \mathbb{E}_{\vartheta_2}\bigl[g_t^{2}(\vartheta_1,\vartheta_2)\bigr] \Bigr) dt \\
     & \le \frac{1}{4} T C (\vartheta_1-\vartheta_2)^{2}
    = C T \frac{(u-v)^{2}}{4 I_T(\vartheta_0)},
  \end{align*}
  where the uniform bound from Lemma \ref{lem:partial_m_bounds} and boundedness of the parameter space $\Theta$ has been used in the last inequality. Since $\frac{1}{T}I_T(\vartheta_0)\to I(\vartheta_0)\in (0,\infty)$ as $T\to\infty$ uniformly in $\vartheta_0\in K$ from Proposition \ref{prop:uniform-fisher}, we can find some $B\in(0,\infty)$ such that $CT/(4I_T(\vartheta_0))\le B$ for any $\vartheta_0\in K$, provided $T$ is sufficiently large. Condition \eqref{enum:Helper_MLE_Proof_C3} is verified.\\
  We continue with the verification of Condition \eqref{enum:Helper_MLE_Proof_C4}. We will show an even stronger statement, namely that for any compact subset $K \subset \Theta$, there exists some time $0\le T_0<\infty$ and some constant $0<C<\infty$ depending on $K$ such that
    \begin{equation}
        \sup_{\vartheta_0 \in K} \sup_{T \ge T_0} \sup_{u \in U_{T,\vartheta_0}} \mathbb{E}_{\vartheta_0} [L_{T,\vartheta_0}^{1/2}(u)] \le \exp(-Cu^2).\label{eq:Helper_MLE_Cond_C4}
    \end{equation}
To this end, define $\vartheta_u \coloneqq \vartheta_0+I_T(\vartheta_0)^{-1/2}u$ and let $1<p<2$.
  \begin{equation}
    \begin{aligned}
      \mathbb{E}_{\vartheta_0}\bigl[L_{T,\vartheta_0}^{1/2}(u)\bigr]
       & =\mathbb{E}_{\vartheta_0}\Bigl[\exp\Bigl(\frac12
                                    \int_0^T\bigl(g_t(\vartheta_u)-g_t(\vartheta_0)\bigr)d\bar{W_t}
                                    - \frac14
                                    \int_0^T\bigl(g_t(\vartheta_u)-g_t(\vartheta_0)\bigr)^{2}dt
                                    \Bigr)\Bigr]         \\
       & =\mathbb{E}_{\vartheta_0}\Bigl[\exp\Bigl(\frac12
                                    \int_0^T\bigl(g_t(\vartheta_u)-g_t(\vartheta_0)\bigr)d\bar{W_t}
                                    - \frac p8
                                    \int_0^T\bigl(g_t(\vartheta_u)-g_t(\vartheta_0)\bigr)^{2}dt
                                    \Bigr)\\
                                    &\quad\times\exp\Bigl(\Bigl(\frac p8-\frac14\Bigr)
                                    \int_0^T\bigl(g_t(\vartheta_u)-g_t(\vartheta_0)\bigr)^{2}dt
                                    \Bigr)\Bigr]         \\
       & \le\mathbb{E}_{\vartheta_0}\Bigl[\exp\Bigl(\frac p2
                                      \int_0^T\bigl(g_t(\vartheta_u)-g_t(\vartheta_0)\bigr)d\bar{W_t}
                                      - \frac{p^2}{8}
                                      \int_0^T\!\bigl(g_t(\vartheta_u)-g_t(\vartheta_0)\bigr)^{2}dt
                                      \Bigr)\Bigr]^{\frac 1p}       \\
       & \quad\times\mathbb{E}_{\vartheta_0}\Bigl[\exp\Bigl(\bigl(\frac p8-\frac14\bigr)q
                                                \int_0^T\bigl(g_t(\vartheta_u)-g_t(\vartheta_0)\bigr)^{2}dt
                                                \Bigr)\Bigr]^{\frac 1q} \\
       & \le\mathbb{E}_{\vartheta_0}\Bigl[\exp\Bigl(\bigl(\frac p8-\frac14\bigr)q
                                      \int_0^T\bigl(g_t(\vartheta_u)-g_t(\vartheta_0)\bigr)^{2}dt
                                      \Bigr)\Bigr]^{\frac 1q},
    \end{aligned}
  \end{equation}
  where we used Hölder inequality with $p$ and its conjugate $q=p/(p-1)$ in the first inequality. For the second inequality, we use the fact that $\exp\Bigl(\frac p2
  \int_0^T\bigl(g_t(\vartheta_u)-g_t(\vartheta_0)\bigr)d\bar{W_t}
  - \frac{p^2}{8}
  \int_0^T\bigl(g_t(\vartheta_u)-g_t(\vartheta_0)\bigr)^{2}dt
  \Bigr)$ is a local martingale. In particular, it is a supermartingale and its expectation is always bounded by 1. Furthermore, the choice of $1<p<2$ can be made such that $(\frac{1}{4}-\frac{p}{8})q \in (0,1)$. Therefore, we can use Jensen's inequality for concave functions to get
\begin{align*}
\mathbb{E}_{\vartheta_0}\Bigl[\exp\Bigl(\Bigl(\frac p8-\frac14\Bigr)q
      \int_0^T&\bigl(g_t(\vartheta_u)-g_t(\vartheta_0)\bigr)^{2}dt
      \Bigr)\Bigr]^{\frac 1q} \\
&\quad\le
\mathbb{E}_{\vartheta_0}\Bigl[\exp\Bigl(-
      \int_0^T\bigl(g_t(\vartheta_u)-g_t(\vartheta_0)\bigr)^{2}dt
      \Bigr)\Bigr]^{\frac14-\frac p8}.
\end{align*}
Note that $g_t(\vartheta_u)-g_t(\vartheta_0)$ is a mean square integrable zero mean Gaussian process since $g_t(\vartheta_0)=\vartheta_0 m_t(\vartheta_0)$ and $m_t(\vartheta_0), m_t(\vartheta_u)$ are jointly Gaussian with mean zero. Therefore, the assumptions of Lemma 3.1(ii) of \citet{KALLIANPUR1991284} are satisfied and we obtain
  \begin{equation}
    \mathbb{E}_{\vartheta_0}\bigl[L_{T,\vartheta_0}^{1/2}(u)\bigr] \le \exp\Big(-\kappa \frac{\mathbb{E}_{\vartheta_0}\bigl[\int_0^T(g_t(\vartheta_u)-g_t(\vartheta_0))^2dt\bigr]}{1+2\lVert K_{\vartheta_0,u}\rVert}\Big),\label{eq:MeanBound}
\end{equation}
  where $\lVert K_{\vartheta_0,u}\rVert$ is the operator norm of the covariance operator of the process $g_t(\vartheta_u)-g_t(\vartheta_0)$ and $\kappa =(1/4-p/8)$. 
  Consider first the numerator $\mathbb{E}_{\vartheta_0}[\int_0^T(g_t(\vartheta_u)-g_t(\vartheta_0))^2dt]$. We can use a Taylor expansion to find
  \begin{equation}\label{eq:lan-expansion}
    \begin{aligned}
      \mathbb{E}_{\vartheta_0}\Bigl[
                               \int_{0}^{T} &\bigl( g_t(\vartheta_u) - g_t(\vartheta_0) \bigr)^{2} dt
                                  \Bigr]
      = \mathbb{E}_{\vartheta_0}\Bigl[
                                    \int_{0}^{T} \Bigl(
                                    \frac{u}{\sqrt{I_T(\vartheta_0)}} \partial_{\vartheta} g_t(\vartheta_0)
                                    + R_t(\vartheta_u,\vartheta_0)
                                    \Bigr)^{2} dt
                                    \Bigr] \\[4pt]
       & = u^{2}
      + 2 \int_{0}^{T}
      \mathbb{E}_{\vartheta_0}\Bigl[
                                  \frac{u}{\sqrt{I_T(\vartheta_0)}}\partial_{\vartheta} g_t(\vartheta_0) R_t(\vartheta_u,\vartheta_0)
                                  \Bigr] dt
      + \int_{0}^{T}
      \mathbb{E}_{\vartheta_0}\bigl[
                                  R_t^{2}(\vartheta_u,\vartheta_0)
                                  \bigr] dt .
    \end{aligned}
  \end{equation}
  From Theorem \ref{thm:lan_property} we know that there exist constants $0<C_1,C_2<\infty$, such that
  \begin{equation}
    \Bigl|
     \int_0^T\! 2\mathbb{E}_{\vartheta_0}\!\Bigl[ \frac{u}{\sqrt{I_T(\vartheta_0)}} \partial_{\vartheta} g_t(\vartheta_0)R_t(\vartheta_u,\vartheta_0) \Bigr] dt
    + \int_0^T\! \mathbb{E}_{\vartheta_0}\bigl[ R_t^{2}(\vartheta_u,\vartheta_0) \bigr] dt
    \Bigr|
    \le \frac{C_1 T |u^3|}{I_T(\vartheta_0)^{\frac 32}} + \frac{C_2 u^{4} T}{I_T(\vartheta_0)^{2}}.
  \end{equation}
  Since $T^{-1} I_T(\vartheta_0)$ converges uniformly to $I(\vartheta_0)>0$ by Proposition \ref{prop:uniform-fisher}, there exists $0<\bar{T},C_{\min}<\infty$ such that for all $T \ge \bar T$ and all $\vartheta_0\in K$,
  \[
    \frac{T}{I_T(\vartheta_0)} \le \frac{2}{C_{\min}}.
  \]
  Hence, for $T \ge \bar T$,  we can bound
\begin{equation}\label{eq:bound_for_small_lambda}
  \begin{aligned}
    \Bigl|
    2 \int_0^T \mathbb{E}_{\vartheta_0}\biggl[
      \frac{u}{\sqrt{I_T(\vartheta_0)}}\partial_\vartheta g_t(\vartheta_0)
      R_t(\vartheta_u,\vartheta_0)
    \biggr] dt
    &+ \int_0^T \mathbb{E}_{\vartheta_0}\bigl[
      R_t^{2}(\vartheta_u,\vartheta_0)
    \bigr] dt
    \Bigr| \\
    & \le
    Cu^{2}\left(\frac{|u|}{\sqrt{I_T(\vartheta_0)}}+\frac{u^{2}}{I_T(\vartheta_0)}\right),
  \end{aligned}
\end{equation}
  where $C$ is a constant depending only on $C_1,C_2,C_{\min}$. Let $\lambda \coloneq \min\{1, 1/(4C)\}$, such that
  \begin{equation*}
      C\Bigl(\frac{|u|}{\sqrt{I_T(\vartheta_0)}}+\frac{u^2}{I_T(\vartheta_0)}\Bigr) \le \frac{1}{2},\quad \dfrac{|u|}{\sqrt{I_T(\vartheta_0)}} < \lambda.
  \end{equation*}
  Therefore, \eqref{eq:lan-expansion} and \eqref{eq:bound_for_small_lambda} yield that uniformly over $\vartheta_0 \in K$, $T\ge \bar T$ and for $|u|I_T(\vartheta_0)^{-1/2} < \lambda$, we obtain
  \begin{equation}\label{eq:bound_trace_small}
    \begin{aligned}
      \mathbb{E}_{\vartheta_0}\Bigl[\int_0^T \bigl(g_t(\vartheta_u)-g_t(\vartheta_0)\bigr)^{2} dt\Bigr]
       & \ge u^{2}
      -\Bigl|2\int_0^T \mathbb{E}_{\vartheta_0}\bigl[\frac{u}{\sqrt{I_T(\vartheta_0)}}R_t(\vartheta_u,\vartheta_0)\bigr] dt \\ 
      &\qquad \quad+\int_0^T \mathbb{E}_{\vartheta_0}\bigl[R_t^{2}(\vartheta_u,\vartheta_0)\bigr] dt \Bigr|\\
      &\ge \frac{u^{2}}{2}.
    \end{aligned}
  \end{equation}
  For $|u|I_T(\vartheta_0)^{-1/2} \ge \lambda$ we use the results from Appendix \ref{sec:ident}. From Proposition \ref{prop:uniform-separation} there exists a time $T_1(\lambda)$ such that for $t\ge T_1(\lambda)$, we obtain the lower bound
  \[
    \inf_{\vartheta_0\in K} \inf_{\substack{\vartheta_u\in \Theta\\ |\vartheta_u-\vartheta_0|\ge \lambda}}
   \mathbb{E}_{\vartheta_0}\left[(g_t(\vartheta_u)-g_t(\vartheta_0))^2\right]
    \ge K(\lambda) >0.
  \]  By Proposition \ref{prop:uniform-fisher}, the Fisher information $I(\vartheta_0)$ is bounded on $K$ and $T/I_T(\vartheta_0)\to 1/I(\vartheta_0)$ uniformly in $\vartheta_0\in K$.
  Since also $\frac{T_1}{I_T(\vartheta_0)}\to 0$ uniformly on $K$,
  there exists $T_2\ge T_1$ such that for all $T\ge T_2$ and all $\vartheta_0\in K$,
  $(T-T_1)/I_T(\vartheta_0)\ge 1/(2\max_{\vartheta_0\in K} I(\vartheta_0))$. Consequently, we obtain for all $T\ge T_2$ and $|\vartheta_u-\vartheta_0|\ge \lambda$, the bound
  \begin{equation}\label{eq:bound_trace_big}
    \begin{aligned}
      \mathbb{E}_{\vartheta_0}\Bigl[\int_0^T \bigl(g_t(\vartheta_u)-g_t(\vartheta_0)\bigr)^{2} dt\Bigr] \ge K(\lambda)(T-T_1) & \ge K(\lambda)(T-T_1)\frac{u^{2}}{I_T(\vartheta_0)|\Theta|^{2}} \\
 &\ge \frac{u^{2}K(\lambda)}{2\max_{\vartheta_0 \in K} I(\vartheta_0)|\Theta|^{2}}
      = Cu^{2}.
    \end{aligned}
  \end{equation}
  Combining Equations \eqref{eq:bound_trace_small} and \eqref{eq:bound_trace_big}, we deduce, for $T\ge T_0 \coloneqq\max(\bar T,T_2)$ and uniformly over all compact subsets of $\Theta$, that
  \begin{equation}\label{eq:bound_trace}
    \mathbb{E}_{\vartheta_0}\Bigl[\int_0^T \bigl(g_t(\vartheta_u)-g_t(\vartheta_0)\bigr)^{2} dt\Bigr] \ge \bar{C}u^2,
  \end{equation}
  for some constant $\bar{C}>0$ independent of $\vartheta_0\in K$ and $T$ sufficiently large. 
Consider next the operator norm $\lVert K_{\vartheta_0,u}\rVert$. The vector process $(X_t,g_t(\vartheta_u),g_t(\vartheta_0))$ under $\Prob_{\vartheta_0}$ satisfies the assumptions of Proposition \ref{prop:aou-propositions} (see Remark \ref{rem:stable-process}).
Then, Proposition \ref{prop:aou-propositions} \eqref{item:aou-c} yields exponential decay of the norm of the covariance matrix of the vector process and we obtain for the covariance under $\mathbb{P}_{\vartheta_0}$ the bound
\begin{align}
  \Bigl|\operatorname{Cov}_{\vartheta_0}\bigl(g_t(\vartheta_u)-g_t(\vartheta_0), g_s(\vartheta_u)-g_s(\vartheta_0)\bigr)\Bigr| 
  &\le 4\Bigl\|\operatorname{Cov}_{\vartheta_0}\Bigl( (X_t , g_t(\vartheta_u) ,g_t(\vartheta_0))^\top, \nonumber\\
  &\qquad\qquad\qquad\quad (X_s, g_s(\vartheta_u), g_s(\vartheta_0))^\top \Bigr)\Bigr\| \nonumber\\
  &\le 4Ce^{-\beta|t-s|}.\label{eq:cov_decay_bound}
\end{align}
  Combining Equation \eqref{eq:cov_decay_bound} and Schur's test \citep{Schur1911,halmos1978bounded}, we deduce 
  \begin{equation}\label{eq:bound-operator}
    \|K_{\vartheta_0,u}\|
    \le \sup_{t\in[0,T]} \int_{0}^{T} \bigl|\operatorname{Cov}_{\vartheta_0}\bigl(g_t(\vartheta_u)-g_t(\vartheta_0), g_s(\vartheta_u)-g_s(\vartheta_0)\bigr)\bigr| ds
    \le \frac{8C}{\beta}.
  \end{equation}
  \eqref{eq:MeanBound}, \eqref{eq:bound_trace} and \eqref{eq:bound-operator} yield a new constant $C$ that does not depend on T, $\vartheta_0$ or $u$ such that \eqref{eq:Helper_MLE_Cond_C4} holds for $T$ sufficiently large.
\end{proof}

\section{Perspectives}\label{sec:Perspectives}

An important instance of coupled systems with partial information is given by activator-inhibitor dynamics \citep[Chapter 2]{murrayMathematicalBiologyII2003}, which evolve both in time and in space. Here, local self-amplification by an activator species $X$ is counteracted by the lateral spread of a fast-diffusing inhibitor $Y$. The latter is typically not observable. Both $X$ and $Y$ are governed by (stochastic) PDEs. Motivated by the stochastic (non-linear) Meinhardt model introduced by \citet{Altmeyer2022}, it is promising to explore how the previous analysis can be extended to (linear) SPDEs. We refer the interested reader to \citep{Da-Prato_Zabczyk_2014} for an excellent exposition of SPDEs.

Let $\mathcal A$ be the Dirichlet Laplacian on $L^2(0,1)$ with eigenbasis $(e_k)_{k\in\mathbb{N}}$, eigenvalues $(-\lambda_k)_{k\in\mathbb{N}}$ and let $\mathbb{I}$ be the identity operator on $L^2(0,1)$. Consider the coupled linear system
\begin{align*}
  dX_t &= \bigl((\mathcal A+a\mathbb{I})X_t+\vartheta Y_t\bigr)dt+dW_t^X,\\
  dY_t &= \bigl(bX_t+(\mathcal A+c\mathbb{I})Y_t\bigr)dt+dW_t^Y,
\end{align*}
$t\in[0,T]$, where $W^X$ and $W^Y$ are independent cylindrical Brownian motions on $L^2(0,1)$.
Assume that $X^T$ is observed and $Y^T$ is hidden. Writing $X_t^k=\langle X_t,e_k\rangle_{L^2(0,1)}$, $Y_t^k=\langle Y_t,e_k\rangle_{L^2(0,1)}$ for $k\in\mathbb{N}$ and denoting the standard Brownian motions by $B_t^{X,k}\coloneq W_t^X(e_k)$, $B_t^{Y,k}\coloneq W_t^Y(e_k)$, we obtain for each $k\in\mathbb{N}$ the Fourier modes
\begin{align*}
  dX_t^k &= \bigl((a-\lambda_k)X_t^k+\vartheta Y_t^k\bigr)dt+dB_t^{X,k},\\
  dY_t^k &= \bigl(bX_t^k+(c-\lambda_k)Y_t^k\bigr)dt+dB_t^{Y,k}.
\end{align*}
Each Fourier mode has exactly the same form as \eqref{eq:XYsystem}, with $a$ and $c$ replaced by $a-\lambda_k$ and $c-\lambda_k$, respectively. Moreover, these projected systems are independent for different values of $k\in\mathbb{N}$, because the Laplacian is diagonal in the basis $(e_k)_{k\in \mathbb{N}}$ and the standard Brownian motions $(B_t^{X,k})_{t\in[0,T]}$ and $(B_t^{Y,k})_{t\in[0,T]}$ are independent across different Fourier modes.

As in the spectral approach in the parameter estimation for SPDEs (see, e.g., \citet{Huebner1995}), consider observing the first $n\in\mathbb{N}$ components $(X_t^k: t\in[0,T], k=1,\dots, n)$.
Given the stability and identifiability conditions from Assumption~\ref{ass:param_space}, the results of Sections~\ref{sec:setup} and~\ref{sec:main-results} apply for each individual mode $k=1,\dots, n$. Moreover, because the different Fourier modes are independent, the projected
log-likelihood and the Fisher information are additive. Theorem \ref{thm:main-mle} shows that the MLE $\hat\vartheta_T^{(n)}$ based on the first $n$ observed modes satisfies
\begin{equation*}
  \sqrt{T\sum_{k=1}^n I_k(\vartheta_0)}
  \bigl(\hat\vartheta_T^{(n)}-\vartheta_0\bigr)
  \xrightarrow[]{d}
  \mathcal N(0,1),
  \qquad T\to\infty,
\end{equation*}
where $I_k(\vartheta_0)$ denotes the Fisher information rate of the $k$-th Fourier mode. Similarly to the computations in Section \ref{sec:info-loss}, one can also quantify the information gained at each mode $k=1,\dots, n$. Since $a-\lambda_k$ and $c-\lambda_k$ are of order $k^2$ by the Weyl asymptotic, the corresponding Fisher information rate satisfies $I_k(\vartheta_0)=O(k^{-6})$ and $\sum_{k\in\mathbb{N}} I_k(\vartheta_0)< \infty$. This mirrors the fact that $\vartheta$ is not recoverable in finite time even with access to all modes \citep{Huebner1995}.
Note that in the previous analysis we considered the large time asymptotic for a fixed number of modes. To fully exploit the information inherent to all modes, first letting $n\to\infty$ and then $T\to\infty$ would be required and poses a promising avenue for future research.

\appendix

\section{Appendix}\label{sec:tech-proofs}
We begin by proving technical lemmas that establish the differentiability and boundedness of the Riccati equation, the moment bounds for the filtering process $m_t$ and the mean square differentiability of $m_t$.

\begin{lemma}
  \label{lem:gamma_deriv}
  For each $\vartheta \in \Theta$, $\gamma_t(\vartheta)$ has two derivatives
  with respect to $\vartheta$, which are bounded uniformly in
  $t\ge0$ and $\vartheta\in\bar\Theta$.
\end{lemma}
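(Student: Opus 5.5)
The plan is to avoid the explicit formula \eqref{eq:gamma_exp}, whose denominators $\vartheta^2$ are awkward at $\vartheta=0$. Instead I will work with the Riccati ODE \eqref{eq:cond_var}, using it as a smooth ODE in the parameter. The right-hand side $F(\gamma,\vartheta)=2c\gamma+1-\vartheta^2\gamma^2$ is a polynomial in $(\gamma,\vartheta)$. The initial condition $\gamma_0=0$ does not depend on $\vartheta$.

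\textbf{Step 1: a priori bounds on $\gamma$.} First I bound $\gamma_t(\vartheta)$ uniformly. We have $\gamma_t\ge0$: at $\gamma=0$ the drift equals $1>0$. For an upper bound, note that $\gamma_t$ is monotonically increasing from $0$ towards the stable root $\gamma_\infty(\vartheta)$ of $F(\cdot,\vartheta)$, which follows from comparison for the scalar autonomous ODE. If $\vartheta=0\in\bar\Theta$, then $c<0$ and $\gamma_t(0)\le 1/(2|c|)$. Since $\gamma_\infty(\vartheta)=1/(\sqrt{c^2+\vartheta^2}-c)$ is continuous on the compact set $\bar\Theta$, we get $0\le\gamma_t(\vartheta)\le \bar\gamma<\infty$.

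The key quantity is the linearisation $\partial_\gamma F=2(c-\vartheta^2\gamma_t)$. Since $2(c-\vartheta^2\gamma_\infty)=-2\sqrt{c^2+\vartheta^2}<0$ and $\gamma_t\to\gamma_\infty$ uniformly and exponentially fast (Remark~\ref{rem:on_filter}~\eqref{num:gamma_conv}), there exist $t_0$ and $\delta>0$ with $c-\vartheta^2\gamma_t(\vartheta)\le-\delta$ for all $t\ge t_0$ and $\vartheta\in\bar\Theta$. Here I use that $\sqrt{c^2+\vartheta^2}$ is bounded below by a positive constant on $\bar\Theta$. This is clear unless $c=0$ and $\vartheta=0$, which Assumption~\ref{ass:param_space} excludes: $0\in\bar\Theta$ forces $c<0$.

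\textbf{Step 2: differentiability and the variational equations.} Differentiability in $\vartheta$ (twice, indeed $C^\infty$) follows from smooth dependence of ODE solutions on parameters \citep[Theorem 2.11]{Teschl2012-ODE}. The derivatives $\gamma'=\partial_\vartheta\gamma_t$ and $\gamma''=\partial_\vartheta^2\gamma_t$ satisfy the linear equations
\[
\tfrac{d}{dt}\gamma'=2(c-\vartheta^2\gamma_t)\gamma'-2\vartheta\gamma_t^2,\qquad
\tfrac{d}{dt}\gamma''=2(c-\vartheta^2\gamma_t)\gamma''-2\vartheta^2(\gamma')^2-8\vartheta\gamma_t\gamma'-2\gamma_t^2,
\]
with zero initial data.

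\textbf{Step 3: uniform bounds.} Each equation has the form $\dot y=\alpha_t y+f_t$, so by variation of constants
\[
y_t=\int_0^t e^{\int_s^t\alpha_r dr}f_s\,ds .
\]
The integrating factor satisfies $e^{\int_s^t\alpha_r\,dr}\le e^{2|c|t_0}e^{-2\delta(t-s-t_0)_+}$. This holds because $\alpha_r\le 2|c|$ always and $\alpha_r\le-2\delta$ for $r\ge t_0$. Therefore $|y_t|\le C\sup_s|f_s|$, with $C$ independent of $t$ and $\vartheta$.

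For $\gamma'$ the forcing $2\vartheta\gamma_t^2$ is bounded by Step~1 and boundedness of $\Theta$, so $\gamma'$ is uniformly bounded. For $\gamma''$ the forcing involves only $\gamma$ and $\gamma'$, both already shown bounded, so $\gamma''$ is uniformly bounded as well. The bounds extend to $\bar\Theta$ by the same argument, since nothing used openness.

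The main obstacle is the uniform exponential stability in Step~1: we need $c-\vartheta^2\gamma_t$ to become uniformly negative. If the uniform convergence in Remark~\ref{rem:on_filter}~\eqref{num:gamma_conv} is deemed insufficiently justified at $\vartheta=0$, I would argue separately. Using $\gamma_t\ge0$, we have $c-\vartheta^2\gamma_t\le c<0$ whenever $c<0$. The case $c\ge0$ forces $0\notin\bar\Theta$, so $|\vartheta|$ is bounded below there, and then the explicit formula \eqref{eq:gamma_exp} gives uniform convergence directly.
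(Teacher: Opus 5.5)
Your proposal is correct and follows essentially the same route as the paper. Both arguments use uniform a priori bounds $0\le\gamma_t(\vartheta)\le\sup_{\bar\Theta}\gamma_\infty$, differentiable dependence on $\vartheta$ via Teschl's Theorem 2.11, the same linear variational equations for $\partial_\vartheta\gamma_t$ and $\partial_\vartheta^2\gamma_t$ with zero initial data, and a variation-of-constants estimate exploiting that $2c-2\vartheta^2\gamma_t(\vartheta)$ eventually lies uniformly below a negative constant. The differences are only cosmetic: you get monotonicity of $\gamma_t$ from scalar ODE comparison instead of from the explicit formula \eqref{eq:gamma_exp}, and you bound the integrating factor on $[0,t_0]$ explicitly via $\gamma_t\ge 0$.
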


\begin{proof}
  Let $\gamma_t(\vartheta)$ denote the solution to the scalar Riccati ODE in
  \eqref{eq:cond_var}. Equation \eqref{eq:gamma_exp} shows that, for
  $\vartheta \neq 0$, $\gamma_t(\vartheta)$ is strictly increasing on
  $(0,\infty)$, remains positive for all $t>0$ and satisfies
  \[
    \gamma_t(\vartheta)\to
    \gamma_\infty(\vartheta)\coloneq
    \frac{c+\sqrt{c^{2}+\vartheta^{2}}}{\vartheta^{2}},
    \qquad t\to\infty .
  \]
  For $\vartheta=0$, \eqref{eq:cond_var} reduces to
  $d\gamma_t(0)/dt=2c\gamma_t(0)+1$ with $\gamma_0(0)=0$. Using that $c<0$ by Assumption \ref{ass:param_space} whenever $0 \in \Theta$, this ODE is solved by
  \[
    \gamma_t(0)=\frac{e^{2ct}-1}{2c}
    \xrightarrow{t\to\infty} -\frac{1}{2c}\eqqcolon\gamma_\infty(0)>0.
  \] Since
  $\gamma_\infty(\vartheta)\to -1/(2c)$ as $\vartheta\to0$, the map
  $\gamma_\infty$ extends continuously to any $\bar\Theta$ containing $0$.
  Hence $\gamma_\infty$ is strictly positive on $\bar\Theta$ and attains a
  finite maximum. Let
  $M_\gamma\coloneq\sup_{\vartheta\in\bar\Theta}\gamma_\infty(\vartheta)<\infty$.
  Monotonicity of $\gamma_t(\vartheta)$ and the bound on
  $\gamma_\infty(\vartheta)$ yield
  \begin{equation}\label{eq:bound_gamma}
    0<\gamma_t(\vartheta)\le \gamma_\infty(\vartheta)\le M_\gamma,
    \qquad t>0, \vartheta\in\Theta .
  \end{equation}
  We first derive a uniform bound for
  $v_t(\vartheta)\coloneqq\partial_\vartheta\gamma_t(\vartheta)$.
  By Theorem 2.11 of \citet{Teschl2012-ODE}, differentiating the Riccati equation from \eqref{eq:cond_var} with respect to $\vartheta$ yields
  \[
    \frac{dv_t(\vartheta)}{dt}
    =
    \bigl(2c-2\vartheta^2\gamma_t(\vartheta)\bigr)v_t(\vartheta)
    -2\vartheta\gamma_t(\vartheta)^2,
    \qquad v_0(\vartheta)=0.
  \]
  Solving this equation by variation of constants gives
  \[
    v_t(\vartheta)
    =
    \int_0^t
    \exp\Bigl(\int_s^t
    \bigl(2c-2\vartheta^2\gamma_\tau(\vartheta)\bigr)d\tau\Bigr)
    \bigl(-2\vartheta\gamma_s(\vartheta)^2\bigr)ds,\quad t\ge 0 .
  \]
  By Assumption \ref{ass:param_space} we have $(c,\vartheta)\neq(0,0)$ for $\vartheta\in \bar\Theta$, such that
  \[
    2c-2\vartheta^2\gamma_\infty(\vartheta)
    =
    -2\sqrt{c^2+\vartheta^2}
    \le -2\delta<0,
    \qquad
    \delta\coloneq\inf_{\vartheta\in\bar\Theta}\sqrt{c^2+\vartheta^2}>0.
  \]
  By \eqref{eq:gamma_exp}, $\gamma_t(\vartheta)$ converges to $\gamma_\infty(\vartheta)$ exponentially
  fast uniformly in $\vartheta\in\bar\Theta$. Consequently, there exists a time $0\le T_0<\infty$ such that
  \[
  2c-2\vartheta^2\gamma_t(\vartheta)\le -\delta,
  \qquad \vartheta\in\bar\Theta,\quad t\ge T_0 .
\]
On the finite interval $[0,T_0]$, the coefficient $2c-2\vartheta^2\gamma_t(\vartheta)$ is uniformly bounded from above. Hence, there exists a constant $0<C<\infty$, independent of $t$ and $\vartheta$, such that
\[
  \exp\Bigl(\int_s^t
  \bigl(2c-2\vartheta^2\gamma_\tau(\vartheta)\bigr)d\tau\Bigr)
  \le C e^{-\delta(t-s)},
  \qquad 0\le s\le t, \vartheta\in\bar\Theta .
\]
Combining the previous estimate, \eqref{eq:bound_gamma} and $\vartheta_{\max}\coloneq\sup_{\vartheta\in\bar\Theta}|\vartheta|$, we obtain
  \[
    |v_t(\vartheta)| \le 2\vartheta_{\max}M_\gamma^2C \int_0^t e^{-\delta(t-s)}ds\le \frac{2\vartheta_{\max}M_\gamma^2C}{\delta}.
  \]
We next derive a uniform bound for
  $w_t(\vartheta)\coloneqq\partial_\vartheta v_t(\vartheta)$. We have
  \[
    \frac{dw_t(\vartheta)}{dt}
    =
    \bigl(2c-2\vartheta^2\gamma_t(\vartheta)\bigr)w_t(\vartheta)
    + L_t(\vartheta),
    \quad w_0(\vartheta)=0,
  \]
where
\begin{equation*}
  L_t(\vartheta)
    =
    -2\gamma_t(\vartheta)^2
    -8\vartheta\gamma_t(\vartheta)v_t(\vartheta)
    -2\vartheta^2v_t(\vartheta)^2.    
\end{equation*}
Using the uniform bounds on $\gamma_t(\vartheta)$ and $v_t(\vartheta)$,
it follows that $L_t(\vartheta)$ is uniformly bounded on
$[0,\infty)\times\bar\Theta$. Applying the same variation-of-constants
estimate as in the first-derivative case yields a uniform bound for
$w_t(\vartheta)$ on $[0,\infty)\times\bar\Theta$.
\end{proof}

\begin{lemma}
  \label{lem:m_exp_bounds}
  There exists a constant $0<C<\infty$ such that for all
  $\vartheta_0,\vartheta\in\Theta$ and $t\ge0$,
  \[
    \mathbb{E}_{\vartheta_0}\bigl[m_t^2(\vartheta)\bigr] \le C,
    \qquad
    \mathbb{E}_{\vartheta_0}
    \bigl[(m_t(\vartheta+h)-m_t(\vartheta))^2\bigr] \le Ch^2 .
  \]
\end{lemma}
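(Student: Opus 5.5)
Both bounds come from the linear SDE structure of the filter under $\Prob_{\vartheta_0}$ and a uniform exponential-stability estimate.

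\textbf{First bound.} Under $\Prob_{\vartheta_0}$ the observation satisfies $dX_t=(aX_t+\vartheta_0 Y_t)dt+dW^X_t$. Substituting this into \eqref{eq:filter-eq} gives
\[
dm_t(\vartheta)=\bigl((c-\vartheta^2\gamma_t(\vartheta))m_t(\vartheta)+bX_t+\vartheta\vartheta_0\gamma_t(\vartheta)Y_t\bigr)dt+\vartheta\gamma_t(\vartheta)dW_t^X .
\]
So $(X_t,Y_t,m_t(\vartheta))$ solves a linear SDE $dZ=A(t)Z\,dt+B(t)\,d\mathcal W$. Its drift matrix is block lower triangular, with blocks:
\begin{itemize}
\item the $2\times 2$ block $\begin{pmatrix} a&\vartheta_0\\ b&c\end{pmatrix}$, which is Hurwitz uniformly on $\bar\Theta$ by Assumption~\ref{ass:param_space} (trace $a+c<0$, determinant $ac-b\vartheta_0>0$ bounded away from zero by compactness);
\item the scalar $c-\vartheta^2\gamma_t(\vartheta)$, which converges uniformly and exponentially to $-\sqrt{c^2+\vartheta^2}\le-\delta<0$ by Remark~\ref{rem:on_filter}\,(ii) and the proof of Lemma~\ref{lem:gamma_deriv}.
\end{itemize}
Hence this family (the $\widetilde Z$ family of Remark~\ref{rem:stable-process}) satisfies the hypotheses of Proposition~\ref{prop:aou-propositions}. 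Part~\ref{item:aou-b} gives $\sup_t\sup_{\vartheta_0,\vartheta}\|\Sigma(t,\vartheta_0,\vartheta)\|<\infty$, because $\Sigma(\cdot)$ is continuous on the compact $\bar\Theta^2$ and the deviation is at most $Ce^{-\gamma t}$. This yields $\E_{\vartheta_0}[m_t^2(\vartheta)]\le C$.

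To avoid any circularity with the proposition, I would also argue directly. Variation of constants gives
\[
m_t=\int_0^t \Phi(t,s)\bigl(bX_s+\vartheta\vartheta_0\gamma_sY_s\bigr)ds+\int_0^t\Phi(t,s)\vartheta\gamma_s\,dW^X_s,\qquad |\Phi(t,s)|\le Ce^{-\delta(t-s)} .
\]
Here $\Phi$ is the scalar fundamental solution. Combining $\sup_t\E[X_t^2+Y_t^2]<\infty$ (stable OU), Cauchy–Schwarz with the weight $e^{-\delta(t-s)}$, Itô isometry and \eqref{eq:bound_gamma} gives the bound.

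\textbf{Second bound.} Set $D_t\coloneqq m_t(\vartheta+h)-m_t(\vartheta)$ and subtract the two filter SDEs:
\[
dD_t=(c-(\vartheta+h)^2\gamma_t(\vartheta+h))D_t\,dt+F_t\,dt+\bigl((\vartheta+h)\gamma_t(\vartheta+h)-\vartheta\gamma_t(\vartheta)\bigr)dW^X_t ,
\]
where
\[
F_t=-\bigl((\vartheta+h)^2\gamma_t(\vartheta+h)-\vartheta^2\gamma_t(\vartheta)\bigr)m_t(\vartheta)+\bigl((\vartheta+h)\gamma_t(\vartheta+h)-\vartheta\gamma_t(\vartheta)\bigr)\vartheta_0Y_t .
\]
By the mean value theorem and the uniform bound on $\partial_\vartheta\gamma_t$ from Lemma~\ref{lem:gamma_deriv}, each coefficient difference is at most $C|h|$ uniformly in $t$. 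With the first bound and $\sup_t\E Y_t^2<\infty$, this gives $\E F_t^2\le Ch^2$, and the diffusion coefficient is $O(|h|)$.

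Variation of constants with the decaying scalar propagator $\Phi_h(t,s)$ (again $\le Ce^{-\delta(t-s)}$ uniformly in $h$) then gives
\[
\E D_t^2\le 2\Bigl(\int_0^t Ce^{-\delta(t-s)}ds\Bigr)\int_0^t Ce^{-\delta(t-s)}\E F_s^2\,ds+2\int_0^t C^2e^{-2\delta(t-s)}Ch^2ds\le Ch^2 .
\]

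\textbf{Main obstacle.} The delicate step is getting the propagator bound $|\Phi(t,s)|\le Ce^{-\delta(t-s)}$ uniformly in $\vartheta$ and $h$ over all of $\bar\Theta$. For that I would reuse the argument from the proof of Lemma~\ref{lem:gamma_deriv}:
\begin{itemize}
\item for $t\ge T_0$ the coefficient is at most $-\delta$;
\item on $[0,T_0]$ it is bounded, so the loss is only a constant factor.
\end{itemize}
Everything else is a direct Gronwall/Itô computation.
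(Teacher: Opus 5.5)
Your proof is correct, and its skeleton matches the paper's. Both write the filter SDE under $\Prob_{\vartheta_0}$, use uniform second moments of $(X_t,Y_t)$, get Lipschitz dependence of $\vartheta\gamma_t(\vartheta)$ and $\vartheta^2\gamma_t(\vartheta)$ from Lemma~\ref{lem:gamma_deriv}, and split time at the $T_0$ after which $c-\vartheta^2\gamma_t(\vartheta)$ is uniformly negative. The estimate itself is done differently.

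\textbf{The paper's route.} It applies It\^o's formula to $m_t^2$ and to $\delta_t^2$, absorbs the cross terms with Young's inequality, and closes with a differential Gr\"onwall inequality on $[0,T_0]$ and $[T_0,\infty)$. This energy method needs no fundamental solution. It also extends directly to vector systems through a Lyapunov matrix, as in the proof of Proposition~\ref{prop:aou-propositions}.

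\textbf{Your route.} You write the mild (variation-of-constants) solution with the deterministic scalar propagator and combine a weighted Cauchy--Schwarz inequality with the It\^o isometry. This requires the explicit propagator bound but avoids the $\varepsilon$-absorption. It is also exactly the template the paper uses for $\Pi_t$ in the proof of Lemma~\ref{lem:mean-square-diff}, so it puts both lemmas on the same footing.

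\textbf{The shortcut for the first bound.} Invoking Proposition~\ref{prop:aou-propositions} is legitimate and not circular. That proposition uses only its own hypotheses, and the paper itself borrows \eqref{eq:Z-second-moment} from it for the moments of $X_t,Y_t$. Two caveats:
\begin{itemize}
\item Your family $(X_t,Y_t,m_t(\vartheta))$, driven by $(W^X,W^Y)$, is not literally the $\widetilde Z$ of Remark~\ref{rem:stable-process}. Your block-triangular argument does verify the hypotheses for it directly.
\item The shortcut gives no $h^2$ rate for the increment, so your direct argument is genuinely needed there.
\end{itemize}

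\textbf{A small correction.} Since $\gamma_t(\vartheta)$ increases to $\gamma_\infty(\vartheta)$, the coefficient $c-\vartheta^2\gamma_t(\vartheta)$ approaches $-\sqrt{c^2+\vartheta^2}$ from above. For $t\ge T_0$ you therefore get a bound such as $-\delta/2$ rather than $-\delta$. This changes only constants.
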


\begin{proof}
Under $\Prob_{\vartheta_0}$, expanding the $dX_t$ term in \eqref{eq:filter-eq} gives the SDE
  \[
    dm_t(\vartheta)
    =
    \Bigl(
    (c-\vartheta^2\gamma_t(\vartheta))m_t(\vartheta)
    +bX_t+\vartheta\vartheta_0\gamma_t(\vartheta)Y_t
    \Bigr)dt
    +\vartheta\gamma_t(\vartheta)dW_t^X .
  \]
  Applying It\^{o}'s formula to $f(x)=x^2$ and taking expectations gives
\begin{equation} \label{eq:ito-formula}
  \frac{d}{dt}\mathbb E_{\vartheta_0}\bigl[m_t^2(\vartheta)\bigr]
  =
  \mathbb E_{\vartheta_0}\bigl[
    2bX_tm_t(\vartheta)
    +2\vartheta\vartheta_0\gamma_t(\vartheta)m_t(\vartheta)Y_t
    +2(c-\vartheta^2\gamma_t(\vartheta))m_t^2(\vartheta)
    +\vartheta^2\gamma_t^2(\vartheta)
  \bigr].
\end{equation}
  Young's inequality yields
\begin{align*}
  2|b||\mathbb{E}_{\vartheta_0}[X_t m_t]| 
  &\le \varepsilon\mathbb{E}_{\vartheta_0}[m_t^2] + \frac{b^2}{\varepsilon}\mathbb{E}_{\vartheta_0}[X_t^2], \\
  2|\vartheta\vartheta_0|\gamma_t(\vartheta) |\mathbb{E}_{\vartheta_0}[Y_t m_t]| 
  &\le \varepsilon\mathbb{E}_{\vartheta_0}[m_t^2] + \frac{\vartheta^2\vartheta_0^2\gamma_t(\vartheta)^2}{\varepsilon} \mathbb{E}_{\vartheta_0}[Y_t^2]
\end{align*}
for any $\varepsilon>0$.
  Since $(X_t,Y_t)_{t\ge 0}$ is an Ornstein--Uhlenbeck process with Hurwitz drift
  matrix (see Assumption \ref{ass:param_space}) and zero initial condition, the second
  moments of $X_t$ and $Y_t$ are uniformly bounded on $t\ge0$ for each
  $\vartheta_0\in\bar\Theta$, compare \eqref{eq:Z-second-moment}. Since these moments are continuous in
  $\vartheta_0$, we obtain a uniform bound on $t\ge0$ and the compact set $\bar\Theta$.
  Moreover, $\gamma_t(\vartheta)$ is uniformly bounded in
  $t\ge0$ and $\vartheta\in\bar\Theta$ by Lemma \ref{lem:gamma_deriv}. Substituting these bounds into
  \eqref{eq:ito-formula}, we obtain a constant $0<C_\varepsilon<\infty$,
  independent of $t$, $\vartheta$ and $\vartheta_0$, such that
  \begin{equation}\label{eq:m-squared_bound}
    \frac{d}{dt}\mathbb E_{\vartheta_0}[m_t^2(\vartheta)]
    \le
    2(c-\vartheta^2\gamma_t(\vartheta)+\varepsilon)
    \mathbb E_{\vartheta_0}[m_t^2(\vartheta)]
    +C_\varepsilon .
  \end{equation}
By Remark \ref{rem:on_filter} \eqref{num:gamma_conv},
$c-\vartheta^2\gamma_t(\vartheta)$ converges to $-\sqrt{c^2+\vartheta^2}$ exponentially fast and uniformly on $\bar\Theta$. Since
$\sup_{\vartheta\in\bar\Theta}(-\sqrt{c^2+\vartheta^2})<0$, we can choose
$\lambda<0$, $\varepsilon>0$ and $T_0<\infty$ such that
\[
  2(c-\vartheta^2\gamma_t(\vartheta)+\varepsilon)\le\lambda,
  \qquad t\ge T_0,\vartheta\in\bar\Theta .
\]
Together with \eqref{eq:m-squared_bound}, Grönwall's inequality
\citep[Lemma 2.7]{Teschl2012-ODE} yields a
uniform bound for $\mathbb E_{\vartheta_0}[m_t^2(\vartheta)]$ on
$[T_0,\infty)$.
  
On $[0,T_0]$, the term $2(c-\vartheta^2\gamma_t(\vartheta)+\varepsilon)$
in \eqref{eq:m-squared_bound} is uniformly bounded from above in
$t\in[0,T_0]$ and $\vartheta\in\bar\Theta$. Thus Grönwall's inequality
gives a uniform bound on this interval as well. Combining both intervals, we
obtain for a constant $0<C<\infty$ the bound
\[\sup_{t\ge0, \vartheta,\vartheta_0\in\bar\Theta}
\mathbb{E}_{\vartheta_0}[m_t(\vartheta)^2]\le C.\]

  We next bound the second moment of
  $m_t(\vartheta+h)-m_t(\vartheta)$. Let
  $\delta_t(\vartheta)=m_t(\vartheta+h)-m_t(\vartheta)$. Expanding $dX_t$
  under $\Prob_{\vartheta_0}$ and applying Ito's formula yields
  \[
    \begin{aligned}
    \frac{d}{dt}\mathbb E_{\vartheta_0}[\delta_t^2(\vartheta)]
    =
    \mathbb E_{\vartheta_0}\Bigl[
      &2(c-\vartheta^2\gamma_t(\vartheta))\delta_t^2(\vartheta)
      +2(A_t(\vartheta+h)-A_t(\vartheta))m_t(\vartheta+h)\delta_t(\vartheta) \\
      &+2(C_t(\vartheta+h)-C_t(\vartheta))
      \vartheta_0Y_t\delta_t(\vartheta)
      +(C_t(\vartheta+h)-C_t(\vartheta))^2
    \Bigr],
    \end{aligned}
  \]
  with coefficients $A_t(\vartheta) \coloneq c - \vartheta^2 \gamma_t(\vartheta)$ and $C_t(\vartheta)\coloneq\vartheta \gamma_t(\vartheta)$. Since
  $\gamma_t(\vartheta)$ and its derivative with respect to $\vartheta$ are
  uniformly bounded by Lemma \ref{lem:gamma_deriv}, the functions $A_t(\vartheta)$
  and $C_t(\vartheta)$ have uniformly bounded first derivatives and are
  therefore Lipschitz-continuous with constants $L_A$ and $L_C$. Hence,
  Young's inequality yields
  \begin{equation*}
     \frac{d}{dt}\mathbb E_{\vartheta_0}[\delta_t^2(\vartheta)]
    \le
    2(c-\vartheta^2\gamma_t(\vartheta)+\varepsilon)
    \mathbb E_{\vartheta_0}[\delta_t^2(\vartheta)]
    \!+\frac{h^2L_A^2}{\varepsilon}
    \mathbb E_{\vartheta_0}[m_t^2(\vartheta)]
    +\frac{h^2L_C^2\vartheta_0^2}{\varepsilon}
    \mathbb E_{\vartheta_0}[Y_t^2]+\!h^2L_C^2.     
  \end{equation*}
  The coefficient
  $2(c-\vartheta^2\gamma_t(\vartheta)+\varepsilon)$ is the same as in
  \eqref{eq:m-squared_bound}. Using the uniform bounds on
  $\mathbb E_{\vartheta_0}[Y_t^2]$ and
  $\mathbb E_{\vartheta_0}[m_t^2(\vartheta)]$, the remaining terms are
  bounded by $Ch^2$, with $0<C<\infty$ independent of $t$, $\vartheta$,
  $\vartheta_0$ and $h$. Applying Grönwall's inequality on $[0,T_0]$ and
  on $[T_0,\infty)$ as above implies
  \[
    \mathbb E_{\vartheta_0}
    \bigl[(m_t(\vartheta+h)-m_t(\vartheta))^2\bigr]
    \le Ch^2 .\qedhere
  \]
\end{proof}


\begin{proof}[\textbf{Proof of Lemma \ref{lem:mean-square-diff}}]
\label{prf:m_twice_diff}
  Recall from \eqref{eq:filter-eq} that
  \begin{equation*}
    dm_t(\vartheta) = \bigl(A_t(\vartheta) m_t(\vartheta) + B_t(\vartheta) X_t\bigr) dt + C_t(\vartheta) dX_t
  \end{equation*}
  with coefficients
  \begin{align*}
    A_t(\vartheta) & \coloneqq c - \vartheta^2 \gamma_t(\vartheta), \quad
    B_t(\vartheta) \coloneqq b - \gamma_t(\vartheta) \vartheta a, \quad
    C_t(\vartheta) \coloneqq \vartheta \gamma_t(\vartheta).
  \end{align*}
  Lemma \ref{lem:gamma_deriv} shows that $A$, $B$ and $C$ have two bounded derivatives with respect to $\vartheta$. A second order Taylor expansion gives us
  \begin{align*}
    \frac{A_t(\vartheta+h) - A_t(\vartheta)}{h} - A_t'(\vartheta) & = A_t''(\tilde{\vartheta}_A)\frac{h}{2}, \\
    \frac{B_t(\vartheta+h) - B_t(\vartheta)}{h} - B_t'(\vartheta) & = B_t''(\tilde{\vartheta}_B)\frac{h}{2}, \\
    \frac{C_t(\vartheta+h) - C_t(\vartheta)}{h} - C_t'(\vartheta) & = C_t''(\tilde{\vartheta}_C)\frac{h}{2},
  \end{align*}
  where $\tilde{\vartheta}_A,\tilde{\vartheta}_B,\tilde{\vartheta}_C \in (\vartheta,\vartheta+h)$ and we denote by $Z'$ and $Z''$ the (partial) derivatives of $Z\in \{ A,B,C\}$ with respect to the parameter $\vartheta$.

  \noindent Let $\eta_t(\vartheta)$ be the candidate for the first derivative $\partial_\vartheta m_t(\vartheta)$ given by formally differentiating the equation for $m_t(\vartheta)$:
  \begin{equation}\label{eq:eq_derivative}
    d\eta_t(\vartheta) = \bigl(A_t'(\vartheta) m_t(\vartheta) + A_t(\vartheta) \eta_t(\vartheta) + B_t'(\vartheta) X_t\bigr) dt + C_t'(\vartheta) dX_t.
  \end{equation}
  Consider the process $  \Pi_t(\vartheta) \coloneqq \frac{1}{h}\bigl(m_t(\vartheta+h) - m_t(\vartheta)\bigr) - \eta_t(\vartheta).$ Substituting and rearranging terms, $\Pi_t(\vartheta)$ satisfies the SDE
  \begin{align*}
    d\Pi_t(\vartheta) = & \Bigl( A_t(\vartheta) \Pi_t(\vartheta) + \frac{h}{2} A_t''(\tilde{\vartheta}_A) m_t(\vartheta+h) \Bigr. \Bigl. + A_t'(\vartheta)\bigl(m_t(\vartheta+h)-m_t(\vartheta)\bigr)  \\&\quad + \frac{h}{2} B_t''(\tilde{\vartheta}_B) X_t \Bigr) dt
    + \frac{h}{2} C_t''(\tilde{\vartheta}_C) dX_t.
  \end{align*}
  Applying the variation of constants formula yields an explicit expression for $\Pi_t(\vartheta)$ given by
  \begin{align*}
    \Pi_t(\vartheta) & = \frac{h}{2} \int_0^t \exp\Bigl(\int_s^t A_r(\vartheta)dr\Bigr) A_s''(\tilde{\vartheta}_A) m_s(\vartheta+h) ds  \\
    &\quad+ \int_0^t \exp\Bigl(\int_s^t A_r(\vartheta)dr\Bigr) A_s'(\vartheta) \bigl(m_s(\vartheta+h)-m_s(\vartheta)\bigr) ds \\
                     &\quad+ \frac{h}{2} \int_0^t \exp\Bigl(\int_s^t A_r(\vartheta)dr\Bigr) B_s''(\tilde{\vartheta}_B) X_s ds + \frac{h}{2} \int_0^t \exp\Bigl(\int_s^t A_r(\vartheta)dr\Bigr) C_s''(\tilde{\vartheta}_C) dX_s.
  \end{align*}
After expanding $dX_s$ under $\Prob_{\vartheta_0}$ using \eqref{eq:Xadapted}, squaring $\Pi_t(\vartheta)$, applying $(a+b+c+d)^2\le 4(a^2+b^2+c^2+d^2)$ and taking expectations, we obtain
  \begin{align*}
    \mathbb{E}_{\vartheta_0}[\Pi_t^2(\vartheta)]
     \hspace{-1.1em}&\\
     &\le 4 \mathbb{E}_{\vartheta_0}\Bigl[\Bigl(\frac{h}{2} \int_0^t \exp\Bigl(\int_s^t A_r(\vartheta)dr\Bigr)\bigl(A_s''(\tilde{\vartheta}_A) m_s(\vartheta+h)+C_s''(\tilde{\vartheta}_C)\vartheta_0 m_s(\vartheta_0)\bigr) ds\Bigr)^2\Bigr] \\
     & \quad + 4 \mathbb{E}_{\vartheta_0}\Bigl[\Bigl(\int_0^t \exp\Bigl(\int_s^t A_r(\vartheta)dr\Bigr) A_s'(\vartheta)\bigl(m_s(\vartheta+h)-m_s(\vartheta)\bigr) ds\Bigr)^2\Bigr]                                                          \\
     & \quad + 4 \mathbb{E}_{\vartheta_0}\Bigl[\Bigl(\frac{h}{2} \int_0^t \exp\Bigl(\int_s^t A_r(\vartheta)dr\Bigr)\bigl(B_s''(\tilde{\vartheta}_B)+C_s''(\tilde{\vartheta}_C)a\bigr) X_s ds\Bigr)^2\Bigr]                                   \\
     & \quad + 4 \mathbb{E}_{\vartheta_0}\Bigl[\Bigl(\frac{h}{2} \int_0^t \exp\Bigl(\int_s^t A_r(\vartheta)dr\Bigr) C_s''(\tilde{\vartheta}_C) d\bar{W}_s\Bigr)^2\Bigr].
  \end{align*}
  Recall that $A$, $B$ and $C$ have two bounded derivatives with respect to $\vartheta$ by Lemma \ref{lem:gamma_deriv} and that $\mathbb{E}_{\vartheta_0}[X_t^2]$ is uniformly bounded in $t\ge 0$ by \eqref{eq:Z-second-moment}. In combination with the uniform moment and increment bounds for $m_t(\vartheta)$ from Lemma \ref{lem:m_exp_bounds} and the Itô isometry, we find
  \begin{align}
    \mathbb{E}_{\vartheta_0}[\Pi_t^2(\vartheta)] & \le Ch^2 \Bigl(\int_0^t \exp\Bigl(\int_s^t A_r(\vartheta)dr\Bigr) ds\Bigr)^2 \label{eq:helper_Differentiability}
  \end{align}
  for some $0<C<\infty$.
  Since $A_t(\vartheta)$ converges exponentially fast to $-\sqrt{c^2+\vartheta^2}<0$ by Remark \ref{rem:on_filter} \eqref{num:gamma_conv}, the integral in \eqref{eq:helper_Differentiability} 
  is uniformly bounded for all $t\ge 0$, and $\vartheta,\vartheta_0 \in \Theta$. Hence we find a new constant $0<C<\infty$ such that for all $t\ge 0$ and $\vartheta,\vartheta_0 \in \Theta$, the estimate 
  \[
    \mathbb{E}_{\vartheta_0}\bigl[\Pi_t^2(\vartheta)\bigr] \le Ch^2,
  \]
  holds. Letting $h\to0$ shows that $m_t(\vartheta)$ is mean square differentiable.
\end{proof}

\begin{lemma}
  \label{lem:partial_m_bounds}
  There exist constants $0<C_1,C_2<\infty$, independent of $t$, such that for
  all $\vartheta_0,\vartheta\in\Theta$ and $t\ge0$ the mean square
  derivative $\partial_\vartheta m_t(\vartheta)$ satisfies
  \[
    \mathbb{E}_{\vartheta_0}
    \Bigl[\bigl(\partial_\vartheta m_t(\vartheta)\bigr)^2\Bigr]\le C_1,
    \qquad
    \mathbb{E}_{\vartheta_0}
    \Bigl[
    \bigl(\partial_\vartheta m_t(\vartheta+h)
    -\partial_\vartheta m_t(\vartheta)\bigr)^2
    \Bigr]\le C_2h^2 .
  \]
\end{lemma}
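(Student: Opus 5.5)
We follow the same scheme as in the proof of Lemma \ref{lem:m_exp_bounds}: a second-moment ODE for a linear SDE, controlled by Young's inequality and Grönwall's inequality on $[0,T_0]$ and $[T_0,\infty)$. Write $\eta_t(\vartheta)=\partial_\vartheta m_t(\vartheta)$. We use the coefficients $A_t,B_t,C_t$ from the proof of Lemma \ref{lem:mean-square-diff} and the representation \eqref{eq:eq_derivative}. Expanding $dX_t=(aX_t+\vartheta_0 Y_t)dt+dW_t^X$ under $\Prob_{\vartheta_0}$ gives
\[
d\eta_t(\vartheta)=\bigl(A_t(\vartheta)\eta_t(\vartheta)+A_t'(\vartheta)m_t(\vartheta)+(B_t'(\vartheta)+aC_t'(\vartheta))X_t+\vartheta_0C_t'(\vartheta)Y_t\bigr)dt+C_t'(\vartheta)dW_t^X .
\]
Itô's formula for $\eta_t^2$ and taking expectations yields
\[
\tfrac{d}{dt}\E_{\vartheta_0}[\eta_t^2]=2A_t\E_{\vartheta_0}[\eta_t^2]+2\E_{\vartheta_0}[\eta_t F_t]+(C_t')^2 ,
\]
with forcing $F_t=A_t'm_t+(B_t'+aC_t')X_t+\vartheta_0C_t'Y_t$.

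By Lemma \ref{lem:gamma_deriv}, the derivatives $A_t',B_t',C_t'$ are bounded uniformly in $t$ and $\vartheta$. The second moments of $X_t,Y_t$ are bounded uniformly by \eqref{eq:Z-second-moment}, and those of $m_t$ by Lemma \ref{lem:m_exp_bounds}. Hence $\E_{\vartheta_0}[F_t^2]\le C$. Young's inequality gives $2|\E[\eta_tF_t]|\le\varepsilon\E[\eta_t^2]+\varepsilon^{-1}C$. Since $A_t(\vartheta)\to-\sqrt{c^2+\vartheta^2}\le-\delta<0$ uniformly, we can choose $\varepsilon,T_0$ with $2A_t+\varepsilon\le\lambda<0$ for $t\ge T_0$. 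Grönwall's inequality on $[0,T_0]$ (bounded coefficient) and on $[T_0,\infty)$ (contracting coefficient) then gives the uniform bound $C_1$.

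For the increment, set $\delta_t=\eta_t(\vartheta+h)-\eta_t(\vartheta)$ and subtract the two SDEs. This gives
\[
d\delta_t=\bigl(A_t(\vartheta)\delta_t+G_t\bigr)dt+(C_t'(\vartheta+h)-C_t'(\vartheta))dW_t^X ,
\]
where $G_t$ collects the following terms:
\begin{itemize}
\item $(A_t(\vartheta+h)-A_t(\vartheta))\eta_t(\vartheta+h)$;
\item $(A_t'(\vartheta+h)-A_t'(\vartheta))m_t(\vartheta+h)$;
\item $A_t'(\vartheta)(m_t(\vartheta+h)-m_t(\vartheta))$;
\item the differences of $B_t'+aC_t'$ and $\vartheta_0C_t'$ multiplying $X_t$ and $Y_t$.
\end{itemize}
Lemma \ref{lem:gamma_deriv} gives two bounded $\vartheta$-derivatives of $\gamma_t$, so $A_t,B_t,C_t$ and their first derivatives are Lipschitz with uniform constants. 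Combining this with the first part ($\E[\eta_t^2]\le C_1$) and Lemma \ref{lem:m_exp_bounds} (increment bound $Ch^2$) gives $\E_{\vartheta_0}[G_t^2]\le Ch^2$. The quadratic variation term is likewise at most $Ch^2$. The same Young/Grönwall argument then yields $\E_{\vartheta_0}[\delta_t^2]\le C_2h^2$.

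The main obstacle is regularity. The coefficients $A_t'$ and $C_t'$ involve $\partial_\vartheta\gamma_t$, so their Lipschitz continuity in $\vartheta$ requires the second derivative $\partial_\vartheta^2\gamma_t$. This is exactly why Lemma \ref{lem:gamma_deriv} provides two uniformly bounded derivatives. One must also check that all constants, including $T_0$ and $\lambda$, are uniform over $\vartheta,\vartheta_0\in\bar\Theta$. This follows from the uniform exponential convergence in Remark \ref{rem:on_filter} \eqref{num:gamma_conv}. If needed, $h$ is restricted so that $\vartheta+h\in\Theta$.
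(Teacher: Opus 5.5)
Your proposal is correct and follows the paper's route. The paper's proof only notes that $\partial_\vartheta m_t(\vartheta)$ solves a linear SDE with contracting coefficient $c-\vartheta^2\gamma_t(\vartheta)$ and uniformly bounded, Lipschitz remaining coefficients, then defers to the Young/Grönwall argument of Lemma \ref{lem:m_exp_bounds}; you carry that argument out in detail, including the forcing terms (incidentally $B_t'+aC_t'\equiv 0$, so the $X_t$ term drops out) and the need for $\partial_\vartheta^2\gamma_t$ from Lemma \ref{lem:gamma_deriv} to make $A_t'$ and $C_t'$ Lipschitz.
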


\begin{proof}
From Lemma \ref{lem:mean-square-diff}, $\partial_\vartheta m_t(\vartheta)$ satisfies a linear SDE in which the term multiplying
$\partial_\vartheta m_t(\vartheta)$ is $c-\vartheta^2\gamma_t(\vartheta)$.
The remaining drift and diffusion coefficients are uniformly bounded and Lipschitz in
$\vartheta$ by Lemmas \ref{lem:gamma_deriv} and \ref{lem:m_exp_bounds}. Therefore, the claimed bounds follow
analogously to the proof of Lemma \ref{lem:m_exp_bounds}.
\end{proof}

\begin{proof}[\textbf{Proof of Proposition \ref{prop:aou-propositions}}]
  \phantomsection\label{prf:aou-prop}
We first prove $(\ref{item:aou-b})$ and $(\ref{item:aou-c})$ and then derive $(\ref{item:aou-a})$.
  
  \noindent\textit{Proof of (\ref{item:aou-b}).}
Fix $(\vartheta_0,\vartheta)\in\bar\Theta^{2}$. Define $D_t(\vartheta_0,\vartheta)\coloneq Z_t(\vartheta_0,\vartheta)-\bar Z_t(\vartheta_0,\vartheta)$ and abbreviate
\[
  \Delta A_t(\vartheta_0,\vartheta)\coloneq A(t,\vartheta_0,\vartheta)-A(\vartheta_0,\vartheta),\qquad
  \Delta B_t(\vartheta_0,\vartheta)\coloneq B(t,\vartheta_0,\vartheta)-B(\vartheta_0,\vartheta),\quad t\ge 0.
\]
  The process $D_t(\vartheta_0,\vartheta)$ satisfies the SDE
  \begin{equation}\label{eq:D-dynamics}
     dD_t(\vartheta_0,\vartheta) = A(\vartheta_0,\vartheta)D_t(\vartheta_0,\vartheta) dt
    + \Delta A_t(\vartheta_0,\vartheta)Z_t(\vartheta_0,\vartheta) dt
    + \Delta B_t(\vartheta_0,\vartheta) d\mathcal{W}_t .
  \end{equation}
  Let $\Phi_{\vartheta_0,\vartheta}(t,s)$ be the principal fundamental matrix of the linear system $dx/dt = A(t,\vartheta_0,\vartheta)x$, i.e.
  \[
    \frac{d}{dt} \Phi_{\vartheta_0,\vartheta}(t,s)=A(t,\vartheta_0,\vartheta)\Phi_{\vartheta_0,\vartheta}(t,s),
    \qquad \Phi_{\vartheta_0,\vartheta}(s,s)=\mathbb{I},
  \]
  where $\mathbb{I}$ is the identity matrix. The variation-of-constants formula gives
  \begin{equation}\label{eq:prin_mat-2par}
    \Phi_{\vartheta_0,\vartheta}(t,s)
    = e^{A(\vartheta_0,\vartheta)(t-s)}
    + \int_s^{t} e^{A(\vartheta_0,\vartheta)(t-u)}
    \big(A(u,\vartheta_0,\vartheta)-A(\vartheta_0,\vartheta)\big)
    \Phi_{\vartheta_0,\vartheta}(u,s)du
  \end{equation}
  for all $t\ge s\ge0$.
  After taking norms, the bounds from the assumptions imply
  \[
    \|e^{(t-u)A(\vartheta_0,\vartheta)}\|\le C e^{-\beta(t-u)},\qquad
    \|A(u,\vartheta_0,\vartheta)-A(\vartheta_0,\vartheta)\|\le M e^{-\alpha u}.
  \]
  Hence, we obtain
  \begin{equation}\label{eq:prin_mat2-2par}
    \|\Phi_{\vartheta_0,\vartheta}(t,s)\|
    \le C e^{-\beta(t-s)}
    + \int_s^{t} C e^{-\beta(t-u)}M e^{-\alpha u}\|\Phi_{\vartheta_0,\vartheta}(u,s)\|du .
  \end{equation}
  Set $\psi_{\vartheta_0,\vartheta}(t,s)\coloneq e^{\beta(t-s)}\|\Phi_{\vartheta_0,\vartheta}(t,s)\|$. Multiplying
  \eqref{eq:prin_mat2-2par} by $e^{\beta(t-s)}$ and applying Grönwall's inequality shows
  \[
    \psi_{\vartheta_0,\vartheta}(t,s) \le C \exp\big(\tfrac{CM}{\alpha}\big), \quad t\ge s\ge0.
  \]
  Therefore, for $K \coloneq C \exp\Big(\frac{CM}{\alpha}\Big)$, we obtain the bound
  \begin{equation}\label{eq:matrix_bound-2par}
    \|\Phi_{\vartheta_0,\vartheta}(t,s)\| \le Ke^{-\beta(t-s)},\quad  t\ge s\ge0.
  \end{equation}
  Since $Z_0(\vartheta_0,\vartheta)=0$ we can write
\begin{equation}\label{eq:mild-solution}
  Z_t(\vartheta_0,\vartheta)
  =\int_0^t \Phi_{\vartheta_0,\vartheta}(t,s)B(s,\vartheta_0,\vartheta) d\mathcal{W}_s .
\end{equation}
By the uniform boundedness assumption on $B$  and the bound in
\eqref{eq:matrix_bound-2par}, Itô's isometry gives, uniformly over $t\ge0$ and
$(\vartheta_0,\vartheta)\in\bar\Theta^2$, the estimate
\begin{equation} \label{eq:Z-second-moment}
  \begin{aligned}
    \mathbb E_{\vartheta_0}\bigl[\|Z_t(\vartheta_0,\vartheta)\|^2\bigr]
    &\le d\int_0^t \|\Phi_{\vartheta_0,\vartheta}(t,s)\|^2 \|B(s,\vartheta_0,\vartheta)\|^2 ds \\
    &\le dK^2\bar B^2\int_0^t e^{-2\beta(t-s)} ds \le \frac{dK^2\bar B^2}{2\beta} \eqqcolon M_Z.
  \end{aligned}
\end{equation}
  The solution for the limiting SDE is given by
  \[
    \bar Z_t(\vartheta_0,\vartheta)=\int_0^t e^{(t-s)A(\vartheta_0,\vartheta)}B(\vartheta_0,\vartheta) d\mathcal{W}_s.
  \]
  Hence, the same computation as for $Z_t$ yields the bound
  \begin{equation}\label{eq:barZ-second-moment}
    \sup_{(\vartheta_0,\vartheta)\in \bar \Theta^2}\sup_{t\ge0}\mathbb E_{\vartheta_0}\|\bar Z_t(\vartheta_0,\vartheta)\|^2 \le  \frac{dC^2 \bar B^2}{2\beta} \eqqcolon  M_{ \bar Z}.
  \end{equation}

Since $A(\vartheta_0,\vartheta)$ is Hurwitz, Theorem 4.6 of \citet{khalil2002nonlinear} ensures that for each $(\vartheta_0,\vartheta)$ there exists a unique positive definite matrix $P(\vartheta_0,\vartheta)$ satisfying
  \begin{equation}\label{eq:lyapunov-PQ}
    A(\vartheta_0,\vartheta)^\top P(\vartheta_0,\vartheta)
    +P(\vartheta_0,\vartheta)A(\vartheta_0,\vartheta)
    =-\mathbb{I}.
  \end{equation}
By the continuity of $A(\vartheta_0,\vartheta)$ and the uniqueness of $P(\vartheta_0,\vartheta)$, the map $(\vartheta_0,\vartheta)\mapsto P(\vartheta_0,\vartheta)$ is continuous. Hence, by compactness of $\bar{\Theta}^{2}$, the eigenvalues satisfy the uniform bounds
\[
  0 < p_{\min} \le \lambda_{\min}\bigl(P(\vartheta_0,\vartheta)\bigr)
  \le \lambda_{\max}\bigl(P(\vartheta_0,\vartheta)\bigr) \le p_{\max}.
\]
  Let $V_t(\vartheta_0,\vartheta)\coloneq D_t(\vartheta_0,\vartheta)^\top P(\vartheta_0,\vartheta)D_t(\vartheta_0,\vartheta)$. Applying Itô's formula to $V_t(\vartheta_0,\vartheta)$, substituting the dynamics in \eqref{eq:D-dynamics} and taking the expectation yields
\begin{align}\begin{split}
    \frac{d}{dt}\mathbb{E}_{\vartheta_0}\big[V_t(\vartheta_0,\vartheta)\big]
     & = \mathbb{E}_{\vartheta_0}\Big[
         D_t(\vartheta_0,\vartheta)^\top
         \big(A(\vartheta_0,\vartheta)^\top P(\vartheta_0,\vartheta)
         + P(\vartheta_0,\vartheta)A(\vartheta_0,\vartheta)\big)
         D_t(\vartheta_0,\vartheta)\Big] \\
     & \quad + \mathbb{E}_{\vartheta_0}\Big[2
             D_t(\vartheta_0,\vartheta)^\top
             P(\vartheta_0,\vartheta)
             \Delta A_t(\vartheta_0,\vartheta)
             Z_t(\vartheta_0,\vartheta)\Big] \\
     & \quad + \mathbb{E}_{\vartheta_0}\Big[\operatorname{tr}\big(
             P(\vartheta_0,\vartheta)
             \Delta B_t(\vartheta_0,\vartheta)
             \Delta B_t(\vartheta_0,\vartheta)^\top
             \big)\Big].
  \end{split}
  \label{eq:lyapunov-exp}
\end{align}
  Applying Young's inequality with $\varepsilon>0$, using the uniform bound on $\mathbb E_{\vartheta_0}[\|Z_t(\vartheta_0,\vartheta)\|^2]$ from \eqref{eq:Z-second-moment} and the exponential bounds
  $\|\Delta A_t(\vartheta_0,\vartheta)\| \le M e^{-\alpha t}$ and $\|\Delta B_t(\vartheta_0,\vartheta)\|\le M e^{-\alpha t}$, we deduce
\begin{equation*}
  \begin{aligned}
    &\mathbb{E}_{\vartheta_0}\Big[2 D_t(\vartheta_0,\vartheta)^\top P(\vartheta_0,\vartheta) \Delta A_t(\vartheta_0,\vartheta) Z_t(\vartheta_0,\vartheta) + \operatorname{tr}\big(P(\vartheta_0,\vartheta) \Delta B_t(\vartheta_0,\vartheta) \Delta B_t(\vartheta_0,\vartheta)^\top\big)\Big] \\
    &\qquad \le \varepsilon\frac{\mathbb{E}_{\vartheta_0}\big[V_t(\vartheta_0,\vartheta)\big]}{p_{\min}} + C_1e^{-2\alpha t}.
  \end{aligned}
\end{equation*}
Using \eqref{eq:lyapunov-PQ}, the uniform positive definiteness of
$P(\vartheta_0,\vartheta)$ and the preceding
estimate, we obtain
\begin{equation}
  \frac{d}{dt}\mathbb{E}_{\vartheta_0}\bigl[V_t(\vartheta_0,\vartheta)\bigr]
  \le
  -\left(\frac{1}{p_{\max}}-\frac{\varepsilon}{p_{\min}}\right)
  \mathbb{E}_{\vartheta_0}\bigl[V_t(\vartheta_0,\vartheta)\bigr]
  +C_1e^{-2\alpha t}.
\end{equation}
  Choose $\varepsilon>0$ such that
$\lambda_0\coloneqq 1/p_{\max}-\varepsilon/p_{\min}>0$. For any $0<\lambda<\min\{\lambda_0,2\alpha\}$, Grönwall's inequality then yields, uniformly in $(\vartheta_0,\vartheta)\in\bar{\Theta}^{2}$, the estimate
\[
  \mathbb{E}_{\vartheta_0}\bigl[V_t(\vartheta_0,\vartheta)\bigr]
  \le C_1\int_0^t e^{-\lambda_0(t-s)}e^{-2\alpha s} ds
  \le C_2e^{-\lambda t}.
\]
Since $\|D_t(\vartheta_0,\vartheta)\|^2\le p_{\min}^{-1}V_t(\vartheta_0,\vartheta)$, we obtain
\begin{equation}\label{eq:D-L2-decay}
  \mathbb{E}_{\vartheta_0}\bigl[\|D_t(\vartheta_0,\vartheta)\|^2\bigr]
  \le C_3e^{-\lambda t},
\end{equation}
 uniformly in $t\ge0$ and $(\vartheta_0,\vartheta)\in\bar\Theta^{2}$. Define
  \[
    \bar\Sigma(t,\vartheta_0,\vartheta)
    \coloneq\mathbb{E}_{\vartheta_0}\big[\bar Z_t(\vartheta_0,\vartheta)\bar Z_t(\vartheta_0,\vartheta)^\top\big].
  \]
Since $\bar Z_0(\vartheta_0,\vartheta)=0$ we have the representations
  \begin{equation}\label{eq:covariance-representation}
  \begin{aligned}
      \bar\Sigma(t,\vartheta_0,\vartheta)
    &=\int_0^t e^{sA(\vartheta_0,\vartheta)}B(\vartheta_0,\vartheta)B(\vartheta_0,\vartheta)^\top
    e^{sA(\vartheta_0,\vartheta)^\top}ds,\quad t\ge 0, \\
    \Sigma(\vartheta_0,\vartheta) &=\int_0^\infty e^{sA(\vartheta_0,\vartheta)}B(\vartheta_0,\vartheta)B(\vartheta_0,\vartheta)^\top
    e^{sA(\vartheta_0,\vartheta)^\top}ds.    
  \end{aligned}
  \end{equation}
  Using the assumptions $\|e^{sA(\vartheta_0,\vartheta)}\|\le C e^{-\beta s}$ and $\|B(\vartheta_0,\vartheta)\|\le \bar B$, we obtain
  \begin{equation}\label{eq:sigma_error}
    \big\|\Sigma(\vartheta_0,\vartheta)-\bar\Sigma(t,\vartheta_0,\vartheta)\big\|
    \le \int_t^\infty C^2 \bar B^{2} e^{-2\beta s}ds
    = \frac{C^2 \bar B^{2}}{2\beta}e^{-2\beta t}.
  \end{equation}
  We have the decomposition
  \begin{equation}\label{eq:decomposition-sigma}
    \Sigma(t,\vartheta_0,\vartheta)-\Sigma(\vartheta_0,\vartheta)
    =\Big(\Sigma(t,\vartheta_0,\vartheta)-\bar\Sigma(t,\vartheta_0,\vartheta)\Big)
    +\Big(\bar\Sigma(t,\vartheta_0,\vartheta)-\Sigma(\vartheta_0,\vartheta)\Big).
  \end{equation}
  Let $D_t(\vartheta_0,\vartheta)\coloneq Z_t(\vartheta_0,\vartheta)-\bar Z_t(\vartheta_0,\vartheta)$. Adding and subtracting the cross term $\bar Z_t Z_t^\top$ yields
  \begin{align*}
    \Sigma(t,\vartheta_0,\vartheta)-\bar\Sigma(t,\vartheta_0,\vartheta)
     & = \mathbb{E}_{\vartheta_0}\Big[D_t(\vartheta_0,\vartheta)D_t(\vartheta_0,\vartheta)^\top\Big]
    + \mathbb{E}_{\vartheta_0}\Big[D_t(\vartheta_0,\vartheta)\bar Z_t(\vartheta_0,\vartheta)^\top\Big] \\ &\quad+ \mathbb{E}_{\vartheta_0}\Big[\bar Z_t(\vartheta_0,\vartheta)D_t(\vartheta_0,\vartheta)^\top\Big].
  \end{align*}
  Applying the triangle inequality, submultiplicativity of the norm and the Cauchy--Schwarz inequality yields
\begin{equation}\label{eq:sigma_error2}
  \begin{aligned}
    \big\|\Sigma(t,\vartheta_0,\vartheta)-\bar\Sigma(t,\vartheta_0,\vartheta)\big\|
    &\le \mathbb{E}_{\vartheta_0}\big[\|D_t(\vartheta_0,\vartheta)\|^2\big] \\
    &\quad + 2\Big(\mathbb{E}_{\vartheta_0}\big[\|D_t(\vartheta_0,\vartheta)\|^2\big]\Big)^{1/2}
    \Big(\mathbb{E}_{\vartheta_0}\big[\|\bar Z_t(\vartheta_0,\vartheta)\|^2\big]\Big)^{1/2}.
  \end{aligned}
\end{equation}
  Combining the bounds from \eqref{eq:sigma_error} and \eqref{eq:sigma_error2} with the decomposition in \eqref{eq:decomposition-sigma} and the bounds in \eqref{eq:D-L2-decay} and \eqref{eq:barZ-second-moment} yields
  \[
    \sup_{(\vartheta_0,\vartheta)\in\bar{\Theta}^{2}}
    \big\|\Sigma(t,\vartheta_0,\vartheta)-\Sigma(\vartheta_0,\vartheta)\big\|
    \le C e^{-\lambda t/2} + \frac{C^2 \bar B^{2}}{2\beta}e^{-2\beta t}
    \le C'e^{-\gamma t},
  \] where $\gamma\coloneq\min\{\lambda/2,2\beta\}$.

  We proceed by showing that $(\vartheta_0,\vartheta)\mapsto \Sigma(\vartheta_0,\vartheta)$ is continuous on $\bar{\Theta}^{2}$.
  To this end, let $\bar{\Theta}^2\supset(\vartheta_0^{(n)},\vartheta^{(n)})\to(\vartheta_0,\vartheta)$ in $\bar{\Theta}^{2}$ for $n\to\infty$.
  By the continuity of $(\vartheta_0,\vartheta)\mapsto A(\vartheta_0,\vartheta)$ and $(\vartheta_0,\vartheta)\mapsto B(\vartheta_0,\vartheta)$, we obtain for each $s\ge 0$ that
\begin{align*}
&e^{sA(\vartheta_0^{(n)},\vartheta^{(n)})}B(\vartheta_0^{(n)},\vartheta^{(n)})B(\vartheta_0^{(n)},\vartheta^{(n)})^\top e^{sA(\vartheta_0^{(n)},\vartheta^{(n)})^\top} \\
  &\quad \to e^{sA(\vartheta_0,\vartheta)}B(\vartheta_0,\vartheta)B(\vartheta_0,\vartheta)^\top e^{sA(\vartheta_0,\vartheta)^\top}.
\end{align*}
  Moreover, by assumption, for all $n$ and all $s\ge 0$,
  \[
    \Bigl\|e^{sA(\vartheta_0^{(n)},\vartheta^{(n)})}B(\vartheta_0^{(n)},\vartheta^{(n)})B(\vartheta_0^{(n)},\vartheta^{(n)})^\top
    e^{sA(\vartheta_0^{(n)},\vartheta^{(n)})^\top}\Bigr\|
    \le C^2\bar B^{2}e^{-2\beta s}.
  \]
  Consequently, the integrand in \eqref{eq:covariance-representation} is dominated by an integrable function on $[0,\infty)$ and the dominated convergence theorem yields the continuity of $(\vartheta_0,\vartheta)\mapsto
\Sigma(\vartheta_0,\vartheta)$  on $\bar{\Theta}^{2}$.

\noindent\textit{Proof of (\ref{item:aou-c}).}
Let $\Phi_{\vartheta_0,\vartheta}(t,s)$ again denote the principal fundamental
matrix of $dx/dt=A(t,\vartheta_0,\vartheta)x$. Using the representation
\eqref{eq:mild-solution} of $Z_t(\vartheta_0,\vartheta)$, the semigroup
property, Itô isometry and independence of Brownian increments give, for
$t\ge s$,
\[
  \operatorname{Cov}_{\vartheta_0}\bigl(Z_t(\vartheta_0,\vartheta),Z_s(\vartheta_0,\vartheta)\bigr)
  =
  \Phi_{\vartheta_0,\vartheta}(t,s)
  \operatorname{Cov}_{\vartheta_0}\bigl(Z_s(\vartheta_0,\vartheta),Z_s(\vartheta_0,\vartheta)\bigr).
\]
Hence, using \eqref{eq:matrix_bound-2par} and \eqref{eq:Z-second-moment}, we obtain 
\[
  \bigl\|\operatorname{Cov}_{\vartheta_0}\bigl(Z_t(\vartheta_0,\vartheta),Z_s(\vartheta_0,\vartheta)\bigr)\bigr\|
  \le K e^{-\beta(t-s)} M_Z,
\]
uniformly in
$(\vartheta_0,\vartheta)\in\bar\Theta^{2}$ and $t\ge s\ge0$.
Similarly, we find 
\[
  \operatorname{Cov}_{\vartheta_0}\bigl(Z_t(\vartheta_0,\vartheta),Z_s(\vartheta_0,\vartheta)\bigr)
  =
  \operatorname{Cov}_{\vartheta_0}\bigl(Z_t(\vartheta_0,\vartheta),Z_t(\vartheta_0,\vartheta)\bigr)
  \Phi_{\vartheta_0,\vartheta}(s,t)^\top,\quad s\ge t,
\]
and the same bound holds with $s-t$ in place of $t-s$. Combining both cases gives
\begin{equation*}
  \sup_{(\vartheta_0,\vartheta)\in\bar\Theta^{2}}
  \bigl\|\operatorname{Cov}_{\vartheta_0}\bigl(Z_t(\vartheta_0,\vartheta),Z_s(\vartheta_0,\vartheta)\bigr)\bigr\|
  \le C e^{-\beta|t-s|}
\end{equation*}
for all $t,s\ge0$.

\noindent\textit{Proof of (\ref{item:aou-a}).} Recall that $h(z)=z^\top \mathcal M z+\mathcal K^\top z + \mathcal C$ with a symmetric matrix $\mathcal M\in\mathbb{R}^{d\times d}$. Since $Z_t(\vartheta_0,\vartheta)$ is centred Gaussian, from \ref{prop:aou-propositions} \eqref{item:aou-b}, we obtain a constant $C$ depending on $\mathcal M$, such that
\begin{equation} \label{eq:bias}
\begin{aligned}
&\sup_{(\vartheta_0,\vartheta)\in\bar\Theta^2}
\left|
\frac1T\int_0^T \mathbb E_{\vartheta_0}\bigl[h(Z_t(\vartheta_0,\vartheta))\bigr]dt
-
\mathbb E_{\mu_{\vartheta_0,\vartheta}}\bigl[h(\bar Z(\vartheta_0,\vartheta))\bigr]
\right| \\
&\quad =
\sup_{(\vartheta_0,\vartheta)\in\bar\Theta^2}
\left|
\frac1T\int_0^T
\operatorname{tr}\bigl(\mathcal M(\Sigma(t,\vartheta_0,\vartheta)-\Sigma(\vartheta_0,\vartheta))\bigr)dt
\right| \\
&\quad \le
\frac{\|\mathcal M\|}{T}\int_0^T
\sup_{(\vartheta_0,\vartheta)\in\bar\Theta^2}
\bigl\|\Sigma(t,\vartheta_0,\vartheta)-\Sigma(\vartheta_0,\vartheta)\bigr\|dt
\le \frac{C}{T}.
\end{aligned}
\end{equation}
Using the exponential decay in \ref{prop:aou-propositions} \eqref{item:aou-c} and Isserlis's theorem \citep{10.1093/biomet/12.1-2.134}, we can show that covariances of $h(Z_t(\vartheta_0,\vartheta))$ decay exponentially. Indeed, after expanding $h$, the mixed linear-quadratic terms vanish because $(Z_t,Z_s)$ is centred Gaussian, while the quadratic-quadratic terms are reduced by Isserlis's theorem to products of covariances between $Z_t$ and $Z_s$. Hence, all terms are controlled by covariances between $Z_t$ and $Z_s$, which decay exponentially by \ref{prop:aou-propositions} \eqref{item:aou-c}. Therefore, for a constant $C$ depending on $\mathcal M$ and $\mathcal K$, we obtain
\[\bigl|
  \operatorname{Cov}_{\vartheta_0}
  \bigl(
  h(Z_t(\vartheta_0,\vartheta)),
  h(Z_s(\vartheta_0,\vartheta))
  \bigr)
  \bigr|
  \le Ce^{-\beta |t-s|}.
\]
Hence,
\begin{equation}\label{eq:variance}
\begin{aligned}
&\sup_{(\vartheta_0,\vartheta)\in\bar\Theta^2}
\operatorname{Var}_{\vartheta_0}
\left(
\frac{1}{T}\int_0^T h(Z_t(\vartheta_0,\vartheta))dt
\right)\le
\frac{1}{T^2}\int_0^T\int_0^T Ce^{-\beta |t-s|}dsdt
\le \frac{2C}{\beta T}.
\end{aligned}
\end{equation}
Combining \eqref{eq:bias} and \eqref{eq:variance} with the Cauchy--Schwarz inequality, we obtain, for a new constant $C<\infty$
\begin{equation}\label{eq:OU-moment-bound}
\sup_{(\vartheta_0,\vartheta)\in\bar{\Theta}^{2}}
\mathbb{E}_{\vartheta_0}\biggl[
\biggl|
\frac{1}{T}\int_0^T h\bigl(Z_t(\vartheta_0,\vartheta)\bigr)dt
-\mathbb E_{\mu_{\vartheta_0,\vartheta}}\bigl[h(\bar Z(\vartheta_0,\vartheta))\bigr]
\biggr|
\biggr]
\le C\Bigl(\frac{1}{\sqrt{T}}+\frac{1}{T}\Bigr)\xrightarrow[T\to \infty]{}0,
\end{equation}
which completes the proof.
\end{proof}
\subsection{Fisher Information and Identifiability}\label{subsec:FisherInformation_Derivation}
Next, we prove the mean square differentiability of the log-likelihood function, the uniform convergence of the Fisher information and the uniform separation property required for identifiability.

\begin{proof}[\textbf{Proof of Lemma \ref{lem:score-derivation}}]
\phantomsection\label{prf:score-derivation}
  Let $g_t(\vartheta)=\vartheta m_t(\vartheta)$. From Lemma \ref{lem:mean-square-diff}, Lemmas \ref{lem:m_exp_bounds} and \ref{lem:partial_m_bounds} and \eqref{eq:bound_rest}, we find
  \begin{equation}\label{eq:g-bounds}
    \E_{\vartheta_0}\left[(g_t(\vartheta+h)-g_t(\vartheta))^4\right]\le Ch^4, \quad \E_{\vartheta_0}\left[\left(\frac{g_t(\vartheta+h)-g_t(\vartheta)}{h}-\partial_{\vartheta}g_t(\vartheta)\right)^2\right]\le Ch^2.
  \end{equation}
  Expanding the $dX_t$ term under $\Prob_{\vartheta_0}$, we obtain
  \begin{align}
    \begin{split}
      \frac{\ell_T(\vartheta_0+h)-\ell_T(\vartheta_0)}{h}-S_T(\vartheta_0)
       & =\int_0^T\left(\frac{g_t(\vartheta_0+h)-g_t(\vartheta_0)}{h}-\partial_{\vartheta}g_t(\vartheta_0)\right)d\bar W_t \\
       & \quad-\int_0^T\frac{(g_t(\vartheta_0+h)-g_t(\vartheta_0))^2}{2h}dt\\
       &\eqqcolon A_h + B_h.
    \end{split}\label{eq:diffquot_minus_score_theta0_cancelled}
  \end{align}
  It suffices to show that
  $\E_{\vartheta_0}[A_h^2]\to 0$ and $\E_{\vartheta_0}[B_h^2]\to 0$ as $h\to 0$.
  We begin with the stochastic integral term $A_h$. By It\^o's isometry and \eqref{eq:g-bounds}, we have
\begin{equation*}
  \E_{\vartheta_0}[A_h^2] = \E_{\vartheta_0}\Bigl[\int_0^T \Bigl(\frac{g_t(\vartheta_0+h)-g_t(\vartheta_0)}{h}-\partial_{\vartheta}g_t(\vartheta_0)\Bigr)^2 dt\Bigr] \le \int_0^T C h^2 dt = CTh^2 \xrightarrow[h\to 0]{} 0.
\end{equation*}
  We next analyse the drift term.
  To control the drift $B_h$, we use the bound in \eqref{eq:g-bounds} to obtain
  \begin{align*}
    \E_{\vartheta_0}[B_h^2]
     &= \E_{\vartheta_0}\Bigg[\Big(\int_0^T \frac{\bigl(g_t(\vartheta_0+h)-g_t(\vartheta_0)\bigr)^2}{2h}dt\Big)^2\Bigg]\\
     &\le \frac{T}{4h^2}\int_0^T \E_{\vartheta_0}\left[\bigl(g_t(\vartheta_0+h)-g_t(\vartheta_0)\bigr)^4\right]dt\\
     &     \le \frac{T}{4h^2}\int_0^T Ch^4dt
    = \frac{CT^2}{4}h^2 \xrightarrow[h\to 0]{}0 .
  \end{align*}
  Combining the two previous bounds completes the proof.\qedhere

\end{proof}

\begin{proof}[\textbf{Proof of Proposition \ref{prop:uniform-fisher}}]
\phantomsection\label{prf:uniform-fisher}
We first show that $I(\vartheta_0)\in(0,\infty)$ for any $\vartheta_0\in K$.
  Recalling the expression for $\Sigma$ from \eqref{eq:Limiting_Covariance}, we obtain
  \[
    I(\vartheta_0)
    = v(\vartheta_0)^\top \Sigma(\vartheta_0)v(\vartheta_0)
    = \int_0^\infty \bigl\|B(\vartheta_0)^\top e^{A(\vartheta_0)^\top s}v(\vartheta_0)\bigr\|^2ds.
  \]
  From the expression for $B(t,\vartheta_0)$ in \eqref{eq:full_proc} and the convergence of $\gamma_t(\vartheta_0) \to \gamma_{\infty}(\vartheta_0)$ as $t\to\infty$ (compare Remark \ref{rem:on_filter} \eqref{num:gamma_conv}), we find that for $\vartheta_0\neq 0$ holds
  \begin{equation*}
    \bigl\|B(\vartheta_0)^\top v(\vartheta_0)\bigr\|^2
    = \frac{\vartheta_0^2}{c^2+\vartheta_0^2} \neq 0.
  \end{equation*}
  Consequently, the integrand is non-zero for $s$ in a neighbourhood of zero and therefore $I(\vartheta_0)>0$. For $\vartheta_0=0$, it holds that $I(0)=\E_{0}[m(0)^2]$, where $m(0)$ denotes the stationary limit of $m_t(0)$. The SDE for $m_t(0)$ from \eqref{eq:filter-eq} is driven by the noisy process $X_t$ through the drift term $bX_t$ with $b\neq 0$ (Assumption \ref{ass:param_space}). Hence, $m(0)$ is almost surely not a constant and $I(0)>0$. Moreover, $\vartheta_0\mapsto \Sigma(\vartheta_0)$ is continuous as a consequence of Proposition \ref{prop:aou-propositions}\ref{item:aou-b}. Since $K$ is compact, this ensures that $0<\inf_{\vartheta_0\in K}I(\vartheta_0)\le \sup_{\vartheta_0\in K}I(\vartheta_0)<\infty$.
  
  \noindent We proceed with the convergence statement. Define $h_t(\vartheta_0) \coloneqq \mathbb{E}_{\vartheta_0}\big[(m_t(\vartheta_0)+\vartheta_0\partial_\vartheta m_t(\vartheta_0))^2\big]$, which is the integrand of the Fisher information.
  The process $\big(X_t,m_t(\vartheta_0),\partial_\vartheta m_t(\vartheta_0)\big)_{t\ge 0}$ is centred and satisfies the assumptions of Proposition \ref{prop:aou-propositions} (see Remark \ref{rem:stable-process}). By Proposition \ref{prop:aou-propositions}\ref{item:aou-b} its covariance $\Sigma(t,\vartheta_0)$ converges uniformly on $K$ to the stationary covariance matrix $\Sigma_{\infty}(\vartheta_0)$ of the limiting process, such that
  \[
    h_t(\vartheta_0)
    =\tilde{v}(\vartheta_0)^\top\Sigma(t,\vartheta_0)\tilde{v}(\vartheta_0)
    \xrightarrow[t\to\infty]{} \tilde{v}(\vartheta_0)^\top\Sigma_{\infty}(\vartheta_0)\tilde{v}(\vartheta_0)=
    I(\vartheta_0)
  \]
  uniformly in $\vartheta_0\in K$ for $\tilde{v}(\vartheta_0)\coloneq(0,1,\vartheta_0)^\top$. Since, in addition, $\sup_{t\ge0,\vartheta_0\in K} h_t(\vartheta_0)<\infty$ by Lemmas \ref{lem:m_exp_bounds} and \ref{lem:partial_m_bounds}, the time averages of $h_t(\vartheta_0)$ converge uniformly to $I(\vartheta_0)$ for $\vartheta_0\in K$.
\end{proof}

\begin{lemma}\label{lem:obs_inf_conv}
We define the observed information at $\vartheta$ by
\begin{equation}
    J_T(\vartheta)=\int_0^T \bigl(m_t(\vartheta)+\vartheta\partial_{\vartheta}m_t(\vartheta)\bigr)^2dt.
\end{equation}
Then the observed information evaluated at the MLE satisfies
\begin{equation*}
    \frac{1}{T}J_T(\hat{\vartheta}_T)\xrightarrow[]{\Prob_{\vartheta_0}}I(\vartheta_0).
\end{equation*}
\end{lemma}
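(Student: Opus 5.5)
The plan is to split
\[
\frac1T J_T(\hat\vartheta_T)-I(\vartheta_0)=\Bigl(\frac1T J_T(\hat\vartheta_T)-\frac1T J_T(\vartheta_0)\Bigr)+\Bigl(\frac1T J_T(\vartheta_0)-I(\vartheta_0)\Bigr)
\]
and to handle the two brackets separately. We write $f_t(\vartheta)\coloneq m_t(\vartheta)+\vartheta\partial_\vartheta m_t(\vartheta)=\partial_\vartheta g_t(\vartheta)$.

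\emph{Second bracket.} Since $J_T(\vartheta_0)=\int_0^T f_t(\vartheta_0)^2dt$, the integrand is $h(Z_t(\vartheta_0))$ for the quadratic form $h(z)=(v(\vartheta_0)^\top z)^2$, where $Z_t(\vartheta_0)$ is the process from Remark \ref{rem:stable-process}. Proposition \ref{prop:aou-propositions}\eqref{item:aou-a} then gives convergence of $\frac1T J_T(\vartheta_0)$ to $v(\vartheta_0)^\top\Sigma(\vartheta_0)v(\vartheta_0)=I(\vartheta_0)$ in $L^1(\Prob_{\vartheta_0})$.

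\emph{First bracket.} Here we use a Lipschitz-type control in $\vartheta$ together with consistency of $\hat\vartheta_T$ (Theorem \ref{thm:main-mle}(i)). The obstacle is that $\hat\vartheta_T$ is random, so a pointwise-in-$\vartheta$ bound is not enough; we need a bound uniform in $\vartheta$ on a shrinking neighbourhood of $\vartheta_0$. Writing $a^2-b^2=(a-b)(a+b)$ and applying Cauchy--Schwarz yields
\[
\sup_{|\vartheta-\vartheta_0|\le\delta}\frac1T\bigl|J_T(\vartheta)-J_T(\vartheta_0)\bigr|\le\Bigl(\sup_{|\vartheta-\vartheta_0|\le\delta}\frac1T\int_0^T(f_t(\vartheta)-f_t(\vartheta_0))^2dt\Bigr)^{1/2}\Bigl(\sup_{|\vartheta-\vartheta_0|\le\delta}\frac1T\int_0^T(f_t(\vartheta)+f_t(\vartheta_0))^2dt\Bigr)^{1/2}.
\]
To control the supremum over $\vartheta$ we represent the increment via a second mean square derivative $\partial_\vartheta f_t$:
\[
f_t(\vartheta)-f_t(\vartheta_0)=\int_{\vartheta_0}^{\vartheta}\partial_\vartheta f_s\,ds \quad\text{(in the parameter)}.
\]
Its existence, with $\sup_{t,\vartheta}\E_{\vartheta_0}[(\partial_\vartheta f_t(\vartheta))^2]<\infty$, follows as in Lemmas \ref{lem:mean-square-diff} and \ref{lem:partial_m_bounds}. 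One differentiates \eqref{eq:filter_Derivative} once more, notes that the equation is linear with the stable coefficient $c-\vartheta^2\gamma_t(\vartheta)$, and observes that $\partial^2_\vartheta\gamma_t$ is bounded by Lemma \ref{lem:gamma_deriv}. (A bound on $\partial_\vartheta^3\gamma_t$ would be needed for a clean remainder, but only the second moment of the second derivative is used here.)

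Given this, Jensen's inequality in the parameter variable gives
\[
\sup_{|\vartheta-\vartheta_0|\le\delta}\int_0^T(f_t(\vartheta)-f_t(\vartheta_0))^2dt\le\delta\int_{\vartheta_0-\delta}^{\vartheta_0+\delta}\int_0^T(\partial_\vartheta f_t(\eta))^2\,dt\,d\eta .
\]
The expectation of the right-hand side is at most $C\delta^2T$. Similarly, $\sup_\vartheta\frac1T\int_0^T f_t(\vartheta)^2dt$ is bounded in $L^1$ uniformly in $T$ by Lemmas \ref{lem:m_exp_bounds} and \ref{lem:partial_m_bounds}, combined with the same integral representation. Hence the first bracket restricted to $\{|\hat\vartheta_T-\vartheta_0|\le\delta\}$ is $O_{\Prob}(\delta)$ uniformly in $T$. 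The complementary event has vanishing probability by consistency. Letting first $T\to\infty$ and then $\delta\to0$ concludes the argument.

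The main obstacle is justifying the uniform-in-$\vartheta$ control, i.e.\ a jointly measurable version of $\vartheta\mapsto f_t(\vartheta)$ for which the pathwise fundamental theorem of calculus holds. If this causes trouble, the fallback is Kolmogorov's continuity criterion applied in $\vartheta$ to the Gaussian field $\vartheta\mapsto T^{-1}\int_0^T f_t(\vartheta)^2dt$, using the $L^2$-Lipschitz bound from Lemma \ref{lem:partial_m_bounds} and Gaussian hypercontractivity to upgrade to higher moments. This yields a modulus-of-continuity estimate uniform in $T$.
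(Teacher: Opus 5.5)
Your proposal is correct. It uses the same decomposition as the paper and treats $\frac1T J_T(\vartheta_0)-I(\vartheta_0)$ the same way, via Proposition \ref{prop:aou-propositions}\ref{item:aou-a}. The difference is in how you control $\frac1T J_T(\hat\vartheta_T)-\frac1T J_T(\vartheta_0)$.

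The paper never differentiates in $\vartheta$ a second time. It proves directly the bound $\E_{\vartheta_0}[(J_T(\vartheta_1)-J_T(\vartheta_2))^2]\le CT^2|\vartheta_1-\vartheta_2|^2$, using only the first-order estimates of Lemmas \ref{lem:m_exp_bounds} and \ref{lem:partial_m_bounds}, Cauchy--Schwarz, and Gaussianity to reduce fourth moments to second moments. It then applies the Garsia--Rodemich--Rumsey inequality to obtain a random H\"older constant of exponent $s-1/2$ for $\vartheta\mapsto\frac1T J_T(\vartheta)$, stochastically bounded uniformly in $T$, and concludes by consistency.

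That is essentially your fallback, and hypercontractivity is not needed there. With a one-dimensional parameter, the exponent $2$ in $|\vartheta_1-\vartheta_2|^2$ already exceeds the dimension, so Kolmogorov or GRR with second moments applies directly. Also, $\vartheta\mapsto\frac1T J_T(\vartheta)$ is a second-chaos field, not a Gaussian one.

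Your main route gives a Lipschitz-order local modulus, $O_{\Prob}(\delta)$ uniformly in $T$. The price is two ingredients absent from the paper. First, you need a second mean-square derivative $\partial_\vartheta^2 m_t$ with $\sup_{t,\vartheta}\E_{\vartheta_0}[(\partial_\vartheta^2 m_t(\vartheta))^2]<\infty$; this is obtainable since $\gamma_t$ is smooth in $\vartheta$. Second, you need a version of $\vartheta\mapsto f_t(\vartheta)$ on which the fundamental theorem of calculus holds pathwise. The second point is easy to supply. Since $X_0=0$ and the coefficients in \eqref{eq:filter-eq} are deterministic and smooth in $(t,\vartheta)$, integration by parts gives $m_t(\vartheta)=\vartheta\gamma_t(\vartheta)X_t+\int_0^t K_\vartheta(t,s)X_s\,ds$ with a kernel smooth in $\vartheta$. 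This is a pathwise smooth version whose derivatives agree almost surely with the mean-square ones.

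The paper's GRR step also tacitly needs a continuous version, so neither route avoids the version issue entirely. The GRR route, however, needs only first-order information in $\vartheta$.
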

\begin{proof}
We first prove a Hölder bound for $\vartheta\mapsto T^{-1}J_T(\vartheta)$ with a random constant that is stochastically bounded. Write $g_t(\vartheta)=m_t(\vartheta)+\vartheta\partial_{\vartheta}m_t(\vartheta)$, then we have
\begin{align*}
\E_{\vartheta_0}\bigl[(J_T(\vartheta_1)-J_T(\vartheta_2))^2\bigr]
&=\E_{\vartheta_0}\biggl[\biggl(\int_0^T \bigl(g_t(\vartheta_1)-g_t(\vartheta_2)\bigr)\bigl(g_t(\vartheta_1)+g_t(\vartheta_2)\bigr)dt\biggr)^2\biggr]\\
&\le T\int_0^T \E_{\vartheta_0}\bigl[\bigl(g_t(\vartheta_1)-g_t(\vartheta_2)\bigr)^4\bigr]^{1/2}\E_{\vartheta_0}\bigl[\bigl(g_t(\vartheta_1)+g_t(\vartheta_2)\bigr)^4\bigr]^{1/2}dt \\
&\le CT^2|\vartheta_1-\vartheta_2|^2,
\end{align*}
from Lemmas \ref{lem:m_exp_bounds} and \ref{lem:partial_m_bounds} and the fact that $g_t(\vartheta_1)-g_t(\vartheta_2)$ and $g_t(\vartheta_1)+g_t(\vartheta_2)$ are both Gaussian. We can then apply the quadratic Garsia--Rodemich--Rumsey inequality \citep[Appendix~A, Cor.~A.2 with $q=2,\alpha=s$]{Friz_Victoir_2010} for $\frac{1}{T}J_T(\vartheta)$. 
For $s\in(1/2,1)$, this gives 
\begin{equation}
\biggl|\frac{1}{T}J_T(\vartheta_1)-\frac{1}{T}J_T(\vartheta_2)\biggr|\le K_s \mathcal J_T(s)^{1/2}|\vartheta_1-\vartheta_2|^{s-1/2}, \label{eq:auxHölder}
\end{equation} 
where 
\begin{equation*} \mathcal J_T(s)=\iint_{\Theta \times \Theta}\frac{\bigl|\frac{1}{T}J_T(u)-\frac{1}{T}J_T(v)\bigr|^2}{|u-v|^{1+2s}}dudv. 
\end{equation*} 
Moreover, since  $s<1$, we have
\begin{equation*} 
\E_{\vartheta_0}[\mathcal J_T(s)]\le C\iint_{\Theta\times\Theta}|u-v|^{1-2s}dudv<\infty,
\end{equation*} 
such that $K_s\mathcal J_T(s)^{1/2}$ is stochastically bounded. Now write 
\begin{equation} \frac{1}{T}J_T(\hat{\vartheta}_T)-I(\vartheta_0)=\frac{1}{T}J_T(\vartheta_0)-I(\vartheta_0)+\biggl(\frac{1}{T}J_T(\hat{\vartheta}_T)-\frac{1}{T}J_T(\vartheta_0)\biggr). \label{eq:aux_Hölder_Decomposition}
\end{equation} 
The first term in \eqref{eq:aux_Hölder_Decomposition} converges to $0$ in $\Prob_{\vartheta_0}$ by Proposition~\ref{prop:aou-propositions}\ref{item:aou-a}. For the second term, the Hölder bound \eqref{eq:auxHölder} gives
\begin{equation*}
    \biggl|\frac{1}{T}J_T(\hat{\vartheta}_T)-\frac{1}{T}J_T(\vartheta_0)\biggr|
    \le K_s\mathcal J_T(s)^{1/2}|\hat{\vartheta}_T-\vartheta_0|^{s-1/2}.
\end{equation*}
Since $K_s\mathcal J_T(s)^{1/2}=O_{\Prob_{\vartheta_0}}(1)$ is stochastically bounded and the MLE is consistent, the second term in \eqref{eq:aux_Hölder_Decomposition} converges to $0$ in $\Prob_{\vartheta_0}$ and we conclude the proof.
\end{proof}

\label{sec:ident}
\begin{proposition}\label{prop:uniform-separation}
  For $\lambda>0$, there exist constants $K(\lambda)>0$ and $T_0(\lambda)<\infty$ such that for all $t\ge T_0(\lambda)$,
  \[
    \inf_{\vartheta_0\in \Theta} \inf_{\substack{\vartheta\in \Theta\\ |\vartheta-\vartheta_0|\ge \lambda}}
    \mathbb{E}_{\vartheta_0}\left[(\vartheta m_t(\vartheta)-\vartheta_0 m_t(\vartheta_0))^2\right]
    \ge K(\lambda).
  \]
\end{proposition}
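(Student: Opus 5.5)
Apply Proposition \ref{prop:aou-propositions} to the process $\widetilde Z_t(\vartheta_0,\vartheta)=(X_t,m_t(\vartheta_0),m_t(\vartheta))$ from Remark \ref{rem:stable-process}, extended by $Y_t$ where convenient. By part \eqref{item:aou-b},
\[
\mathbb{E}_{\vartheta_0}\bigl[(\vartheta m_t(\vartheta)-\vartheta_0 m_t(\vartheta_0))^2\bigr]=w^\top\Sigma(t,\vartheta_0,\vartheta)w,\qquad w=(0,-\vartheta_0,\vartheta)^\top,
\]
and this converges to $D(\vartheta_0,\vartheta)\coloneq w^\top\Sigma(\vartheta_0,\vartheta)w$ with error at most $Ce^{-\gamma t}$, uniformly on $\bar\Theta^2$. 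The map $D$ is continuous, and the set $\{(\vartheta_0,\vartheta)\in\bar\Theta^2:|\vartheta-\vartheta_0|\ge\lambda\}$ is compact. It therefore suffices to prove that $D(\vartheta_0,\vartheta)>0$ on this set. Setting $K(\lambda)$ to half the minimum of $D$ and choosing $T_0(\lambda)$ so that $Ce^{-\gamma T_0}\le K(\lambda)$ then gives the claim.

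Positivity of $D$ is the identifiability step, and I expect it to be the main obstacle. I would argue by contradiction. Suppose $D(\vartheta_0,\vartheta)=0$ with $\vartheta\ne\vartheta_0$. Then, in the stationary limit regime (limiting coefficients $\gamma_\infty$, stationary Gaussian law), we have $\vartheta m(\vartheta)=\vartheta_0 m(\vartheta_0)$ almost surely for all times, since the process is stationary. Under the stationary law, $X$ satisfies $dX_t=(aX_t+\vartheta_0 m_t(\vartheta_0))dt+d\bar W_t$. Hence the stationary law of $X$ would also solve the innovation representation corresponding to parameter $\vartheta$, because the drifts coincide. This means the stationary Gaussian laws of $X$ under $\vartheta_0$ and $\vartheta$ coincide, so the spectral densities of $X$ agree.

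The computation then reduces to comparing spectral densities. From \eqref{eq:XYsystem} with $Y$ eliminated, the spectral density of $X$ is
\[
f_\vartheta(\omega)=\frac{\omega^2+c^2+\vartheta^2}{|(i\omega-a)(i\omega-c)-b\vartheta|^2}.
\]
The denominator equals $(ac-b\vartheta-\omega^2)^2+\omega^2(a+c)^2$. Equating $f_\vartheta$ and $f_{\vartheta_0}$ for all $\omega$ and comparing polynomial coefficients in $\omega^2$ should force $\vartheta^2=\vartheta_0^2$ from the numerator and $b\vartheta=b\vartheta_0$ from the denominator. Since $b\neq0$ by Assumption \ref{ass:param_space}, this gives $\vartheta=\vartheta_0$, a contradiction. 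The care needed here is to rule out common-factor cancellations between numerator and denominator. I would handle this by cross-multiplying and comparing the $\omega^6$, $\omega^4$ and $\omega^2$ coefficients directly, which are affine in $\vartheta^2$ and $b\vartheta$.

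A cleaner alternative avoids the spectral argument. I would write $D$ as
\[
D(\vartheta_0,\vartheta)=\int_0^\infty\bigl\|B^\top e^{A^\top s}w\bigr\|^2\,ds,
\]
as in the proof of Proposition \ref{prop:uniform-fisher}. Then $D=0$ iff $w$ lies in the unobservable subspace of the pair $(A^\top,B^\top)$, which is a finite Kalman rank check on the explicit $3\times3$ (or $4\times4$ with $Y$) limiting matrices. I would keep this rank computation as a fallback, because it uses only the explicit $\gamma_\infty$. The cost is messier algebra, since the cases $\vartheta_0=0$ or $\vartheta=0$ have to be handled separately via $\gamma_\infty(0)=-1/(2c)$.
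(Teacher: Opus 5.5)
\textbf{Where the proposal breaks.} Your reduction is the same as the paper's: uniform convergence of $w^\top\Sigma(t,\vartheta_0,\vartheta)w$ from Proposition~\ref{prop:aou-propositions}, continuity of the limit, and compactness of $\{|\vartheta-\vartheta_0|\ge\lambda\}\subset\bar\Theta^2$ leave only $D(\vartheta_0,\vartheta)>0$. Your step from $D=0$ to equal stationary laws of $X$ under $\vartheta_0$ and $\vartheta$ is also sound: $(X,m(\vartheta))$ then solves the steady-state innovation system of parameter $\vartheta$, whose drift matrix has rows $(a,\vartheta)$, $(b,c)$ and is Hurwitz.

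The gap is in the closing algebra. Put $u=\omega^2$ and $D_\vartheta(u)=u^2+(a^2+c^2+2b\vartheta)u+(ac-b\vartheta)^2$. Consider the cross-multiplied identity $(u+c^2+\vartheta^2)D_{\vartheta_0}(u)=(u+c^2+\vartheta_0^2)D_\vartheta(u)$.
\begin{itemize}
\item The $\omega^6$ coefficient is trivial.
\item The $\omega^4$ coefficient only gives $(\vartheta-\vartheta_0)(\vartheta+\vartheta_0-2b)=0$.
\item The $\omega^2$ coefficient is not affine, since it contains $(ac-b\vartheta)^2$. On the branch $\vartheta=2b-\vartheta_0$ it reduces to $(b-\vartheta_0)^2=a(a+c)$, which has admissible solutions.
\end{itemize}
For instance, $a=-1$, $c=-10$, $b=1$, $\Theta=(-3,5)$ satisfy Assumption~\ref{ass:param_space}. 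The pair $\vartheta_0=1+\sqrt{11}$, $\vartheta=1-\sqrt{11}$ gives identical $\omega^6$, $\omega^4$ and $\omega^2$ coefficients. So the three comparisons you list do not force $\vartheta=\vartheta_0$.

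Naive numerator/denominator matching fails as well, and not only hypothetically. If $b^2=a(a+c)$, the parameters $0$ and $2b$ give the same rational function $1/(\omega^2+a^2)$. Only the stability condition excludes this pair.

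\textbf{How to repair it, and how the paper argues.} You need the constant coefficient together with Assumption~\ref{ass:param_space} applied to both parameters. Take the branch $\vartheta+\vartheta_0=2b$ with $\vartheta\neq\vartheta_0$. The constant-coefficient identity $(c^2+\vartheta^2)(ac-b\vartheta_0)^2=(c^2+\vartheta_0^2)(ac-b\vartheta)^2$, after dividing by $2b(\vartheta-\vartheta_0)\neq0$, becomes
$(ac-b\vartheta_0)^2+(c^2+\vartheta_0^2)(ac-b^2)=0$.
Adding $ac-b\vartheta>0$ and $ac-b\vartheta_0>0$ gives $ac>b^2>0$. Hence $c\neq0$ and the left-hand side is strictly positive, a contradiction. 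With this repair your route works. It even shows that the stationary law of $X$ alone identifies $\vartheta$, at the cost of a case analysis that genuinely needs the Hurwitz condition.

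The paper avoids spectral algebra altogether. It plugs $g\coloneqq\vartheta_0 m(\vartheta_0)=\bar\vartheta m(\bar\vartheta)$ into the two stationary filter equations, which gives two decompositions of $dg$ against the same $dX$.
\begin{itemize}
\item Matching the $dX$-coefficients gives $\vartheta_0^2\gamma_\infty(\vartheta_0)=\bar\vartheta^2\gamma_\infty(\bar\vartheta)$, i.e.\ $c+\sqrt{c^2+\vartheta_0^2}=c+\sqrt{c^2+\bar\vartheta^2}$, hence $\vartheta_0^2=\bar\vartheta^2$.
\item Matching the drifts then gives $b\vartheta_0X=b\bar\vartheta X$, and $b\neq0$ concludes.
\end{itemize}
Your Kalman-rank fallback is a correct criterion, but as written it remains unexecuted.
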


\begin{proof} For $(\vartheta_0,\vartheta)\in\bar\Theta^2$, let
  $\bigl(m(\vartheta_0),m(\vartheta)\bigr)$ denote a random vector distributed
  according to the stationary marginal law associated with $\bigl(m_t(\vartheta_0),m_t(\vartheta)\bigr)$ as $t\to\infty$, whose existence is ensured by Proposition \ref{prop:aou-propositions} and Remark \ref{rem:stable-process}. Let $g_t(\vartheta)\coloneq\vartheta m_t(\vartheta)$, $g(\vartheta)\coloneq\vartheta m(\vartheta)$ and
  define \[
    \mathcal A_\lambda \coloneq\{(\vartheta_0,\vartheta)\in \bar{\Theta}^2: |\vartheta-\vartheta_0|\ge \lambda\}.
  \]
  Set $F_t(\vartheta_0,\vartheta)\coloneq\mathbb{E}_{\vartheta_0}\left[(g_t(\vartheta)-g_t(\vartheta_0))^2\right]$ for $(\vartheta_0,\vartheta)\in\bar{\Theta}^2$. From Remark \ref{rem:stable-process} we know that $\bigl(X_t,\allowbreak m_t(\vartheta_0),\allowbreak m_t(\vartheta)\bigr)$ satisfies the assumptions of Proposition \ref{prop:aou-propositions}, such that \eqref{item:aou-b} of this proposition yields the uniform convergence of the covariance matrix of the process $\bigl(X_t,\allowbreak m_t(\vartheta_0),\allowbreak m_t(\vartheta)\bigr)$ to the covariance matrix of the limiting process. Since $F_t(\vartheta_0,\vartheta)$ is a quadratic function of the covariance, there exists a continuous limit function on $\bar{\Theta}^2$ such that
    \begin{equation}\label{eq:uniform-conv}
    \sup_{(\vartheta_0,\vartheta)\in \bar{\Theta}^2}
    \bigl|F_t(\vartheta_0,\vartheta)-F(\vartheta_0,\vartheta)\bigr|
    \xrightarrow[t\to\infty]{}0,\quad F(\vartheta_0,\vartheta)
    \coloneq\mathbb{E}_{\vartheta_0}\left[(g(\vartheta_0)-g(\vartheta))^2\right].
  \end{equation}
  We first show that, for fixed $\vartheta_0$, the map
  $\vartheta\mapsto -\E_{\vartheta_0}[(g(\vartheta)-g(\vartheta_0))^2]$
  is uniquely maximised at $\vartheta_0$. At $\vartheta=\vartheta_0$ the value is zero, so $\vartheta_0$ is a maximiser. If $\bar\vartheta$ were another maximiser, then
  $\mathbb E_{\vartheta_0}[(g(\bar\vartheta)-g(\vartheta_0))^2]=0$ and $\bar\vartheta m(\bar\vartheta)=\vartheta_0 m(\vartheta_0)$ almost surely.

  If $\vartheta_0\neq 0$, then $m(\vartheta_0)=\bar\vartheta/\vartheta_0m(\bar\vartheta)$ almost surely.
  Plugging this relation into the limiting ($t \to \infty$) filter equation from \eqref{eq:filter-eq} yields the identities $\bar\vartheta^2=\vartheta_0^2$ and $b=b\bar\vartheta/\vartheta_0$. Since $b\neq 0$ from Assumption \ref{ass:param_space},
  this forces $\bar\vartheta=\vartheta_0$.

  If $\vartheta_0=0$, the equality reduces to $\bar\vartheta m(\bar\vartheta)=0$ almost surely. By \eqref{eq:filter-eq}, the limiting SDE for $m_t(\bar\vartheta)$ has a nonzero diffusion coefficient whenever $\bar\vartheta\neq 0$. Hence, $m(\bar\vartheta)$ cannot be almost surely zero for $\bar\vartheta\neq 0$. Therefore, we find $\bar\vartheta=0=\vartheta_0$.

  Thus, for $\vartheta_0\neq \vartheta$, we obtain $F(\vartheta_0,\vartheta)=\mathbb{E}_{\vartheta_0}\left[(g(\vartheta_0)-g(\vartheta))^2\right]>0$.
  Since $F$ is continuous and $\mathcal A_\lambda$ is compact (a closed bounded set), $F$ attains a strictly positive minimum on $\mathcal A_\lambda$, namely $K_0(\lambda)\coloneq\inf_{(\vartheta_0,\vartheta)\in \mathcal A_\lambda} F(\vartheta_0,\vartheta)>0$. 
  
  By the uniform convergence stated in \eqref{eq:uniform-conv}, there exists $T_0(\lambda)$ such that, for all $t\ge T_0(\lambda)$ holds
  \[
    \sup_{(\vartheta_0,\vartheta)\in \mathcal A_\lambda}
    \bigl|F_t(\vartheta_0,\vartheta)-F(\vartheta_0,\vartheta)\bigr| \le \tfrac12 K_0(\lambda).
  \]
  Therefore, for all $(\vartheta_0,\vartheta)\in \mathcal A_\lambda$ and $t\ge T_0(\lambda)$, we obtain
  \[
    F_t(\vartheta_0,\vartheta) \ge F(\vartheta_0,\vartheta)-\tfrac12 K_0(\lambda) \ge \tfrac12 K_0(\lambda).
  \]
  Setting $K(\lambda)\coloneq\tfrac12 K_0(\lambda)$ yields
  \[
    \inf_{(\vartheta_0,\vartheta)\in \mathcal A_\lambda} F_t(\vartheta_0,\vartheta) \ge K(\lambda),
    \qquad t\ge T_0(\lambda).
  \]
  Since $\{(\vartheta_0,\vartheta)\in \Theta^2: |\vartheta-\vartheta_0|\ge\lambda\}\subset \mathcal A_\lambda$, the same lower bound holds for
  \[
    \inf_{\vartheta_0\in \Theta} \inf_{\substack{\vartheta\in\Theta\\|\vartheta-\vartheta_0|\ge\lambda}} F_t(\vartheta_0,\vartheta)
  \]
  and the proof is completed.
\end{proof}

\noindent\textbf{Information Ratio.} \phantomsection\label{sec:information_ratio_conv}
We briefly recall the statistics of the fully observed system $(X^T, Y^T)$. Since the parameter $\vartheta$ only appears in the drift of $X_t$, Theorem 7.19 of \citet{LiptserShiryaev2001} yields the full-data log-likelihood 
\begin{equation*}
    \log L_{T,X,Y}(\vartheta) = \vartheta \int_0^T Y_t(dX_t  - a X_t dt) - \frac{1}{2}\vartheta^2\int_0^T Y_t^2 dt. 
\end{equation*}
Differentiating with respect to $\vartheta$ and equating to zero provides the full-data MLE
\begin{equation*}
    \hat{\vartheta}_{T,X,Y} = \frac{\int_0^T Y_t dX_t - a\int_0^T X_t Y_t dt}{\int_0^T Y_t^2 dt}. 
\end{equation*}
The corresponding full-data score function under $\mathbb{P}_{\vartheta_0}$ is given by $S_{X,Y,T}(\vartheta_0) = \int_0^T Y_t dW_t^X$, which immediately gives the full-data Fisher information $I_{X,Y,T}(\vartheta_0) = \mathbb{E}_{\vartheta_0}[\int_0^T Y_t^2 dt]$. 

Solving \eqref{eq:lyapunov-covariance} for $\Sigma(\vartheta_0)$, we obtain the following limits.

\noindent (i) As $|\vartheta_0|\to\infty$, we have
\begin{align*}
  \bigl[\Sigma(\vartheta_0)\bigr]_{2,2}\to-\frac{1}{2(a+c)},\qquad
  \bigl[\Sigma(\vartheta_0)\bigr]_{3,3}\to-\frac{1}{2(a+c)}, \\
  \bigl[\Sigma(\vartheta_0)\bigr]_{3,4}\to\frac{1}{2(a+c)\vartheta_0},\qquad
  \bigl[\Sigma(\vartheta_0)\bigr]_{4,4}\to-\frac{1}{2(a+c)\vartheta_0^{2}}.
\end{align*}
Substituting these into \eqref{eq:information_repr} yields
\[
\begin{aligned}
  I_X(\vartheta_0)
  &=
  \bigl[\Sigma(\vartheta_0)\bigr]_{3,3}
  + 2\vartheta_0 \bigl[\Sigma(\vartheta_0)\bigr]_{3,4}
  + \vartheta_0^2 \bigl[\Sigma(\vartheta_0)\bigr]_{4,4}
  \to 0, \\
  I_{X,Y}(\vartheta_0)
  &=\bigl[\Sigma(\vartheta_0)\bigr]_{2,2}
  \to -\frac{1}{2(a+c)},
\end{aligned}
\]
as $|\vartheta_0|\to\infty$.
Hence, we obtain
\[
  R(\vartheta_0)=\frac{I_X(\vartheta_0)}{I_{X,Y}(\vartheta_0)}\to0,\quad |\vartheta_0|\to\infty.
\]

\noindent (ii) Note that
\[
  \bigl[\Sigma(\vartheta_0)\bigr]_{2,2}
  =\frac{b}{2(a+c)\vartheta_0}-\frac{1}{2c},\qquad
  \bigl[\Sigma(\vartheta_0)\bigr]_{3,3}
  =\frac{b}{2(a+c)\vartheta_0}-\frac{\gamma_\infty^2(\vartheta_0)\vartheta_0^2}{2c},
\]
such that $\bigl[\Sigma(\vartheta_0)\bigr]_{2,2}$ and $\bigl[\Sigma(\vartheta_0)\bigr]_{3,3}$ grow linearly in $b$, while
$\bigl[\Sigma(\vartheta_0)\bigr]_{2,2}-\bigl[\Sigma(\vartheta_0)\bigr]_{3,3}$ remains constant as |$b|\to\infty$. Moreover, by \eqref{eq:lyapunov-covariance}, $\bigl[\Sigma(\vartheta_0)\bigr]_{3,4}$ and $\bigl[\Sigma(\vartheta_0)\bigr]_{4,4}$ converge to finite constants independent of $b$ as well. We conclude that
\[
  R(\vartheta_0)=\frac{I_X(\vartheta_0)}{I_{X,Y}(\vartheta_0)}\to1
  \quad |b|\to\infty.
\]

\noindent (iii) As $c\to-\infty$, using \eqref{eq:lyapunov-covariance} we obtain
\[
  \bigl[\Sigma(\vartheta_0)\bigr]_{2,2}
  =-\frac{1}{2c}-\frac{b^2}{2ac^2}+O\left(\frac{1}{c^{3}}\right) \to 0,\qquad
  \bigl[\Sigma(\vartheta_0)\bigr]_{3,3}
  =-\frac{b^2}{2ac^2}+O\left(\frac{1}{c^{3}}\right) \to 0.
\]
We also find that
$\bigl[\Sigma(\vartheta_0)\bigr]_{3,4}=\mathcal{O}(c^{-3})$ and $\bigl[\Sigma(\vartheta_0)\bigr]_{4,4}=\mathcal{O}(c^{-3})$. Hence, we obtain
\[
  I_X(\vartheta_0)=O\left(\frac{1}{c^{2}}\right),
  \qquad
  I_{X,Y}(\vartheta_0)=O\left(\frac{1}{|c|}\right),
\]
and
\[
  R(\vartheta_0)=\frac{I_X(\vartheta_0)}{I_{X,Y}(\vartheta_0)}
  =O\left(\frac{1}{|c|}\right)\xrightarrow[c\to-\infty]{} 0.
\]

\noindent (iv) As $a\to-\infty$, using \eqref{eq:lyapunov-covariance} we obtain
\[
  \bigl[\Sigma(\vartheta_0)\bigr]_{2,2}
  \to-\frac{1}{2c},\qquad
  \bigl[\Sigma(\vartheta_0)\bigr]_{3,3}
  \to-\frac{\vartheta_0^2\gamma_\infty^2(\vartheta_0)}{2c},
\]
and
\[
  \bigl[\Sigma(\vartheta_0)\bigr]_{3,4}
  \to
  \frac{ -\dfrac{\vartheta_0^3\gamma_\infty^3(\vartheta_0)}{2c}
    -\vartheta_0^{3}\gamma_\infty(\vartheta_0)\Bigl(\gamma_\infty(\vartheta_0)+\gamma_\infty'(\vartheta_0)\Bigr)+b \Bigl(\gamma_\infty(\vartheta_0)+\vartheta_0\gamma_\infty'(\vartheta_0)\Bigr) }
  {2c-\vartheta_0^{2}\gamma_\infty(\vartheta_0)}.
\]
Putting $\bigl[\Sigma(\vartheta_0)\bigr]_{3,4}$ into the equation of $\bigl[\Sigma(\vartheta_0)\bigr]_{4,4}$ in \eqref{eq:lyapunov-covariance}, we find that $\bigl[\Sigma(\vartheta_0)\bigr]_{4,4}$ also converges to some constant. Hence, $I_X(\vartheta_0)$ and $I_{X,Y}(\vartheta_0)$ both converge to constants depending on $b$, $c$ and $\vartheta_0$ and the ratio converges to some constant $\rho\in(0,1)$.

\section*{Acknowledgments}
The authors would like to thank Markus Reiß for many fruitful discussions.

\section*{Funding}
S.G. acknowledges funding by the
Deutsche Forschungsgemeinschaft (DFG, German Research Foundation) – CRC/TRR 388
"Rough Analysis, Stochastic Dynamics and Related Fields" – Project ID 516748464.

\noindent H.M.G. was funded by the Deutsche Forschungsgemeinschaft
(DFG, German Research Foundation) -- SFB 1294,
Project A01 "Statistics for Stochastic Partial Differential Equations" -- Project-ID 318763901.

\bibliographystyle{plainnat}
\bibliography{references}

@article{blathNewCoalescentSeedbank2016a,
  title = {A New Coalescent for Seed-Bank Models},
  author = {Blath, Jochen and Gonz{\'a}lez Casanova, Adri{\'a}n and Kurt, Noemi and {Wilke-Berenguer}, Maite},
  year = {2016},
  journal = {The Annals of Applied Probability},
  volume = {26},
  number = {2},
  issn = {1050-5164},
  doi = {10.1214/15-AAP1106},
}

@book{klenkeWahrscheinlichkeitstheorie2020,
  title = {{Wahrscheinlichkeitstheorie}},
  author = {Klenke, Achim},
  year = {2020},
  publisher = {Springer},
  address = {Berlin Heidelberg},
  doi= {10.1007/978-3-662-62089-2},
}

@article{kurisakiParameterEstimationErgodic2023,
  title = {Parameter Estimation for Ergodic Linear {{SDEs}} from Partial and Discrete Observations},
  author = {Kurisaki, Masahiro},
  year = {2023},
  journal = {Statistical Inference for Stochastic Processes},
  volume = {26},
  number = {2},
  pages = {279--330},
  issn = {1387-0874, 1572-9311},
  doi = {10.1007/s11203-023-09288-w},
}

@article{kutoyantsParameterEstimationHidden2021,
author = {Yury A. Kutoyants and Li Zhou},
title = {{On parameter estimation of the hidden Gaussian process in perturbed SDE.}},
volume = {15},
journal = {Electronic Journal of Statistics},
number = {1},
publisher = {Institute of Mathematical Statistics and Bernoulli Society},
pages = {211 -- 234},
year = {2021},
doi = {10.1214/20-EJS1788},
}

@article{Huebner1995,
  title = {On Asymptotic Properties of Maximum Likelihood Estimators for Parabolic Stochastic PDEs},
  author = {Huebner, M. and Rozovskii, Boris L.},
  year = {1995},
  journal = {Probability Theory and Related Fields},
  volume = {103},
  pages = {143--163},
}

@book{murrayMathematicalBiologyII2003,
  title = {Mathematical {{Biology}}: {{II}}: {{Spatial Models}} and {{Biomedical Applications}}},
  shorttitle = {Mathematical {{Biology}}},
  editor = {Murray, J. D.},
  year = {2003},
  series = {Interdisciplinary {{Applied Mathematics}}},
  volume = {18},
  publisher = {Springer New York},
  address = {New York, NY},
  doi = {10.1007/b98869},
  urldate = {2026-05-06},
  copyright = {http://www.springer.com/tdm},
  isbn = {978-0-387-95228-4 978-0-387-22438-1},
}

@article{trottnerConcentrationAnalysisMultivariate2023,
  title = {Concentration Analysis of Multivariate Elliptic Diffusions},
  author = {Trottner, Lukas and {Aeckerle-Willems}, Cathrine and Strauch, Claudia},
  year = {2023},
  journal = {Journal of Machine Learning Research},
  volume = {24},
  number = {106},
  pages = {1--38},
  doi = {10.5555/3648699.3648805},
}

@article{aeckerle-willemsConcentrationScalarErgodic2021,
  title = {Concentration of Scalar Ergodic Diffusions and Some Statistical Implications},
  author = {{Aeckerle-Willems}, Cathrine and Strauch, Claudia},
  year = {2021},
  journal = {Annales de l'Institut Henri Poincar{\'e}, Probabilit{\'e}s et Statistiques},
  volume = {57},
  number = {4},
  pages = {1857--1887},
  doi = {10.1214/20-AIHP1144},
}

@article{dalalyanAsymptoticStatisticalEquivalence2007,
  title = {Asymptotic Statistical Equivalence for Ergodic Diffusions: The Multidimensional Case},
  author = {Dalalyan, Arnak and Rei{\ss}, Markus},
  year = {2007},
  journal = {Probability Theory and Related Fields},
  volume = {137},
  pages = {25--47},
  doi = {10.1007/s00440-006-0502-7},
}

@article{kutoyantsParameterEstimationHidden2019,
  title = {On Parameter Estimation of Hidden Ergodic {{Ornstein-Uhlenbeck}} Process},
  author = {Kutoyants, Yury A.},
  year = {2019},
  journal = {Electronic Journal of Statistics},
  volume = {13},
  number = {2},
  issn = {1935-7524},
  doi = {10.1214/19-EJS1631},
}

@article{fitzhughImpulsesPhysiologicalStates1961,
  title = {Impulses and {{Physiological States}} in {{Theoretical Models}} of {{Nerve Membrane}}},
  author = {FitzHugh, Richard},
  year = {1961},
  journal = {Biophysical Journal},
  volume = {1},
  number = {6},
  pages = {445--466},
  issn = {00063495},
  doi = {10.1016/S0006-3495(61)86902-6},
}

@article{lennonPrinciplesSeedBanks2021,
  title = {Principles of Seed Banks and the Emergence of Complexity from Dormancy},
  author = {Lennon, Jay T. and Den Hollander, Frank and {Wilke-Berenguer}, Maite and Blath, Jochen},
  year = {2021},
  journal = {Nature Communications},
  volume = {12},
  number = {1},
  pages = {4807},
  issn = {2041-1723},
  doi = {10.1038/s41467-021-24733-1},
}

@article{Altmeyer2022,
author = {Altmeyer, Randolf and Bretschneider, Till and Jan\'{a}k, Josef and Rei\ss{}, Markus},
title = {Parameter Estimation in an SPDE Model for Cell Repolarization},
journal = {SIAM/ASA Journal on Uncertainty Quantification},
volume = {10},
number = {1},
pages = {179-199},
year = {2022},
doi = {10.1137/20M1373347},
}

@article{budhirajaAsymptoticStabilityErgodicity2003,
  title = {Asymptotic Stability, Ergodicity and Other Asymptotic Properties of the Nonlinear Filter},
  author = {Budhiraja, A},
  year = {2003},
  journal = {Annales de l'Institut Henri Poincare (B) Probability and Statistics},
  volume = {39},
  number = {6},
  pages = {919--941},
  issn = {02460203},
  doi = {10.1016/S0246-0203(03)00022-0},
}

@book{cappeInferenceHiddenMarkov2005,
  title = {Inference in {{Hidden Markov Models}}},
  author = {Capp{\'e}, Olivier and Moulines, Eric and Ryd{\'e}n, Tobias},
  year = {2005},
  series = {Springer {{Series}} in {{Statistics}}},
  publisher = {Springer New York},
  address = {New York, NY},
  doi = {10.1007/0-387-28982-8},
  urldate = {2026-05-06},
  copyright = {http://www.springer.com/tdm},
  isbn = {978-0-387-40264-2 978-0-387-28982-3},
}

@article{demboParameterEstimationPartially1986,
  title = {Parameter Estimation of Partially Observed Continuous Time Stochastic Processes via the {{EM}} Algorithm},
  author = {Dembo, A. and Zeitouni, O.},
  year = {1986},
  journal = {Stochastic Processes and their Applications},
  volume = {23},
  number = {1},
  pages = {91--113},
  issn = {03044149},
  doi = {10.1016/0304-4149(86)90018-9},
}

@article{elliottExactFiniteDimensionalFilters1997,
  title = {Exact {{Finite-Dimensional Filters}} for {{Maximum Likelihood Parameter Estimation}} of {{Continuous-time Linear Gaussian Systems}}},
  author = {Elliott, Robert J. and Krishnamurthy, Vikram},
  year = {1997},
  journal = {SIAM Journal on Control and Optimization},
  volume = {35},
  number = {6},
  pages = {1908--1923},
  issn = {0363-0129, 1095-7138},
  doi = {10.1137/S036301299529255X},
}

@article{mongilloOnlineLearningHidden2008,
  title = {Online {{Learning}} with {{Hidden Markov Models}}},
  author = {Mongillo, Gianluigi and Deneve, Sophie},
  year = {2008},
  journal = {Neural Computation},
  volume = {20},
  number = {7},
  pages = {1706--1716},
  issn = {0899-7667, 1530-888X},
  doi = {10.1162/neco.2008.10-06-351},
}

@article{papavasiliouParameterEstimationAsymptotic2006,
  title = {Parameter Estimation and Asymptotic Stability in Stochastic Filtering},
  author = {Papavasiliou, Anastasia},
  year = {2006},
  journal = {Stochastic Processes and their Applications},
  volume = {116},
  number = {7},
  pages = {1048--1065},
  issn = {03044149},
  doi = {10.1016/j.spa.2006.01.002},
}

@article{moura1986,
  title = {Identification and filtering--optimal recursive maximum likelihood approach},
  author = {Moura, Jos{\'e} M. F. and Mitter, S. K.},
  year = {1986},
  journal = {Laboratory for Information and Decision Systems, Massachusetts Institute of Technology},
}

@article{suraceOnlineMaximumLikelihoodEstimation2019,
  title = {Online {{Maximum-Likelihood Estimation}} of the {{Parameters}} of {{Partially Observed Diffusion Processes}}},
  author = {Surace, Simone Carlo and Pfister, Jean-Pascal},
  year = {2019},
  journal = {IEEE Transactions on Automatic Control},
  volume = {64},
  number = {7},
  pages = {2814--2829},
  issn = {0018-9286, 1558-2523, 2334-3303},
  doi = {10.1109/TAC.2018.2880404},
}

@book{LiptserShiryaev2001,
  author    = {Liptser, R. S. and Shiryaev, A. N.},
  title     = {Statistics of Random Processes I: General Theory},
  edition   = {2nd},
  publisher = {Springer-Verlag},
  year      = {2001},
  address   = {Berlin, Heidelberg},
  isbn      = {978-3-642-62890-0}
}

@article{KUTOYANTS2019248,
title = {On parameter estimation of the hidden Ornstein–Uhlenbeck process},
journal = {Journal of Multivariate Analysis},
volume = {169},
pages = {248-263},
year = {2019},
issn = {0047-259X},
doi = {10.1016/j.jmva.2018.09.008},
author = {Yury A. Kutoyants},
}

@book{IbragimovHasminskii1981,
  author    = {Ibragimov, I. A. and Has'minskii, R. Z.},
  title     = {Statistical Estimation: Asymptotic Theory},
  series    = {Applications of Mathematics},
  volume    = {16},
  publisher = {Springer-Verlag},
  address   = {New York},
  year      = {1981},
  isbn      = {0387905235},
  note      = {Translated by Samuel Kotz},
  mrnumber  = {0620321}
}

@article{KALLIANPUR1991284,
title = {Parameter estimation in linear filtering},
journal = {Journal of Multivariate Analysis},
volume = {39},
number = {2},
pages = {284-304},
year = {1991},
issn = {0047-259X},
doi = {10.1016/0047-259X(91)90102-8},
author = {G Kallianpur and R.S Selukar},
}

@book{kutoyants2004statistical,
  title     = {Statistical Inference for Ergodic Diffusion Processes},
  author    = {Kutoyants, Yuri A.},
  series    = {Springer Series in Statistics},
  year      = {2004},
  publisher = {Springer},
  address   = {New York},
  isbn      = {0-387-40225-6},
  doi       = {10.1007/978-0-387-40226-1}
}

@book{halmos1978bounded,
  title={Bounded Integral Operators on L2 Spaces},
  author={Halmos, Paul Richard and Sunder, V.S.},
  year={1978},
  publisher={Springer},
  series={Ergebnisse der Mathematik und ihrer Grenzgebiete},
  doi={10.1007/978-3-642-67016-9}
}

@article{Schur1911,
author = {Schur, J.},
journal = {Journal für die reine und angewandte Mathematik},
pages = {1-28},
title = {Bemerkungen zur Theorie der beschränkten Bilinearformen mit unendlich vielen Veränderlichen.},
volume = {140},
year = {1911},
}

@book{Friz_Victoir_2010, place={Cambridge}, series={Cambridge Studies in Advanced Mathematics}, title={Multidimensional Stochastic Processes as Rough Paths: Theory and Applications}, publisher={Cambridge University Press}, author={Friz, Peter K. and Victoir, Nicolas B.}, year={2010}, collection={Cambridge Studies in Advanced Mathematics}}

@book{van-der-Vaart1996,
  author = {van der Vaart, A. and Wellner, J.A.},
  title = {Weak Convergence and Empirical Processes. With Applications to Statistics},
  year = {1996},
  publisher = {Springer New York},
}

@book{Teschl2012-ODE,
  author    = {Gerald Teschl},
  title     = {Ordinary Differential Equations and Dynamical Systems},
  series    = {Graduate Studies in Mathematics},
  volume    = {140},
  publisher = {American Mathematical Society},
  address   = {Providence, RI},
  year      = {2012},
  doi       = {10.1090/gsm/140},
  isbn      = {978-0-8218-8328-0}
}

@book{khalil2002nonlinear,
  title     = {Nonlinear Systems},
  author    = {Khalil, Hassan K.},
  edition   = {3},
  year      = {2002},
  publisher = {Prentice Hall},
  address   = {Upper Saddle River, NJ}
}

@book{Kutoyants1984,
  author    = {Kutoyants, Yu. A.},
  title     = {Parameter Estimation for Stochastic Processes},
  year      = {1984},
  publisher = {Heldermann Verlag},
  address   = {Berlin},
  series    = {Research and Exposition in Mathematics},
  volume    = {6},
  isbn      = {3-88538-206-7},
  note      = {Translated and edited by B. L. S. Prakasa Rao}
}

@article{Chigansky_2008,
   title={Maximum likelihood estimator for hidden Markov models in continuous time},
   volume={12},
   ISSN={1572-9311},
   DOI={10.1007/s11203-008-9025-4},
   number={2},
   journal={Statistical Inference for Stochastic Processes},
   publisher={Springer Science and Business Media LLC},
   author={Chigansky, Pavel},
   year={2008},
   month={July},
   pages={139--163}
}

@Article{Brouste_2010,
journal={Statistical Inference for Stochastic Processes},
author={Alexandre Brouste and Marina Kleptsyna},
title={Asymptotic properties of MLE for partially observed fractional diffusion system},
year={2010},
month={April},
pages={1-13},
volume={13},
number={1},
doi={10.1007/s11203-009-9035-x}
}

@book{Da-Prato_Zabczyk_2014, 
place={Cambridge}, 
edition={2}, 
series={Encyclopedia of Mathematics and its Applications}, 
title={Stochastic Equations in Infinite Dimensions}, 
publisher={Cambridge University Press}, 
author={Da Prato, Giuseppe and Zabczyk, Jerzy}, 
year={2014}, 
collection={Encyclopedia of Mathematics and its Applications},
}

@book{Bain2009,
  title = {Fundamentals of Stochastic Filtering},
  author = {Bain, Alan and Crisan, Dan},
  year = {2009},
  publisher = {Springer New York},
  series = {Stochastic Modelling and Applied Probability},
  isbn = {9780387768960},
}

@book{LeCam1986,
  author    = {Le Cam, Lucien},
  title     = {Asymptotic Methods in Statistical Decision Theory},
  series    = {Springer Series in Statistics},
  publisher = {Springer},
  address   = {New York, NY},
  year      = {1986},
  doi       = {10.1007/978-1-4612-4946-7},
  isbn      = {978-0-387-96307-5},
  pages     = {XXVI, 742},
  edition   = {1}
}

@article{10.1093/biomet/12.1-2.134,
    author = {Isserlis, L.},
    title = {On a Formula for the Product-Moment Coefficient of Any Order of a Normal Frequency Distribution in Any Number of Variables},
    journal = {Biometrika},
    volume = {12},
    number = {1-2},
    pages = {134-139},
    year = {1918},
    month = {11},
    issn = {0006-3444},
    doi = {10.1093/biomet/12.1-2.134},
}

@article{bishop2017,
author = {Bishop, Adrian N. and Del Moral, Pierre},
title = {On the Stability of Kalman--Bucy Diffusion Processes},
journal = {SIAM Journal on Control and Optimization},
volume = {55},
number = {6},
pages = {4015-4047},
year = {2017},
doi = {10.1137/16M1102707},
}

\end{document}